\documentclass[11pt,reqno]{amsproc}
\allowdisplaybreaks
\numberwithin{equation}{section}

\usepackage{empheq}
\usepackage{amssymb}
\usepackage{appendix} 

\usepackage{enumerate}
\usepackage{enumitem}
\usepackage{fullpage}
\usepackage{textcomp}
\usepackage{lettrine}
\usepackage{float} 
\usepackage{graphicx}
\usepackage{subfigure}
\usepackage{morefloats}

\usepackage[font=small,labelfont=bf,
   justification=justified,
   format=plain]{caption} % 'format=plain' avoids hanging indentation

\usepackage[debug=false, colorlinks=true, pdfstartview=FitV,
linkcolor=blue, citecolor=blue, urlcolor=blue]{hyperref}
\usepackage[semicolon,square,authoryear,sort]{natbib}

\usepackage[doipre={DOI:~}]{uri}

\usepackage{color}
\usepackage{colortbl}
\usepackage[most]{tcolorbox}

\newtcbox{\mymath}[1][]{%
    nobeforeafter, math upper, tcbox raise base,
    enhanced, colframe=blue!30!black,
    colback=blue!30, boxrule=1pt,
    #1}

\newlength{\drop}
\definecolor{amethyst}{rgb}{0.6, 0.4, 0.8}
\definecolor{burgundy}{rgb}{0.5, 0.0, 0.13}

\usepackage{longtable} 
\usepackage{multirow} 
\usepackage{rotating} 
\usepackage{bigstrut} 
\usepackage{hhline} 

\usepackage{amsthm}

\newtheoremstyle{remboldstyle}
  {}{}{}{}{\bfseries}{.}{.5em}{{\thmname{#1 }}{\thmnumber{#2}}{\thmnote{ (#3)}}}
\theoremstyle{remboldstyle}

\newtheorem{theorem}{Theorem}
\newtheorem{lemma}[theorem]{Lemma}

\newtheorem{remark}{Remark}

\title{\textbf{A Machine Learning--Enhanced Hopf--Cole Formulation \\ 
for Nonlinear Gas Flow in Porous Media}}

\author{\textbf{Venkat S.~Maduri} and \textbf{Kalyana B.~Nakshatrala} \\
  {\small Department of Civil and Environmental Engineering \\
  University of Houston, Houston, Texas 77204, USA.}\\
  {\small \textbf{Correspondence to}: knakshatrala@uh.edu, 
  +1-713-743-4418}}

\keywords{deep least-squares method; 
    Klinkenberg effect; 
    Hopf--Cole transformation;
    inverse modeling; 
    nonlinear gas flow through porous media;
    convergence analysis}

\begin{document}

%===========================;
%  Title page of the paper  ;
%===========================;
\begin{titlepage}
  \drop=0.1\textheight
  \centering
  \vspace*{\baselineskip}
  \rule{\textwidth}{1.6pt}\vspace*{-\baselineskip}\vspace*{2pt}
  \rule{\textwidth}{0.4pt}\\[\baselineskip]
       {\Large \textbf{\color{burgundy}
       A Machine Learning--Enhanced Hopf--Cole Formulation
       \\[0.3\baselineskip]for Nonlinear Gas Flow in Porous Media}}\\[0.3\baselineskip]
       \rule{\textwidth}{0.4pt}\vspace*{-\baselineskip}\vspace{3.2pt}
       \rule{\textwidth}{1.6pt}\\[\baselineskip]
       \scshape
       An e-print of this paper is available on arXiv. \par
       \vspace*{1\baselineskip}
       Authored by \\[0.5\baselineskip]
       
       {\Large Venkat S.~Maduri\par}
       {\itshape Graduate Student, Department of Civil \& Environmental Engineering \\
       University of Houston, Houston, Texas 77204.}\\[0.25\baselineskip]
       
       {\Large K.~B.~Nakshatrala\par}
       {\itshape Department of Civil \& Environmental Engineering \\
       University of Houston, Houston, Texas 77204. \\
       \textbf{phone:} +1-713-743-4418, \textbf{e-mail:} knakshatrala@uh.edu \\
       \textbf{website:} http://www.cive.uh.edu/faculty/nakshatrala}\\[0.25\baselineskip]
       
    %----------------------;
    %  Graphical abstract  ;
    %----------------------;
       \begin{figure*}[ht]
            \centering
            \includegraphics[width=0.75\linewidth]{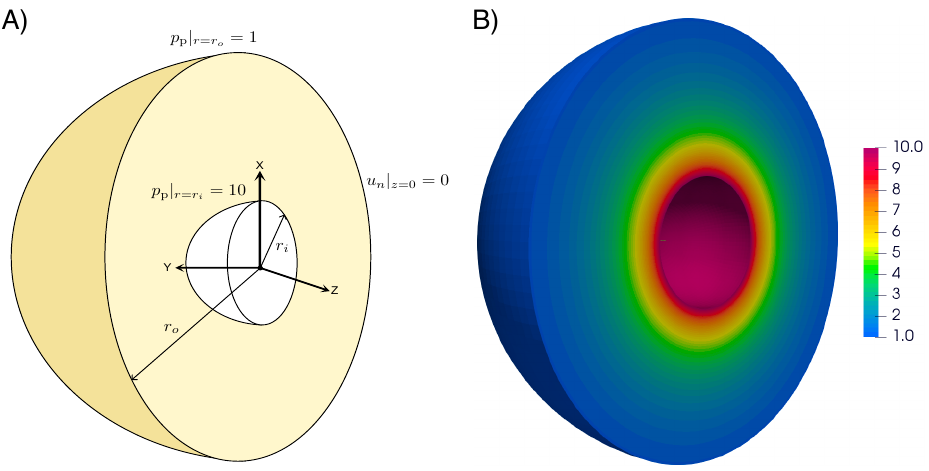}
            \captionsetup{labelformat=empty}
            \caption{A) This figure illustrates the three-dimensional concentric spherical porous domain  with inner radius $r_i$ and outer radius $r_o$ with prescribed pressure on the spherical boundaries $p_p\vert_{r=r_i}=10$ and $p_p\vert_{r=r_o}=1$ together with a symmetry condition on the cut plane $z=0$ enforcing zero normal flow. B) Shows the resulting pressure profile across the domain demonstrating that the proposed framework delivers stable high-fidelity predictions.}
        \end{figure*}
       \vfill
       {\scshape 2026} \\
       {\small Computational \& Applied Mechanics Laboratory} \par
\end{titlepage}

%=========================;
%  Abstract of the paper  ;
%=========================;
\begin{abstract} 
    Accurate modeling of gas flow through porous media is critical for many technological applications, including reservoir performance prediction, carbon capture and sequestration, and fuel cells and batteries. However, such modeling remains challenging due to strong nonlinear behavior and uncertainty in model parameters. In particular, gas slippage effects described by the Klinkenberg model introduce pressure-dependent permeability, which complicates numerical simulation and obscures deviations from classical Darcy flow behavior. To address these challenges, we present an integrated modeling framework for gas transport in porous media that combines a Klinkenberg-enhanced constitutive relation, Hopf--Cole--transformed mixed-form linear governing equations, a shared-trunk neural network architecture, and a Deep Least-Squares (DeepLS) solver. The Hopf--Cole transformation reformulates the original nonlinear flow equations into an equivalent linear system closely related to the Darcy model, while the mixed formulation, together with a shared-trunk neural architecture, enables simultaneous and accurate prediction of both pressure and velocity fields. A rigorous convergence analysis is performed both theoretically and numerically, establishing the stability and convergence properties of the proposed solver. Importantly, the proposed framework also naturally facilitates inverse modeling of pressure-dependent permeability and slippage parameters from limited or indirect observations, enabling efficient estimation of flow properties that are difficult to measure experimentally. Numerical results demonstrate accurate recovery of flow dynamics and parameters across a wide range of pressure regimes, highlighting the framework’s robustness, accuracy, and computational efficiency for gas transport modeling and inversion in tight formations.
\end{abstract}

\maketitle

    %==================================;
    %  Include all the sections below  ;
    %==================================;
    \setcounter{figure}{0}   

    \section*{Acronyms and Abbreviations}

\begin{center}
   \begin{tabular}{|l|l|} \hline 
       DeepLS & Deep Least-Squares \\ 
       DRM & Deep Ritz Method \\
       FEM & Finite Element Method \\
       PDE & Partial Differential Equation \\
       PINNs & Physics-Informed Neural Networks \\\hline  
   \end{tabular}
\end{center}

    \clearpage 
    %*********************************************;
%                                             ;
%  NAME                                       ;
%    S1_Klinkenberg_Intro.tex                 ;
%                                             ;
%*********************************************;
\section{INTRODUCTION AND MOTIVATION}
\label{Sec:S1_Klinkenberg_Intro}

\lettrine[findent=2pt]{\fbox{\textbf{T}}}{he} flow of gas through porous media is fundamental to a wide range of industrial and environmental processes \citep{bear2013dynamics}. Key examples include:
\begin{enumerate}
    \item \textbf{Energy sector:} Gas transport controls production in both conventional and unconventional reservoirs, including tight sands and shale formations, where accurate modeling is essential for efficient resource recovery \citep{javadpour2009nanopore, florence2007improved}.
    \item \textbf{Chemical and process industries:} The performance of porous catalysts depends critically on controlled gas transport, which governs reactant accessibility and overall reaction efficiency \citep{froment2010chemical}.
    \item \textbf{Environmental engineering:} Gas flow behavior influences soil gas monitoring, landfill gas recovery, and carbon capture and sequestration, making an understanding of subsurface gas migration vital for environmental assessment and mitigation \citep{Heid1950}.
    \item \textbf{Advanced technologies:} Gas transport through porous structures underlies the operation of fuel cells, filtration systems, insulation materials, and emerging micro- and nanofluidic devices, highlighting its cross-disciplinary relevance \citep{HoWebb2006}.
\end{enumerate}

However, direct measurement of subsurface or internal flow behavior is often impractical or even impossible—particularly for gas transport—rendering experimental observations alone insufficient for fully understanding or predicting system dynamics \citep{bear2013dynamics,scheidegger1974physics,dullien1992porous}. In such circumstances, mathematical models become indispensable for estimating transport properties, evaluating system performance, simulating migration processes, and enabling efficient system design \citep{whitaker1999method, lake2014petroleum, blunt2017multiphase}. Robust modeling frameworks help bridge the gap between laboratory-scale measurements and field-scale conditions by providing a systematic means to extrapolate controlled experimental data to complex natural environments \citep{freeze1979groundwater, bear2013dynamics, bear2010modeling}.

Numerous modeling strategies have been developed to describe gas transport in porous media across different length scales and flow regimes \citep{bear2013dynamics,whitaker1986flow}. Classical continuum approaches---most notably the famous Darcy model---are valid for macroporous systems at sufficiently high pressures, corresponding to Knudsen numbers \(\mathrm{Kn} = \lambda / L < 10^{-3}\), where \(\lambda\) is the gas mean free path and \(L\) is a characteristic pore dimension. Under these conditions, the mean free path is much smaller than the pore size and the no-slip boundary condition applies \citep{bear2013dynamics}. As pressure decreases and the Knudsen number falls within the slip-flow regime (\(10^{-3} \lesssim \mathrm{Kn} \lesssim 10^{-1}\)), molecule--wall interactions induce ``rarefaction effects,'' including velocity slip (a nonzero tangential velocity at the wall) and the breakdown of classical continuum assumptions \citep{beskok1999report}. Experiments show that, in this flow regime, the Darcy model underpredicts gas fluxes, yielding an apparent permeability greater than the intrinsic value \citep{javadpour2009nanopore}. To capture this non-Darcian behavior, models generally fall into two categories: non-continuum approaches that resolve microscale transport at higher \(\mathrm{Kn}\), and modified continuum frameworks that incorporate slip corrections while retaining macroscopic efficiency \citep{javadpour2009nanopore}.

Within this family of continuum models, \citet{klinkenberg1941} proposed a widely used empirical correction that links apparent gas permeability to pressure. This modification enables continuum formulations to reproduce gas-transport behavior in low-pressure and tight-formation regimes, bridging idealized theory and experimental observations while avoiding costly pore-scale simulations. Incorporating the Klinkenberg effect makes the governing equations strongly nonlinear and analytically intractable except in highly idealized settings. As a result, standard numerical methods---finite difference, finite volume, and finite element schemes---must employ iterative solvers (e.g., Newton–Raphson method) to handle the nonlinearity. These approaches often suffer from slow convergence and stability limitations due to the tight coupling between pressure and flow properties \citep{civan2010damage}.

The field of gas transport in porous media would benefit from a modeling framework that satisfies the following design goals:
\begin{enumerate}[label=\textbf{G\arabic*.}]
    \item Handle or mitigate convergence and stability issues inherent in nonlinear mathematical models.
    \item Achieve high accuracy in predicting the velocity field, which, in many applications, is as important as, or more important than, the pressure field.
    \item Exhibit strong numerical stability.
\end{enumerate}
The central aim of this paper is to present such a modeling framework. To this end, we make several methodological choices and provide a rationale for each.

We begin by addressing design goal \textbf{G1}. Although originally developed to obtain analytical solutions for partial differential equations (PDEs) arising in heat conduction, the Hopf--Cole transformation offers an elegant way to overcome the computational challenges mentioned above: it introduces a suitable change of variables, converting a class of nonlinear partial differential equations into linear ones \citep{Hopf1950,Cole1951,Vadasz2010}. In the realm of flow through porous media, a recent work by \citet{maduri2025flow} has demonstrated the efficacy of this transformation for models involving pressure-dependent fluid viscosity. Furthermore, they have shown that the transformation not only facilitates reliable numerical solutions but also simplifies mathematical analysis---including the establishment of qualitative properties such as maximum and comparison principles and the proof of solution uniqueness. Building on these developments, the present work extends the Hopf--Cole transformation to the Klinkenberg model, where nonlinearity arises from pressure-dependent permeability, thereby reformulating the governing flow equations into a more tractable linear form. 

To address goal \textbf{G2}, which emphasizes accurate prediction of the velocity field, we adopt a mixed formulation implemented through a shared-trunk neural network architecture. Single-field formulations that solve for pressure alone and subsequently recover velocity by differentiation are well known to produce inaccurate or noisy velocity fields, particularly in heterogeneous porous media, where differentiation amplifies numerical errors and degrades local flux accuracy \citep{raviart1977mixed, brezzi1991mixed}. For this reason, mixed formulations that solve for pressure and velocity simultaneously have long been recognized as essential for obtaining physically consistent and accurate velocity fields \citep{boffi2013mixed}. However, when mixed formulations are implemented using separate neural networks for pressure and velocity, the resulting models often learn inconsistent latent representations, leading to reduced accuracy and loss of coupling between the predicted fields \citep{caruana1997multitask, ruder2017overview}. A shared-trunk architecture alleviates this issue by learning a common representation of the underlying flow physics while allowing task-specific output heads to specialize for pressure and velocity, thereby enforcing implicit coupling and improving the fidelity of the predicted velocity field \citep{raissi2019physics,Karniadakis2021PIML}.

To address goal \textbf{G3}, which calls for strong numerical stability, we adopt a \emph{Deep Least-Squares} (DeepLS) formulation applied to the governing equations transformed into linear form via the Hopf–Cole transformation. The unknown solution fields, represented by neural networks, are determined by minimizing a functional constructed directly from the governing equations and boundary conditions \citep{Cai2020DeepLS}. This formulation yields a nonnegative, symmetric, and positive-definite objective whose minimizer coincides with the solution of the underlying PDE system \citep{Bersetche2023FOSLS}. When combined with the Hopf–Cole transformation, the method operates on linear Darcy-type equations rather than the original quasilinear system, resulting in a well-conditioned optimization landscape and a robust, meshfree solution strategy.

This stability stands in contrast to alternative physics-based deep learning approaches. The \emph{Deep Ritz Method} requires the existence of a classical variational formulation \citep{WE2018deep, samaniego2020energy, manav2024phase}; while such a formulation exists for Darcy flow in mixed form through classical mixed Galerkin methods, the resulting stiffness matrix is a saddle-point system rather than positive definite, which poses challenges for numerical stability and solver robustness. \emph{Physics-Informed Neural Networks} (PINNs), on the other hand, operate directly on strong-form PDE residuals and rely on collocation-based enforcement of the governing equations. Achieving stable training in PINNs often requires adaptive strategies for selecting collocation points and balancing loss terms, particularly in stiff or heterogeneous problems \citep{raissi2019physics,Karniadakis2021PIML}. By contrast, DeepLS leverages least-squares formulations that inherently lead to symmetric and positive-definite operators, properties that are well known to ensure stability and robust performance of both linear and optimization solvers \citep{Cai2020DeepLS,Bersetche2023FOSLS}. These attributes make DeepLS particularly well suited for stable and reliable modeling of gas transport in porous media.

Thus, the proposed integrated modeling framework---combining a Klinkenberg-enhanced constitutive relation, Hopf--Cole--transformed linear PDEs in mixed form, a shared-trunk neural architecture, and a DeepLS-based solver---offers a robust and versatile approach for accurately modeling complex gas-transport phenomena in porous media. See \textbf{Fig.~\ref{Fig:Klinkenberg_Concept_figure}}.

The remainder of this paper is organized as follows. Section~\ref{Sec:S2_Klinkenberg_GE} formulates the boundary value problem for the Klinkenberg model. Section~\ref{Sec:S3_Klinkenberg_Modeling} introduces the integrated modeling framework, and Section~\ref{Sec:S4_Klinkenberg_Convergence} establishes its convergence properties. Section~\ref{Sec:S5_Klinkenberg_NR} presents numerical results that verify the accuracy and robustness of the proposed approach. In Section~\ref{Sec:S6_Klinkenberg_Betti}, a reciprocal relation is derived and utilized as an a posteriori error estimator to quantify solution accuracy. Finally, Section~\ref{Sec:S7_Klinkenberg_Closure} summarizes the principal findings and contributions of the study.

%----------------------------;
%  Figure 1: Concept figure  ;
% ---------------------------;
\begin{figure}
    \centering
    \includegraphics[width=0.85\linewidth]{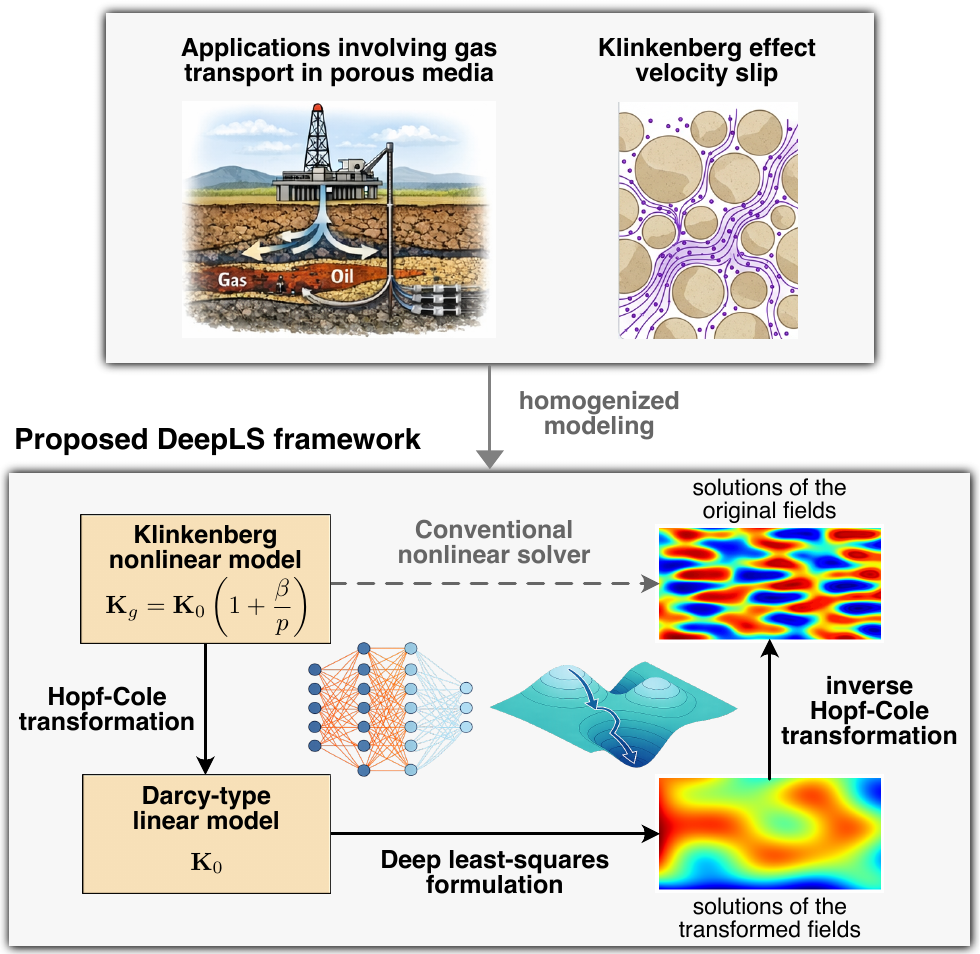}
    \caption{Conceptual framework and workflow of the proposed DeepLS framework. The nonlinear governing equations are reformulated via the Hopf–Cole transformation, yielding a linear system expressed in terms of a transformed pressure variable. A least-squares energy functional is then constructed for the resulting linear Darcy problem. The functional is minimized using a neural network–based deep least-squares formulation to compute the transformed pressure and velocity fields. Finally, the physical gas pressure is recovered through the inverse Hopf–Cole transformation, completing the solution procedure.}
    \label{Fig:Klinkenberg_Concept_figure}
\end{figure}

    %*********************************************;
%                                             ;
%  NAME                                       ;
%    S2_Klinkenberg_GE.tex                    ;
%                                             ;
%*********************************************;
\section{MODEL INCORPORATING THE KLINKENBERG EFFECT}
\label{Sec:S2_Klinkenberg_GE}

Consider the flow of a low-viscosity gas through a \emph{rigid} porous medium. In our treatment of the flow, we do not resolve the motion of the gas at the pore scale. Instead, we work with a homogenized description at the macroscale, also referred to as the Darcy or plug–flow scale~\citep{bear2013dynamics}. Accordingly, let 
\(\Omega \subset \mathbb{R}^{nd}\) denote the macroscopic domain occupied by the porous medium, comprising both the solid matrix and the pore space; $nd$ represents the number of spatial dimensions. The boundary of the domain is defined by
%----------------------;
%  Equation: Boundary  ;
%----------------------;
\begin{align}
    \partial \Omega := \overline{\Omega} \setminus \Omega
\end{align}
with \(\overline{\Omega}\) denoting the closure of \(\Omega\)~\citep{evanspartial}. 

The domain is assumed to be bounded and to have a Lipschitz boundary.\footnote{This assumption is imposed for technical reasons and is used in the convergence analysis; see Section~\ref{Sec:S4_Klinkenberg_Convergence}.}
By bounded, we mean that the domain is contained in a finite region of $\mathbb{R}^{nd}$. By Lipschitz, we mean that for each boundary point there exists a neighborhood and a coordinate system in which the set is locally given by the epigraph---the region lying above the graph---of a Lipschitz continuous function \citep{ziemer2012weakly}. This condition permits non-smooth features such as corners and edges, while excluding pathological irregularities such as cusps or fractal boundaries, and it ensures the validity of standard analytical results for Sobolev spaces and partial differential equations \citep{adams2003sobolev}.

A spatial point is written as \(\mathbf{x} \in \overline{\Omega}\). The operators \(\mathrm{grad}[\cdot]\) and \(\mathrm{div}[\cdot]\) denote, respectively, the spatial gradient and divergence. The vector \(\widehat{\mathbf{n}}(\mathbf{x})\) 
denotes the outward-pointing unit normal on \(\partial \Omega\). The Darcy (or discharge) velocity field is denoted by \(\mathbf{u}(\mathbf{x})\), and the fluid\footnote{Throughout this paper, the term \emph{fluid} refers exclusively to a gas. Since liquids are not considered, there should be no confusion with the usage.} pressure is represented by \(p(\mathbf{x})\). The intrinsic density and dynamic viscosity of the gas are denoted by \(\gamma\) and \(\mu\), respectively, while \(\phi(\mathbf{x})\) denotes the local porosity, i.e., the volume fraction of pore space. Within a macroscopic (continuum-scale) description, one must specify the permeability field, which characterizes the effective ability of the porous medium to transmit fluid.

However, gas flow through a porous medium differs markedly from liquid flow. In particular, when a gas flows at sufficiently low pressures, slip at the pore walls becomes significant. Under these conditions, the classical no-slip boundary condition---typically valid for continuum-scale flows such as Stokes flow---no longer applies. This phenomenon, known as the \emph{Klinkenberg slip effect}, leads to a distinction between the intrinsic (or absolute) permeability of the medium and the apparent gas permeability. In the Klinkenberg model, the apparent gas permeability (often referred to simply as the gas permeability) is defined as\footnote{See Remark \ref{Remark:Klinkenberg_About_writing_model}.}
%-------------------------------;
%  Equation: Klinkenberg model  ;
%-------------------------------;
\begin{align}
    \label{Eqn:Klinkenberg_Basic}
    \mathbf{K}_g(\mathbf{x}) 
    &= \widehat{\mathbf{K}}_g\big(\mathbf{x},p(\mathbf{x})\big) 
    = \mathbf{K}_0(\mathbf{x}) 
    \left( 1 + \frac{\beta \, p_{\mathrm{atm}}}{p(\mathbf{x})} \right)
\end{align}
where $\widehat{\mathbf{K}}_g\big(\mathbf{x},p(\mathbf{x})\big)$ emphasizes the dependence of the gas permeability on both spatial position and gas pressure; $\mathbf{K}_0(\mathbf{x})$ denotes the intrinsic permeability field; $\beta \geq 0$ is the Klinkenberg parameter, which is dimensionless; and $p_{\mathrm{atm}}$ represents the atmospheric pressure. The intrinsic permeability field $\mathbf{K}_0(\mathbf{x})$ characterizes the influence of the pore geometry on flow within the porous medium. In the limit $p \to \infty$, the apparent permeability $\mathbf{K}_g(\mathbf{x})$ converges to the intrinsic permeability $\mathbf{K}_0(\mathbf{x})$, thereby recovering the standard Darcy model.

From physical considerations, the tensor $\mathbf{K}_0(\mathbf{x})$ is symmetric and strictly positive definite, and therefore invertible. Since the (absolute) pressure satisfies $p > 0$ and $\beta \geq 0$, and both are scalar quantities, the gas permeability tensor $\mathbf{K}_g(\mathbf{x})$ is likewise symmetric and positive definite. For mathematical analysis, we impose the stronger assumptions of uniform ellipticity and boundedness. Specifically, there exist constants $k_{\mathrm{min}}, k_{\mathrm{max}}$ with $0 < k_{\mathrm{min}} \leq k_{\mathrm{max}}$ such that, for every $\mathbf{x} \in \Omega$ and for every nonzero vector $\boldsymbol{\zeta} \in \mathbb{R}^{nd}$ we have 
%-------------------------;
%  Equation: Ellipticity  ;
%-------------------------;
\begin{align}
    \label{Eqn:Klinkenberg_Ellipticity}
    k_{\mathrm{min}} \, \boldsymbol{\zeta}\bullet\boldsymbol{\zeta}
    \leq \boldsymbol{\zeta}\bullet \mathbf{K}_0(\mathbf{x})\boldsymbol{\zeta}
    \leq k_{\mathrm{max}} \, \boldsymbol{\zeta}\bullet\boldsymbol{\zeta}
\end{align}

For an isotropic and spatially homogeneous porous medium, the intrinsic permeability reduces to the scalar form
%---------------------------------;
%  Equation: Scalar permeability  ;
%---------------------------------;
\begin{align}
    \label{Eqn:Klinkenberg_scalar_permeability}
      \mathbf{K}_0(\mathbf{x}) &= k_0\,\mathbf{I}
\end{align}
where $k_0 > 0$ is a scalar permeability constant, and $\mathbf{I}$ is the second–order identity tensor.

With regard to the boundary conditions, we split the boundary $\partial\Omega$ into two parts: $\Gamma_{u}$ and $\Gamma_{p}$. $\Gamma_{u}$ designates that portion of the boundary on which the normal component of the velocity is prescribed, while $\Gamma_{p}$ is the portion on which pressure is prescribed. For mathematical well-posedness, we require:
%--------------------------------;
%  Equation: Boundary partition  ;
%--------------------------------;
\begin{align}
    \label{Eqn:Klinkenberg_well_posedness}
    \Gamma_{u} \cup \Gamma_{p} = \partial\Omega \quad \text{and} \quad \Gamma_{u} \cap \Gamma_{p} = \emptyset
\end{align}
We assume that $\Gamma_p$ is of positive measure, that is, $\text{meas}(\Gamma_p) > 0$.

The governing equations for gas flow through porous media under the Klinkenberg model take the following form:
%-----------------------------;
%  Equation: Klinkenberg BVP  ;
%-----------------------------;
\begin{subequations}
    \begin{alignat}{2}
        \label{Eqn:Klinkenberg_BoLM}
        &\mu(\mathbf{x})\,\widehat{\mathbf{K}}^{-1}_g\big(\mathbf{x},p(\mathbf{x})\big) \, \mathbf{u}(\mathbf{x}) 
        +  \mathrm{grad}\big[p(\mathbf{x})\big] 
        = \phi(\mathbf{x}) \, \gamma \, \mathbf{b}(\mathbf{x}) 
        \approx \mathbf{0}          
        &&  \qquad \mathrm{in} \; \Omega \\
        \label{Eqn:Klinkenberg_BoM}
        &\mathrm{div}\big[\mathbf{u}(\mathbf{x})\big] 
        = 0
        &&  \qquad \mathrm{in} \; \Omega \\
        \label{Eqn:Klinkenberg_v_BC}
        &\mathbf{u}(\mathbf{x}) \bullet \widehat{\mathbf{n}}(\mathbf{x}) = u_{n}(\mathbf{x}) 
       && \qquad \mathrm{on} \; \Gamma_{u} \\
       \label{Eqn:Klinkenberg_p_BC}
       &p(\mathbf{x}) = p_{\mathrm{p}}(\mathbf{x}) 
       && \qquad \mathrm{on} \; \Gamma_{p} 
    \end{alignat}
\end{subequations}
where $\mathbf{b}(\mathbf{x})$ is the specific body force, $u_{n}(\mathbf{x})$ is the normal component of the (Darcy) velocity prescribed on the boundary, and $p_{\mathrm{p}}(\mathbf{x})$ is the pressure prescribed on the boundary. 

In Eq.~\eqref{Eqn:Klinkenberg_BoLM}, the gas density is generally small, particularly under low-pressure conditions. As a result, the contribution of the body-force term $\phi(\mathbf{x}) \gamma \mathbf{b}(\mathbf{x})$ is negligible compared to the pressure-gradient term, and it is therefore omitted in the subsequent analysis.

The parameter $\beta$ depends strongly on the pore structure of the porous medium and on the type of gas. Table \ref{Table:Klinkenberg_typical_values_of_b} reports typical ranges of $\beta$ for various porous materials; these values primarily reflect differences in pore structure and do not account for the specific gas employed. Larger values of $\beta$ correspond to stronger gas-slippage effects and are particularly significant in tight formations such as shale. Since the gas permeability depends on the pressure, the above boundary-value problem is therefore \emph{nonlinear}.

A few remarks are in order before we present the proposed modeling framework.

%-----------------------------------;
%  Remark: About Boundary Condition ;
%-----------------------------------;
\begin{remark}
    \label{Remark:Kinkenberg_Compatibility}
    When the boundary partition assigns the entire boundary $\partial\Omega$ to the flux
    (i.e., $\Gamma_u = \partial\Omega$), the boundary data $u_n(\mathbf{x}) := \mathbf{u}(\mathbf{x}) \bullet \widehat{\mathbf{n}}(\mathbf{x})$ cannot be prescribed arbitrarily. Since the velocity field is solenoidal in $\Omega$, the net volumetric flow across the boundary must vanish \citep{Schey2005DivGradCurl}. Consequently, the boundary flux must satisfy
    \begin{align}
        \label{Eqn:Kinkenberg_Compatibility_equation}
        \int_{\partial\Omega} u_n(\mathbf{x})\,\mathrm{d}\Gamma = 0 
    \end{align}
    This follows by integrating $\mathrm{div}[\mathbf{u}(\mathbf{x})] = 0$ over $\Omega$
    and invoking Gauss' theorem:
    \begin{align}
        0 \;=\; \int_{\Omega}\text{div}[\mathbf{u}(\mathbf{x})]
        \,\mathrm{d}\Omega \;=\; \int_{\partial\Omega}\mathbf{u}(\mathbf{x})\bullet\widehat{\mathbf{n}}(\mathbf{x})\,\mathrm{d}\Gamma
        \;=\; \int_{\partial\Omega} u_n(\mathbf{x})\,\mathrm{d}\Gamma
    \end{align}
    which coincides with Eq.~\eqref{Eqn:Kinkenberg_Compatibility_equation}.
\end{remark}

%-----------------------------------;
%  Remark: About Klinkenberg model  ;
%-----------------------------------;
\begin{remark}
    \label{Remark:Klinkenberg_About_writing_model}
    In the existing literature, the Klinkenberg model is most commonly written in the scalar form
    \begin{align}
        k_g = k_0 \left( 1 + \frac{\widetilde{\beta}}{p} \right)
    \end{align}
    where the parameter $\widetilde{\beta}$ has dimensions of pressure, and $k_0$ is the scalar intrinsic permeability. In contrast, the formulation adopted in the present work introduces a dimensionless parameter $\beta$. The two parameters are related by
    \begin{align}
        \widetilde{\beta} = \beta p_{\mathrm{atm}}
    \end{align}

    The proposed framework further generalizes the classical Klinkenberg model by allowing the permeability to be both anisotropic and spatially heterogeneous.
\end{remark}

%---------------------------------;
%  Table: Typical values of beta  ;
%---------------------------------;
\begin{table}[h]
    \centering
    \caption{Typical values of the Klinkenberg parameter $\beta$.\label{Table:Klinkenberg_typical_values_of_b}}
    \begin{tabular}{@{}lc@{}} \hline 
        \textbf{Rock type}   & \textbf{Typical \(\beta\)} \\ \hline
        Glass bead packs     & $\approx 0$    \\
        Metal-ceramic foams  & $\approx 0$    \\ 
        Coarse-grained sands & 0--0.5         \\ 
        Carbonates           & 1--10          \\
        Tight sandstone      & 2--20          \\
        Shale                & 10--200        \\\hline 
    \end{tabular}
\end{table} 

    %*********************************************;
%                                             ;
%  NAME                                       ;
%    S3_Klinkenberg_Modeling.tex              ;
%                                             ;
%*********************************************;
\section{PROPOSED MODELING FRAMEWORK}
\label{Sec:S3_Klinkenberg_Modeling}
This section presents an integrated modeling framework that couples the Hopf--Cole transformation with a least-squares energy functional–based deep neural network. The workflow of the proposed framework---referred to as the \textbf{DeepLS framework}---is summarized in the following four steps (see Fig.~\ref{Fig:Klinkenberg_Concept_figure}):
%-------------------------------------;
%  Enumerate: Steps in the framework  ;
%-------------------------------------;
\begin{tcolorbox}
    \begin{enumerate}
        \label{tex:Work_flow_DeepLS_Klinkenberg}
        \item[\textbf{Step 1.}] Apply the Hopf--Cole transformation to recast the nonlinear governing equations into a linear system in the transformed pressure variable;
       \item[\textbf{Step 2.}] Construct a least-squares energy functional associated with the resulting linear Darcy problem;
      \item[\textbf{Step 3.}] Minimize the energy functional numerically using a neural network–based deep least-squares method to solve the linear Darcy problem and obtain the transformed pressure and velocity fields; and
      \item[\textbf{Step 4.}] Recover the physical gas pressure by applying the inverse Hopf--Cole transformation.
    \end{enumerate}
\end{tcolorbox}

The remainder of this section elaborates on each step of the proposed workflow.

%========================================;
%  Subsection: Hopf--Cole transformation  ;
%========================================;
\subsection{Hopf--Cole transformation}

We introduce a transformed scalar pressure variable defined by
%---------------------------------------;
%  Equation: New variable for pressure  ;
%---------------------------------------;
\begin{align}
    \label{Eqn:Klinkenberg_new_variable}
    \mathcal{P}(\mathbf{x}) 
    := \int \left(1 + \frac{\beta \, p_{\mathrm{atm}}}{p}\right) \, \mathrm{d}p
    = p(\mathbf{x}) + \beta \, p_{\mathrm{atm}} \ln\!\big[p(\mathbf{x})\big]
\end{align}
Taking the spatial gradient of both sides yields
%----------------------------;
%  Equation: grad[mathcalP]  ;
%----------------------------;
\begin{equation}
  \label{Eqn:Kinkenberg_P_derivative}
    \mathrm{grad}\big[\mathcal{P}(\mathbf{x})\big]
    = \left(1 + \frac{\beta \, p_{\mathrm{atm}}}{p(\mathbf{x})} \right)
    \mathrm{grad}\big[p(\mathbf{x})\big]
\end{equation}
Notably, the velocity field remains unchanged under this transformation. Consequently, when expressed in terms of the variables $\mathbf{u}(\mathbf{x})$ and $\mathcal{P}(\mathbf{x})$, the governing equations take the following linear form:
%---------------------;
%  Equation: New BVP  ;
%---------------------;
\begin{subequations}
    \label{Eqn:Klinkenberg_New_BVP}
    \begin{alignat}{2}
        \label{Eqn:Klinkenberg_New_BVP_BoLM}
        &\mu\,\mathbf{K}^{-1}_0(\mathbf{x}) \, \mathbf{u}(\mathbf{x})
        + \mathrm{grad}\big[\mathcal{P}(\mathbf{x})\big] = \boldsymbol{0}
        &&\qquad \text{in } \Omega \\
        %%%
        \label{Eqn:Klinkenberg_New_BVP_BoM}
        &\mathrm{div}\big[\mathbf{u}(\mathbf{x})\big] = 0
        &&\qquad \text{in } \Omega \\
        %%%
        \label{Eqn:Klinkenberg_New_BVP_vBC}
        &\mathbf{u}(\mathbf{x}) \bullet \widehat{\mathbf{n}}(\mathbf{x})
        = u_{n}(\mathbf{x})
        &&\qquad \text{on } \Gamma_u \\
        %%%
        \label{Eqn:Klinkenberg_New_BVP_pBC}
        &\mathcal{P}(\mathbf{x}) = \mathcal{P}_{\mathrm{p}}(\mathbf{x})
        &&\qquad \text{on } \Gamma_p
    \end{alignat}
\end{subequations}
Here, the prescribed pressure boundary condition expressed in terms of the transformed pressure variable is given by
%--------------------------------------;
%  Equation: pBC in terms of mathcalP  ;
%--------------------------------------;
\begin{align}
    \label{Eqn:Klinkenberg_pBC_mathcalP}
    \mathcal{P}_{\mathrm{p}}(\mathbf{x})
    := p_{\mathrm{p}}(\mathbf{x}) + \beta \, p_{\mathrm{atm}}
    \ln\!\big[p_{\mathrm{p}}(\mathbf{x})\big]
\end{align}

The governing equations expressed in terms of the transformed pressure variable $\mathcal{P}(\mathbf{x})$ are linear (cf.~Eqs.~\eqref{Eqn:Klinkenberg_New_BVP_BoLM}–\eqref{Eqn:Klinkenberg_New_BVP_pBC}). It is worth noting that the transformation leading to linearity is not unique. For example, adding an arbitrary constant to the right-hand side of Eq.~\eqref{Eqn:Klinkenberg_new_variable} leaves Eq.~\eqref{Eqn:Kinkenberg_P_derivative} unchanged and therefore results in the same linear governing equations. Additional discussion of Hopf--Cole transformations in porous-media flow can be found in \cite{maduri2025flow}. Also, see the discussion in \cite{Vadasz2010}.

We note that the logarithmic structure of the transformation imposes admissibility constraints on the prescribed pressure data, which are discussed next.

%--------------------------------------;
%  Remark: Admmissibility of pressure  ;
%--------------------------------------;
\begin{remark}[Admissibility of prescribed pressure data]
    \label{Remark:Klinkenberg_pBC_positive}
    A point of caution is necessary regarding the prescribed pressure $p_{\mathrm{p}}(\mathbf{x})$.
    In the physical setting considered here, the pressure represents an \emph{absolute} gas pressure and is therefore strictly positive. Under this assumption, the Hopf--Cole transformation employed in the Klinkenberg model, which involves a logarithmic dependence as shown in Eq.~\eqref{Eqn:Klinkenberg_pBC_mathcalP}, is well-defined.

    However, care must be exercised when applying nondimensionalization or when redefining the pressure datum. In particular, shifting the reference pressure or rescaling the pressure field may result in values that are zero or negative, even if the underlying physical pressure remains positive. In such cases, the Hopf--Cole transformation ceases to be applicable, since the logarithmic mapping is no longer meaningful.

    Consequently, the present formulation requires the use of strictly positive absolute pressures. When this condition is satisfied, no additional restrictions arise from the transformation. In all numerical tests and implementations, we therefore enforce
    \begin{align}
        p_{\mathrm{p}}(\mathbf{x}) \ge p_{\min} > 0
        \quad \text{on } \Gamma_{p}
    \end{align}
    where $p_{\min}$ is a small positive threshold chosen consistently with the physical operating range.
\end{remark}

%========================================;
%  Subsection: Least-squares functional  ;
%========================================;
\subsection{Least-squares functional}
We define the following residuals corresponding to the governing equations and boundary conditions:
%---------------------------;
%  Equation: BVP residuals  ;
%---------------------------;
\begin{subequations}
    \begin{alignat}{2}
        \label{Eqn:DRM_DPP_R1}
        \mathbf{R}_1 
        &:= \mu\,\mathbf{K}^{-1}_0(\mathbf{x}) 
        \, \mathbf{u}(\mathbf{x}) + \mathrm{grad}\big[\mathcal{P}(\mathbf{x})\big] \\
        \label{Eqn:DRM_DPP_R2}
        \mathrm{R}_2 
        &:= \mathrm{div}\big[\mathbf{u}(\mathbf{x})\big] \\
        \label{Eqn:DRM_DPP_R3}
        \mathrm{R}_3 
        &:= \mathbf{u}(\mathbf{x}) \bullet \widehat{\mathbf{n}}(\mathbf{x}) - u_{n}(\mathbf{x}) \\
       \label{Eqn:DRM_DPP_R4}
       \mathrm{R}_4 
       &:= \mathcal{P}(\mathbf{x}) - \mathcal{P}_{\mathrm{p}}(\mathbf{x})
    \end{alignat}
\end{subequations}
Here, the residual $\mathbf{R}_1$ is vector-valued and is therefore typeset in boldface, whereas the residuals $\mathrm{R}_2$, $\mathrm{R}_3$, and $\mathrm{R}_4$ are scalar-valued. The residuals $\mathbf{R}_1$ and $\mathrm{R}_2$ are defined over the domain $\Omega$, while $\mathrm{R}_3$ and $\mathrm{R}_4$ are defined on the boundary segments $\Gamma_u$ and $\Gamma_p$, respectively.

To address the mixed saddle-point structure of the system defined by Eqs.~\eqref{Eqn:DRM_DPP_R1}–\eqref{Eqn:DRM_DPP_R4}, we introduce the following weighted least-squares functional:
%--------------------------------------;
%  Equation: Least-squares functional  ;
%--------------------------------------;
\begin{align}
    \label{Eqn:DRM_Least_squares_functional}
    \Pi_{\mathrm{LS}}\big[\mathcal{P}(\mathbf{x}),\mathbf{u}(\mathbf{x})\big]
    :=
    &\; \frac{1}{2} \, \big\|\mu^{-1/2} \, \sqrt{\mathbf{K}_0(\mathbf{x})} \, \mathbf{R}_1\big\|_{\Omega}^2 
    + \frac{1}{2} \, \big\|\mathrm{R}_2\big\|_{\Omega}^2  
    + \frac{1}{2} \, \big\|\mathrm{R}_3\big\|_{\Gamma_u}^2 
    + \frac{1}{2} \, \big\|\mathrm{R}_4\big\|_{\Gamma_p}^2 
\end{align}
Here, $\|\cdot\|_{K}$ denotes the $L^2$-norm over the set $K$, defined by
\begin{align}
    \|v\|_{K}^2 := \int_{K} v \cdot v \, \mathrm{d}K
\end{align}
with $K = \Omega$, $\Gamma_u$, or $\Gamma_p$ as appropriate. For scalar-valued quantities, $v \cdot v$ reduces to $v^2$.

Equivalently, the least-squares functional may be expressed in integral form as
\begin{align}
    \label{Eqn:DRM_Least_squares_functional_Integral_form}
    \Pi_{\mathrm{LS}}\big[\mathcal{P}(\mathbf{x}),\mathbf{u}(\mathbf{x})\big]
    &= \frac12 \int_{\Omega}
    \Bigl(\mu^{-1/2}\sqrt{\mathbf{K}_0(\mathbf{x})}\,\mathbf{R}_1(\mathbf{x})\Bigr) \bullet 
    \Bigl(\mu^{-1/2}\sqrt{\mathbf{K}_0(\mathbf{x})}\,\mathbf{R}_1(\mathbf{x})\Bigr)\,\mathrm{d}\Omega \nonumber \\
    &\quad + \frac12 \int_{\Omega} \bigl(R_2(\mathbf{x})\bigr)^2\,\mathrm{d}\Omega
    + \frac12 \int_{\Gamma_u} \bigl(R_3(\mathbf{x})\bigr)^2\,\mathrm{d}\Gamma
    + \frac12 \int_{\Gamma_p} \bigl(R_4(\mathbf{x})\bigr)^2\,\mathrm{d}\Gamma 
\end{align}

%============================================;
%  Subsection: Neural network approximation  ;
%============================================;
\subsection{DeepLS neural network approximation} We now present the neural network framework for solving the transformed linear Darcy problem given in Eqs.~\eqref{Eqn:Klinkenberg_New_BVP_BoLM}--\eqref{Eqn:Klinkenberg_New_BVP_pBC} which is carried out by minimizing the energy functional given in Eq.~\eqref{Eqn:DRM_Least_squares_functional_Integral_form}. \textbf{Figure~\ref{Fig:Klinkenberg_NN_architecture}} illustrates the shared-trunk neural network architecture employed within the DeepLS framework, and the main components of the construction are summarized below. A detailed description of the shared-trunk architecture in the context of multi-field problems can be found in \cite{maduri2026apinns}.

%-----------------------------;
%  Figure 2: NN architecture  ;
% ----------------------------;
\begin{figure}
    \centering
    \includegraphics[width=0.85\linewidth]{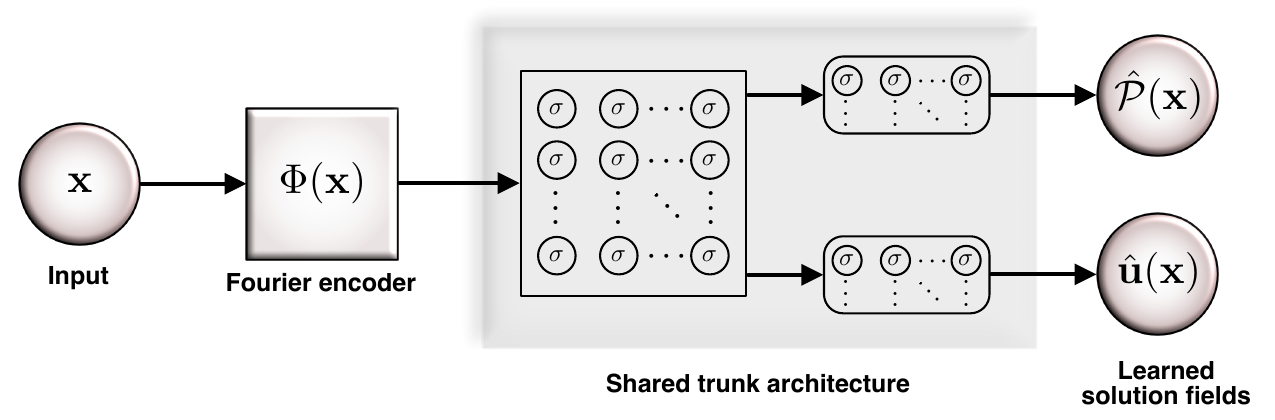}
    \caption{The figure shows the neural network architecture used under the DeepLS framework.}
    \label{Fig:Klinkenberg_NN_architecture}
\end{figure}

%====================================================;
%  Subsubsection: Neural representation of solution  ;
%----------------------------------------------------;
\subsubsection{Neural representation of the solution fields}
Let $\mathcal{N}_{\boldsymbol{\theta}}$ be a neural network with trainable parameters $\boldsymbol{\theta}$. We approximate the transformed pressure and Darcy velocity as
\begin{equation}
    \label{Eqn:DeepLS_ansatz}
    \big(\mathcal{P}(\mathbf{x}),\mathbf{u}(\mathbf{x})\big)
    \;\approx\;
    \big(\mathcal{P}_{\boldsymbol{\theta}}(\mathbf{x}),\mathbf{u}_{\boldsymbol{\theta}}(\mathbf{x})\big)
    :=
    \mathcal{N}_{\boldsymbol{\theta}}(\mathbf{x})
    \qquad \mathbf{x}\in\Omega
\end{equation}
In practice we employ a shared trunk neural network architecture followed by two output heads:
a scalar head for $\mathcal{P}_{\boldsymbol{\theta}}$ and a vector head for
$\mathbf{u}_{\boldsymbol{\theta}}$. To improve expressivity for heterogeneous/steep fields, the spatial
coordinates are first lifted using a Fourier-feature map
\begin{equation}
    \label{Eqn:Fourier_features}
    \boldsymbol{\phi}(\mathbf{x})
    =
    \Big[\mathbf{x},\;\sin(\omega_1\mathbf{x}),\;\cos(\omega_1\mathbf{x}),\;\ldots,\;
    \sin(\omega_{n_f}\mathbf{x}),\;\cos(\omega_{n_f}\mathbf{x})\Big]
\end{equation}
where $\{\omega_k\}$ are fixed frequencies. The network is thus evaluated as
$\mathcal{N}_{\boldsymbol{\theta}}(\mathbf{x})=\widetilde{\mathcal{N}}_{\boldsymbol{\theta}}(\boldsymbol{\phi}(\mathbf{x}))$. Given the parametric approximations $(\mathcal{P}_{\boldsymbol{\theta}}(\mathbf{x}),\mathbf{u}_{\boldsymbol{\theta}}(\mathbf{x}))$, the residuals $\mathbf{R}_1, R_2, R_3, R_4$ defined in Eqs.~\eqref{Eqn:DRM_DPP_R1}–\eqref{Eqn:DRM_DPP_R4} are obtained by replacing $(\mathcal{P}(\mathbf{x}), \mathbf{u}(\mathbf{x}))$ with their parametric counterparts. All required spatial derivatives are evaluated using automatic differentiation.

%============================================;
%  Subsubsection: Monte Carlo approximation  ;
%--------------------------------------------;
\subsubsection{Monte Carlo approximation of the least-squares functional}

The least-squares functional $\Pi$ defined in Eq.~\eqref{Eqn:DRM_Least_squares_functional_Integral_form}
involves integrals over the domain $\Omega$ and the boundary segments $\Gamma_u$ and $\Gamma_p$.
These integrals are approximated using Monte Carlo quadrature based on three sets of collocation points:
\begin{align}
    \mathcal{X}_{\Omega} = 
    \{\mathbf{x}^{(i)}_{\Omega}\}_{i=1}^{N_{\Omega}} \subset\Omega, \quad
    \mathcal{X}_{u} = \{\mathbf{x}^{(j)}_{u}\}_{j=1}^{N_{u}}\subset\Gamma_{u},
    \quad \mathrm{and} \quad 
    \mathcal{X}_{p}=\{\mathbf{x}^{(k)}_{p}\}_{k=1}^{N_{p}}\subset\Gamma_{p}
\end{align}

Using these point clouds, the squared norms appearing in the functional are approximated by sample
averages, optionally scaled by the geometric measures
$|\Omega|$, $|\Gamma_u|$, and $|\Gamma_p|$. These geometric factors may equivalently be absorbed into the corresponding weighting parameters. The resulting empirical DeepLS objective is given by
\begin{subequations}
    \label{Eqn:MC_DLS_loss}
    \begin{align}
        \widehat{\Pi}_{\mathrm{LS}}(\boldsymbol{\theta})
        &=
        \lambda_{1}\,\widehat{\Pi}_{1}(\boldsymbol{\theta})
        +\lambda_{2}\,\widehat{\Pi}_{2}(\boldsymbol{\theta})
        +\lambda_{3}\,\widehat{\Pi}_{3}(\boldsymbol{\theta})
        +\lambda_{4}\,\widehat{\Pi}_{4}(\boldsymbol{\theta}) \\
        \widehat{\Pi}_{1}(\boldsymbol{\theta})
        &=
        \frac{1}{2N_{\Omega}}
        \sum_{i=1}^{N_{\Omega}}
        \Big\|
        \mu^{-1/2}\sqrt{\mathbf{K}_{0}(\mathbf{x}^{(i)}_{\Omega})}\,
        \mathbf{R}_{1,\boldsymbol{\theta}}(\mathbf{x}^{(i)}_{\Omega})
        \Big\|_{2}^{2} \\
        \widehat{\Pi}_{2}(\boldsymbol{\theta})
        &=
        \frac{1}{2N_{\Omega}}
        \sum_{i=1}^{N_{\Omega}}
        \Big(R_{2,\boldsymbol{\theta}}(\mathbf{x}^{(i)}_{\Omega})\Big)^{2} \\
        \widehat{\Pi}_{3}(\boldsymbol{\theta})
        &=
        \frac{1}{2N_{u}}
        \sum_{j=1}^{N_{u}}
        \Big(R_{3,\boldsymbol{\theta}}(\mathbf{x}^{(j)}_{u})\Big)^{2} \\
        \widehat{\Pi}_{4}(\boldsymbol{\theta})
        &=
        \frac{1}{2N_{p}}
        \sum_{k=1}^{N_{p}}
        \Big(R_{4,\boldsymbol{\theta}}(\mathbf{x}^{(k)}_{p})\Big)^{2}
    \end{align}
\end{subequations}

Here, $\mathbf{R}_{1,\boldsymbol{\theta}}$, $R_{2,\boldsymbol{\theta}}$, $R_{3,\boldsymbol{\theta}}$, and
$R_{4,\boldsymbol{\theta}}$ denote the residuals evaluated using the neural-network approximations of the solution fields. The DeepLS approximation is obtained by solving the following optimization problem:
\begin{equation}
    \label{Eqn:DLS_minimizer}
    \boldsymbol{\theta}^{\star}
    = \arg\min_{\boldsymbol{\theta}} \, 
    \widehat{\Pi}_{\mathrm{LS}}(\boldsymbol{\theta})
\end{equation}

%========================================;
%  Subsubsection: Adaptive loss weights  ;
%----------------------------------------;
\subsubsection{Adaptive loss weighting}

The four contributions in Eq.~\eqref{Eqn:MC_DLS_loss} may differ significantly in both magnitude and convergence behavior. To prevent the training process from being dominated by a single term---such as boundary-condition residuals overwhelming interior PDE residuals, or vice versa---we adaptively update the weights $\{\lambda_m\}_{m=1}^{4}$ during training.

Let $L_m^{(t)}$ denote the value of the empirical loss component $\widehat{\Pi}_m$ at training iteration $t$. We monitor the relative decrease
\begin{equation}
    \label{Eqn:rel_drop}
    r_m^{(t)} =
    \left|
    \frac{L_m^{(t-1)} - L_m^{(t)}}{L_m^{(t-1)} + \varepsilon}
    \right|
\end{equation}
and compute a moving average $\overline{r}_m$ over a short window of iterations. When the ratio
\[
    \max_m \; \overline{r}_m / \min_m \; \overline{r}_m
\]
exceeds a prescribed threshold, the weights are rebalanced so that more slowly converging terms receive increased emphasis.

One convenient update rule for the adaptive loss weights is
\begin{equation}
    \label{Eqn:adaptive_weights}
    \lambda_m
    \leftarrow
    1 + \alpha \,
    \frac{\overline{r}_{\max} - \overline{r}_m}
    {\overline{r}_{\max} - \overline{r}_{\min} + \varepsilon}
\end{equation}
where $\overline{r}_{\max} := \max_m \overline{r}_m$ and $\overline{r}_{\min} := \min_m \overline{r}_m$. The parameter $\alpha > 0$ controls the strength of the reweighting, while $\varepsilon \ll 1$ is a small regularization constant introduced to ensure numerical stability. This adaptive weighting strategy is lightweight, requires no additional backpropagation passes, and has been observed to stabilize training in the presence of multiple competing loss terms.

%========================================;
%  Subsubsection: Optimization strategy  ;
%----------------------------------------;
\subsubsection{Optimization strategy}

Due to the neural-network parameterization, the DeepLS objective defined in Eq.~\eqref{Eqn:MC_DLS_loss} depends nonlinearly on the parameter vector $\boldsymbol{\theta}$. We therefore employ a two-stage optimization strategy.

In the first stage, a stochastic first-order optimizer (Adam) is used to rapidly decrease the objective starting from random initialization, using mini-batches of interior collocation points. In the second stage, we switch to a (quasi-)Newton method (L-BFGS) to obtain a high-accuracy solution once the iterates approach a minimizer. In addition, gradient clipping and a learning-rate scheduler may be employed to further enhance robustness and convergence stability.

%======================================;
%  Subsection: Inverse transformation  ;
%======================================;
\subsection{Inverse transformation}
\label{Subsec:Klinkenberg_Inverse_transformation}

After solving the transformed problem for the auxiliary pressure variable $\mathcal{P}(\mathbf{x})$, the original physical gas pressure $p(\mathbf{x})$ is recovered by inverting the Hopf--Cole transformation. When $p(\mathbf{x})$ is interpreted as an absolute pressure, it satisfies $p(\mathbf{x})>0$. On this domain, the mapping
$p \mapsto \mathcal{P} = p + \beta p_{\mathrm{atm}}\ln[p]$
is continuous and strictly monotone for $\beta \ge 0$, which ensures a one-to-one correspondence between $p$ and $\mathcal{P}$. Consequently, the transformation is invertible on the interval $(0,\infty)$.

The inverse mapping can be expressed in closed form using the Lambert--$W$ function \citep{Corless1996LambertW}, defined as the solution of
$W(z)\mathrm{e}^{W(z)} = z$. Applying this definition to the Hopf--Cole transformation yields
\begin{equation}
    \label{eq:Klink_inverse_transform}
    p(\mathbf{x}) =
    \beta\,p_{\mathrm{atm}}\,
    W\!\left(
    \frac{1}{\beta\,p_{\mathrm{atm}}}
    \exp\!\left(\frac{\mathcal{P}(\mathbf{x})}{\beta\,p_{\mathrm{atm}}}\right)
    \right)
\end{equation}
which provides an explicit expression for the physical pressure in terms of the transformed variable. For the positive-pressure regime considered here, the principal branch of the Lambert--$W$ function is used. 
    
    %*********************************************;
%                                             ;
%  NAME                                       ;
%    S4_Klinkenberg_Convergence.tex           ;
%                                             ;
%  WRITTEN BY                                 ;
%    Kalyana B. Nakshatrala                   ;
%                                             ;
%*********************************************;
\section{CONVERGENCE ANALYSIS}
\label{Sec:S4_Klinkenberg_Convergence}

%==========================================;
%  Notation and function analytic setting  ;
%==========================================;
\subsection{Notation and function analytic setting} We now enumerate the assumptions underpinning the subsequent mathematical analysis:
%-------------------------;
%  Enumerate assumptions  ;
%-------------------------;
\begin{enumerate}
    \item[(i)] $\Omega \subset \mathbb{R}^{nd}$ is a bounded Lipschitz domain.
    \item[(ii)] The coefficient of viscosity satisfies $\mu > 0$.
    \item[(iii)] The permeability is an essentially bounded (i.e., $\mathbf{K}(\mathbf{x})\in L^{\infty}(\Omega;\mathbb{R}^{nd\times nd}))$ and uniformly elliptic. That is, there exist $0 < k_{\text{min}} \leq k_{\text{max}} < \infty$ such that 
    \begin{align}
        k_{\min} \, \|\boldsymbol{\xi}\|^{2}
        \le
        \boldsymbol{\xi} \bullet \mathbf{K}(\mathbf{x}) \boldsymbol{\xi}
        \le
        k_{\max} \, \|\boldsymbol{\xi}\|^{2},
        \quad \text{for a.e. }\mathbf{x}\in\Omega, 
        \; \forall\boldsymbol{\xi}\in\mathbb{R}^{nd}
    \end{align}
    Furthermore, the permeability is symmetric. 
    \item[(iv)] The pressure is prescribed on a subset of the boundary of positive measure: $\operatorname{meas}(\Gamma_p) > 0$. This condition fixes the pressure datum.
\end{enumerate}

The set of trial functions---that is, solution fields---is written as follows:
%------------------------------------;
%  Equation: Set of trial functions  ;
%------------------------------------;
\begin{align}
    \label{Eqn:Klinkenberg_set_of_trial_functions}
    \mathbb{U}(\mathbf{x}) 
    = \left\{\begin{array}{c}
        \mathcal{P}(\mathbf{x})\\
        \mathbf{u}(\mathbf{x})
    \end{array} \right\}
\end{align}
Similarly, the associated test (or weighting) functions are collectively represented as follows:
%-----------------------------------;
%  Equation: Set of test functions  ;
%-----------------------------------;
\begin{align}
    \label{Eqn:Klinkenberg_set_of_test_functions}
    \mathbb{W}(\mathbf{x}) 
    = \left\{\begin{array}{c}
        q(\mathbf{x})\\
        \mathbf{w}(\mathbf{x})
    \end{array} \right\}
\end{align}
An explanation of the choice of appropriate function spaces for the individual solution fields is required to ensure that all terms appearing in the least-squares functional, including boundary contributions, are well defined.

The pressure field may be taken in the Sobolev space $H^{1}(\Omega)$. By the trace theorem for scalar $H^{1}$ functions \citep{evanspartial}, this choice ensures that
\begin{align}
    \operatorname{trace}\big(p(\mathbf{x})\big) \in H^{1/2}(\partial \Omega) \subset L^{2}(\partial \Omega)
\end{align}
provided the boundary $\partial \Omega$ is sufficiently regular (e.g., Lipschitz), which is the case in this paper. Consequently, $p(\mathbf{x}) \in H^{1}(\Omega)$ guarantees that all least-squares terms involving the pressure, both in the domain $\Omega$ and on the boundary $\Gamma_p$, are well defined.

Since the velocity boundary condition is enforced weakly, the standard Sobolev space
\( H(\mathrm{div};\Omega) \) \citep{brenner2008mathematical} is not sufficient for the
velocity field. Indeed, if \( \mathbf{u} \in H(\mathrm{div};\Omega) \), then only the normal component
of the trace is well defined, and it satisfies
\begin{align}
    \operatorname{trace}\big(\mathbf{u}(\mathbf{x})\big)
    = \mathbf{u}(\mathbf{x}) \bullet \widehat{\mathbf{n}}(\mathbf{x})
    \in H^{-1/2}(\partial\Omega)
\end{align}
which is a dual space and may contain distributions rather than functions. Moreover, for a Lipschitz boundary $\partial \Omega$, we have 
\begin{align}
    L^{2}(\partial \Omega) \subset 
    H^{-1/2}(\partial \Omega)
\end{align}
with a continuous embedding. Furthermore, the natural norm on \( H^{-1/2}(\partial\Omega) \) is not the
\( L^{2}(\partial\Omega) \) norm. However, in the present formulation the velocity boundary conditions are enforced via
a least-squares term measured in the \( L^{2}(\partial\Omega) \) norm. Consequently,
a stronger function space for the velocity field is required---one that guarantees an
\( L^{2} \)-integrable boundary trace. We, thus, require $\mathbf{u}(\mathbf{x}) \in H(\mathrm{div};\Omega,\Gamma_u)$
where 
%------------------------------------------;
%  Equation: H(div;\Omega,\Gamma_u) space  ;
%------------------------------------------;
\begin{align}
    H(\mathrm{div};\Omega,\Gamma_u) 
    := \Big\{\mathbf{u}(\mathbf{x}) \in 
    H(\mathrm{div};\Omega) \; \Big\vert \; 
    \mathbf{u}(\mathbf{x}) \bullet 
    \widehat{\mathbf{n}}(\mathbf{x}) \in L^2(\Gamma_u)\Big\}
\end{align}

Therefore, the appropriate product function space for $\mathbb{U}(\mathbf{x})$ is 
%----------------------------;
%  Equation: Function space  ;
%----------------------------;
\begin{align}
    \mathcal{U} = H^{1}(\Omega) \times H(\mathrm{div};\Omega,\Gamma_u)
\end{align}
The natural inner product of $\mathcal{U}$ is 
%---------------------------------------;
%  Equation: Appripriate inner product  ;
%---------------------------------------;
\begin{align}
    \label{Eqn:Klinkenberg_Natural_inner_product}
    \big(\mathbb{W}(\mathbf{x});\mathbb{U}(\mathbf{x})\big)_{\mathcal{U}}
    &:= \int_{\Omega} q(\mathbf{x}) \, 
    \mathcal{P}(\mathbf{x}) \, \mathrm{d} \Omega 
    + \int_{\Omega} \mathrm{grad}[q(\mathbf{x})] \bullet 
    \mathrm{grad}[\mathcal{P}(\mathbf{x})]
    \, \mathrm{d} \Omega 
    \nonumber \\ 
    &\hspace{0.25in} +\int_{\Omega} \mathbf{w}(\mathbf{x}) \bullet 
    \mathbf{u}(\mathbf{x}) \, \mathrm{d} \Omega 
    + \int_{\Omega} \mathrm{div}[\mathbf{w}(\mathbf{x})] \, 
    \mathrm{div}[\mathbf{u}(\mathbf{x})]
    \, \mathrm{d} \Omega 
    \nonumber \\ 
    %%%
    &\hspace{0.5in} +\int_{\Gamma_u} \Big(\mathbf{w}(\mathbf{x}) 
    \bullet \widehat{\mathbf{n}}(\mathbf{x}) \Big) \, 
    \Big(\mathbf{u}(\mathbf{x}) \bullet 
    \widehat{\mathbf{n}}(\mathbf{x}) \Big) \, \mathrm{d} \Gamma 
\end{align}
The associated norm is 
%-----------------------------;
%  Equation: Associated norm  ;
%-----------------------------;
\begin{align}
    \big\|\mathbb{U}(\mathbf{x})\big\|^2_{\mathcal{U}} 
    %%%
    := \big(\mathbb{U}(\mathbf{x});\mathbb{U}(\mathbf{x})\big)_{\mathcal{U}}  
    %%%
    &\equiv \big\|\mathcal{P}(\mathbf{x})\big\|^{2}_{H^1(\Omega)}
    + \big\|\mathbf{u}(\mathbf{x})\big\|^{2}_{H(\mathrm{div}; \Omega, \Gamma_u)} 
    \nonumber \\ 
    %%%
    &= \big\|\mathcal{P}(\mathbf{x})\big\|^{2}_{H^1(\Omega)}
    + \big\|\mathbf{u}(\mathbf{x})\big\|^{2}_{H(\mathrm{div}; \Omega)}
    + \big\|\mathbf{u}(\mathbf{x})\bullet\widehat{\mathbf{n}}(\mathbf{x})\big\|^{2}_{L^2(\Gamma_u)} 
    %%%
\end{align}

We now establish that $\mathcal{U}$ is a Hilbert space under the inner product
defined in Eq.~\eqref{Eqn:Klinkenberg_Natural_inner_product}.
We first note that $H^{1}(\Omega)$ is a Hilbert space under its standard inner
product \citep{adams2003sobolev}, corresponding to the first two terms on the
right-hand side of Eq.~\eqref{Eqn:Klinkenberg_Natural_inner_product}.

Moreover, $H(\mathrm{div};\Omega)$ endowed with its graph inner product is a
Hilbert space \citep{GiraultRaviart1986}. Since the normal trace operator from
$H(\mathrm{div};\Omega)$ into $H^{-1/2}(\partial\Omega)$ is continuous and
$L^{2}(\Gamma_u)$ is continuously embedded in $H^{-1/2}(\Gamma_u)$, the induced
graph norm $\|\cdot\|_{H(\mathrm{div};\Omega,\Gamma_u)}$ is complete
\citep{adams2003sobolev,Monk2003}. Hence, $H(\mathrm{div};\Omega,\Gamma_u)$ is
also a Hilbert space under the induced inner product, corresponding to the last
three terms in Eq.~\eqref{Eqn:Klinkenberg_Natural_inner_product}.

It then follows that the Cartesian product
\[
\mathcal{U} = H^{1}(\Omega) \times H(\mathrm{div};\Omega,\Gamma_u),
\]
endowed with the sum inner product given by
Eq.~\eqref{Eqn:Klinkenberg_Natural_inner_product}, is itself a Hilbert space
\citep{brezis2011functional}.

%===========================================================;
%  Remark: Alternative regularity assumptions for the flux  ;
%-----------------------------------------------------------;
\begin{remark}[Alternative regularity assumptions for the flux]
    A brief comment on alternatives to the space $H(\mathrm{div};\Omega,\Gamma_u)$ and their implications for the least-squares construction is in order. The space $H(\mathrm{div};\Omega,\Gamma_u)$ is introduced because the velocity boundary residual is measured in the $L^{2}(\Gamma_u)$ norm, which requires adequate control of the normal trace.

    One possibility is to retain the minimal regularity $\mathbf{u}\in H(\mathrm{div};\Omega)$ and measure the boundary residual in the dual norm $H^{-1/2}(\Gamma_u)$ by incorporating the corresponding term in the least-squares functional. This avoids strengthening the trial space but introduces dual norms and duality pairings. The role of negative-order trace spaces for Darcy flux data and the distinction between natural $H^{-1/2}$ traces and $L^{2}$-based weak enforcement are discussed in \citet{burman2022two}.

    At the other extreme, one may assume stronger bulk regularity, $\mathbf{u}(\mathbf{x})\in [H^{1}(\Omega)]^{d}$, which implies $\mathbf{u}(\mathbf{x})\bullet\widehat{\mathbf{n}}(\mathbf{x})\in H^{1/2}(\Gamma_u)\subset L^{2}(\Gamma_u)$ and thus makes an $L^{2}(\Gamma_u)$ boundary penalty automatically well defined. However, this exceeds what is required for mixed Darcy formulations and can be restrictive for heterogeneous coefficients or low-regularity flux data.

    The present approach balances these alternatives: it preserves the natural $H(\mathrm{div};\Omega)$ conformity of mixed formulations while ensuring that the $L^{2}(\Gamma_u)$ boundary least-squares term is well defined and directly controlled in the induced norm. Augmenting $H(\mathrm{div};\Omega)$ with explicit $L^{2}(\Gamma_u)$ control of the normal trace therefore yields a consistent and flexible functional framework, in line with \citet{bernkopf2024optimal}.
\end{remark}

%======================================;
%  Subsubsection: Useful inequalities  ;
%--------------------------------------;
\subsubsection{Useful inequalities}
We next recall several inequalities that will be used repeatedly in the analysis. The trace theorem for scalar functions in $H^{1}(\Omega)$ implies the existence of a constant $C_{\text{tr}} > 0$, depending only on 
$\Omega$ and $\Gamma_p$, such that
%----------------------------------------;
%  Equation: Trace theorem for H1_Omega  ;
%----------------------------------------;
\begin{align}
    \label{Eqn:Klinkenberg_ptrace_inequality}
    \|p(\mathbf{x})\|_{L^2(\Gamma_p)} 
    &\le C_{\mathrm{tr}} \, \big\|p(\mathbf{x})\big\|_{H^1(\Omega)}
\end{align}
The Friedrichs-Poincar\'e inequality, combined with the trace theorem for scalar functions in $H^{1}(\Omega)$, yields the estimate
%---------------------------------;
%  Equation: Poincare inequality  ;
%---------------------------------;
\begin{align}
    \label{Eqn:Klinkenberg_Poincare_trace_inequality_for_p}
     &\big\|p(\mathbf{x})\big\|_{L^2(\Omega)} 
    \leq C_{\mathrm{P}} \, \Big(\big\|\mathrm{grad}[p(\mathbf{x})]\big\|_{L^2(\Omega)} 
    + \big\|p(\mathbf{x})\big\|_{L^2(\Gamma_p)} \Big) 
\end{align}
%-----------------------------------------------;
%  Equation: Poincare inequality (square form)  ;
%-----------------------------------------------;
Expressing this bound in square form, and invoking the Cauchy–Schwarz (or equivalently $(a + b)^2 \leq 2(a^2 + b^2)$) inequality, we obtain
\begin{align}
    \label{Eqn:Klinkenberg_Poincare_trace_inequality_for_p_square_form}
     &\big\|p(\mathbf{x})\big\|^{2}_{L^2(\Omega)} 
    \leq 2C^2_{\mathrm{P}} \, \Big(\big\|\mathrm{grad}[p(\mathbf{x})]\big\|^2_{L^2(\Omega)} 
    + \big\|p(\mathbf{x})\big\|^2_{L^2(\Gamma_p)} \Big) 
\end{align}
In square form, the Friedrichs inequality for vector fields in $H(\mathrm{div};\Omega,\Gamma_u)$ takes the following form: 
%-------------------------------------------;
%  Equation: Friedrichs inequality in Hdiv  ;
%-------------------------------------------;
\begin{align}
    \label{Eqn:Klinkenberg_Friedrichs_inequality_hdiv}
    \|\mathbf{u}(\mathbf{x})\|^2_{L^2(\Omega)} 
    \le 2C^2_{\mathrm{div}} \Big(
    \big\|\mathrm{div}[\mathbf{u}(\mathbf{x})]\big\|^2_{L^2(\Omega)}
    + \big\|\mathbf{u}(\mathbf{x}) \bullet \mathbf{n}(\mathbf{x})\big\|^2_{L^2(\Gamma_u)}
    \Big)
\end{align}

Proofs of the above inequalities can be found in \cite{adams2003sobolev, LionsMagenes1972, GiraultRaviart1986}.

%===================================================;
%  Subsection: Bilinear form and linear functional  ;
%---------------------------------------------------;
\subsubsection{Bilinear form and linear functional}
With the functional setting established, we define the bilinear form and its associated linear functional. The bilinear form is given by
%---------------------------;
%  Equation: Bilinear form  ;
%---------------------------;
\begin{align}
    \label{Eqn:Klinkenberg_Bilinear_form}
    \mathcal{B}\big(\mathbb{W}(\mathbf{x});\mathbb{U}(\mathbf{x})\big) 
    &:= \int_{\Omega} \Big(\mu \, \mathbf{K}_0^{-1}(\mathbf{x}) \, \mathbf{w}(\mathbf{x}) + \mathrm{grad}[q(\mathbf{x})] \Big) 
    \bullet \frac{1}{\mu} \, \mathbf{K}_0(\mathbf{x}) \, \Big(\mu \, \mathbf{K}_0^{-1}(\mathbf{x}) \, \mathbf{u}(\mathbf{x}) 
    + \mathrm{grad}[\mathcal{P}(\mathbf{x})]\Big) \, \mathrm{d} \Omega 
    \nonumber \\ 
    &\qquad +\int_{\Omega} \mathrm{div}[\mathbf{w}(\mathbf{x})] \, 
    \mathrm{div}[\mathbf{u}(\mathbf{x})]
    \, \mathrm{d} \Omega 
    +\int_{\Gamma_u} \Big(\mathbf{w}(\mathbf{x}) \bullet \widehat{\mathbf{n}}(\mathbf{x}) \Big) \, 
    \Big(\mathbf{u}(\mathbf{x}) \bullet 
    \widehat{\mathbf{n}}(\mathbf{x}) \Big) \, \mathrm{d} \Gamma 
    \nonumber \\ 
    &\qquad \qquad + \int_{\Gamma_p} q(\mathbf{x}) \, \mathcal{P}(\mathbf{x}) \, \mathrm{d} \Gamma 
\end{align}
The linear functional is defined as follows: 
%-------------------------------;
%  Equation: Linear functional  ;
%-------------------------------;
\begin{align}
    \label{Eqn:Klinkenberg_Linear_functional}
    l\big(\mathbb{W}(\mathbf{x})\big) 
    := \int_{\Gamma_u} \Big(\mathbf{w}(\mathbf{x}) \bullet \widehat{\mathbf{n}}(\mathbf{x}) \Big) \, u_n(\mathbf{x}) 
    \, \mathrm{d} \Gamma 
    + \int_{\Gamma_p} q(\mathbf{x}) \, \mathcal{P}_{\mathrm{p}}(\mathbf{x}) \, \mathrm{d} \Gamma 
\end{align}

The \emph{infinite-dimensional optimization problem} can be stated as follows: 
%-------------------------------------------------------;
%  Equation: Infinite-dimensional optimization problem  ;
%-------------------------------------------------------;
\begin{align}
    \label{Eqn:APINNS_Infinite_dimensional_optimization_problem}
    \inf_{\mathbb{U}(\mathbf{x}) \in \mathcal{U}} 
    \Pi_{\text{LS}}\big[\mathbb{U}(\mathbf{x})\big] 
\end{align}
The associated \emph{least-squares objective functional} is defined as
\begin{align}
    \Pi_{\text{LS}}\big[\mathbb{U}(\mathbf{x})\big] 
    &:= \frac{1}{2} \mathcal{B}\big(\mathbb{U}(\mathbf{x});\mathbb{U}(\mathbf{x})\big) 
    - l\big(\mathbb{U}(\mathbf{x})\big) 
\end{align}

%====================================;
%  Subsection: Minimizers and error  ;
%====================================;
\subsection{Minimizers, total error, and error decomposition}

In addition to the ambient solution space $\mathcal U$, the analysis involves a network class 
$\mathcal N \subset \mathcal U$, representing the set of admissible neural network approximations. 

Let $\mathcal{N}_p \subset H^{1}(\Omega)$ and $\mathcal{N}_u \subset H(\mathrm{div};\Omega,\Gamma_u)$ denote ReLU network classes for the pressure and flux/velocity, respectively. Thus, the network class is 
\begin{align}
    \mathcal{N}:=\mathcal{N}_p\times \mathcal{N}_u \subset \mathcal{U}
\end{align}
Note that $\mathcal{U}$ is a convex set, while $\mathcal{N}$, in general, is not convex.

We now distinguish between three minimizers arising in the analysis.

%=============================;
%  Itemize: Three minimizers  ;
%-----------------------------;
\begin{itemize} 
    \item $\mathbb{U}^{\star}$ denotes the \emph{global minimizer} over the ambient space $\mathcal{U}$,  which is the unique solution of the infinite-dimensional least-squares problem. Mathematically, 
    \begin{align}
        \mathbb U^\star
        := \mathop{\arg \; \min}_{\mathbb U \in \mathcal U} 
        \Pi_{\mathrm{LS}}[\mathbb U]
    \end{align}
    %%%
    \item $\mathbb{U}_{\mathrm{dl}}$ denotes the \emph{continuous least-squares minimizer}, which is the minimizer of the  continuous least-squares functional
    restricted to the network class $\mathcal N \subset \mathcal U$. Mathematically, 
    \begin{align}
       \label{Eqn:Klinkenberg_DLS_minimizer}
       \mathbb U_{\mathrm{dl}}
        := \mathop{\arg \; \min}_{\mathbb U \in \mathcal N}
        \Pi_{\mathrm{LS}}[\mathbb U]
    \end{align}
    %%%
    \item In computation, $\Pi_{\mathrm{LS}}$ is replaced by a discrete (empirical) approximation $\Pi_M$, obtained for example by Monte Carlo sampling or by numerical quadrature/collocation. $M$ is the number of collocation points where residuals and boundary mismatches are enforced.  $\mathbb{U}_{M}$ denotes the \emph{trained (discrete) minimizer}, which is the minimizer of the empirical functional $\Pi_{M}$ over $\mathcal{N}$. Mathematically, 
    \begin{align}
       \mathbb U_M
        := \mathop{\arg \; \min}_{\mathbb U \in \mathcal N}
        \Pi_M[\mathbb U]
    \end{align}
\end{itemize}

For the convergence analysis, it is convenient to introduce the \emph{bias gap}, which quantifies the performance loss incurred by the learned solution relative to the true optimal solution when both are evaluated under the same least-squares functional. In other words, it measures the intrinsic limitation imposed by the chosen model class. Formally, it is defined as
%---------------------------------;
%  Equation: Bias gap definition  ;
%---------------------------------;
\begin{align}
    \label{Eqn:Klinkenberg_Bias_gap}
    \Delta_{\mathcal N} 
    := \Pi_{\mathrm{LS}}[\mathbb U_{\mathrm{dl}}] -\Pi_{\mathrm{LS}}[\mathbb U^\star] 
\end{align}
Since $\mathbb U^\star$ is the global minimizer of $\Pi_{\mathrm{LS}}$, the bias gap is necessarily non-negative, that is,
%----------------------------------------;
%  Equation: Non-negativity of bias gap  ;
%----------------------------------------;
\begin{align}
    \Delta_{\mathcal{N}} \geq 0 
\end{align}

To study convergence toward the optimal solution, we measure the overall discrepancy between the computed solution $\mathbb U_M$ and the exact minimizer $\mathbb U^\star$ in the norm $\|\cdot\|{\mathcal U}$. The \emph{total error} is thus defined as
%-------------------------;
%  Equation: Total error  ;
%-------------------------;
\begin{align}
    \label{Eqn:Klinkenberg_Total_error}
    \| \mathbb{U}_{M} - \mathbb{U}^{*} \|_{\mathcal{U}}
\end{align}
A natural starting point for the analysis is the identity
%----------------------;
%  Equation: Identity  ;
%----------------------;
\begin{align}
    \mathbb{U}_{M} - \mathbb{U}^{*} 
    = \mathbb{U}_{M} - \mathbb{U}_{\mathrm{dl}} 
    +  \mathbb{U}_{\mathrm{dl}} - \mathbb{U}^{*} 
\end{align}
which separates the deviation into a part due to finite sampling and a part due to model approximation. Applying the triangle inequality immediately yields the error decomposition:
%---------------------------------;
%  Equation: Error decomposition  ;
%---------------------------------;
\begin{align}
    \underbracket{\|\mathbb U_M - \mathbb U^\star\|_{\mathcal U}}_{\text{total error}}
    \le \underbracket{\|\mathbb U_M - 
    \mathbb U_{\mathrm{dl}}\|_{\mathcal U}}_{\text{discretization error}}
    + \underbracket{\|\mathbb U_{\mathrm{dl}} - 
    \mathbb U^\star\|_{\mathcal U}}_{\text{approximation error}}
\end{align}
This decomposition highlights two distinct error sources:
\begin{enumerate}
\item a \emph{discretization (statistical) error}, stemming from the finite number of collocation points $M$, and
\item an \emph{approximation (bias) error}, reflecting the representational limitations of the neural network class.
\end{enumerate}

The goal of the convergence analysis is to show that the total error vanishes as both the network capacity (depth and width) increases and the number of collocation points $M$ tends to infinity, thereby controlling both contributions simultaneously.

Our road map for the convergence analysis is as follows.

\begin{enumerate}
    %%%
    \item \textbf{Establish the mathematical target.}  We analyze the continuous infinite-dimensional least-squares problem and prove existence, uniqueness, and stability of its minimizer $\mathbb U^\star \in \mathcal U$. This identifies the precise solution toward which all approximations should converge.
    %%%
    \item \textbf{Quantifying approximation and discretization errors.}  Building on the previously introduced DeepLS minimizers, we estimate the discrepancy caused by (i) restricting the problem to the network class (approximation/bias error) and (ii) replacing the continuous objective by its empirical counterpart (discretization error).
    %%%
    \item \textbf{Combine stability and approximation.} Using quadratic growth, we convert bounds on the functional suboptimality (i.e., \(\Pi_{\mathrm{LS}}[\mathbb U_M] - \Pi_{\mathrm{LS}}[\mathbb U^\star]\)) into bounds on $\|\mathbb U_M-\mathbb U^\star\|_{\mathcal U}$. This argument yields an explicit bound on the total error between the trained solution and the target.
    %%%
    \item \textbf{Return to the physical variables.} Finally, we transfer convergence from the transformed variables $(\mathcal P,\mathbf u)$ to the physical pressure and velocity fields.
    %%%
\end{enumerate}

%=================================================;
%  Subsection: Establish the mathematical target  ;
%=================================================;
\subsection{Etablish the mathematical target: Continuous infinite-dimensional problem}
We now analyze the continuous problem in the infinite-dimensional setting. 
Our objective is to prove that it admits a unique solution and to characterize this solution both through the weak formulation and through an equivalent variational principle.

The argument proceeds in two steps. 
First, we establish that the bilinear form is bounded and coercive with respect to the norm $\|\cdot\|_{\mathcal{U}}$. 
By the Lax–Milgram theorem, these properties ensure existence and uniqueness of the solution to the associated weak formulation. 
Second, we show that the functional $\Pi_{\mathrm{LS}}$ is strongly convex and satisfies a quadratic-growth estimate. 
These properties imply that the corresponding optimization problem admits a unique minimizer. 
This minimizer coincides with the weak solution, thereby completing the argument.

\vspace{0.15in}
%=========================================;
%  Subsection: Coercivity and boudedness  ;
%-----------------------------------------;
\subsubsection{Coercivity and boundedness estimates}
We begin by establishing the coercivity of the bilinear form $\mathcal{B}$.

%=======================;
%  Theorem: Coercivity  ;
%-----------------------;
\begin{theorem}[Coercivity]
    \label{Thm:Klinkenberg_Coercivity}
    The bilinear form $\mathcal{B} : \mathcal{U}\times\mathcal{U}\to\mathbb{R}$ is strictly coercive on $\mathcal{U}$. That is, there exists $\alpha_0 > 0$ such that 
    \begin{align}
        \label{Eqn:APINNS_Coercivity_def}
        \alpha_0 \, \big\|\mathbb{U}(\mathbf{x})\big\|_{\mathcal{U}}^{2} \leq   \mathcal{B}\big(\mathbb{U}(\mathbf{x});\mathbb{U}(\mathbf{x})\big)
        \qquad \forall\mathbb{U}(\mathbf{x})\in\mathcal{U}
    \end{align}
\end{theorem}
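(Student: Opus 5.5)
The plan is to avoid expanding the weighted first-order residual, and so avoid the cross term $\int_\Omega \mathbf{u}\bullet\mathrm{grad}[\mathcal{P}]\,\mathrm{d}\Omega$. Integrating that term by parts would produce a boundary integral of $(\mathbf{u}\bullet\widehat{\mathbf{n}})\,\mathcal{P}$ over $\Gamma_p$. That integral is not controlled by $\|\cdot\|_{\mathcal{U}}$, since the norm only controls the normal trace on $\Gamma_u$. Instead, each piece of $\|\mathbb{U}\|_{\mathcal{U}}^2$ will be bounded by the four nonnegative terms of $\mathcal{B}(\mathbb{U};\mathbb{U})$, using the inequalities recalled above.

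First, write $\mathbf{r} := \mu\mathbf{K}_0^{-1}\mathbf{u} + \mathrm{grad}[\mathcal{P}]$. Setting $\mathbb{W}=\mathbb{U}$ in Eq.~\eqref{Eqn:Klinkenberg_Bilinear_form} gives
\begin{align}
\mathcal{B}(\mathbb{U};\mathbb{U}) = \big\|\mu^{-1/2}\mathbf{K}_0^{1/2}\mathbf{r}\big\|^2_{L^2(\Omega)} + \|\mathrm{div}[\mathbf{u}]\|^2_{L^2(\Omega)} + \|\mathbf{u}\bullet\widehat{\mathbf{n}}\|^2_{L^2(\Gamma_u)} + \|\mathcal{P}\|^2_{L^2(\Gamma_p)}.
\end{align}
By uniform ellipticity, the first term is at least $(k_{\min}/\mu)\|\mathbf{r}\|^2_{L^2(\Omega)}$.

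Second, the velocity is handled directly:
\begin{align}
\|\mathbf{u}\|^2_{H(\mathrm{div};\Omega,\Gamma_u)} = \|\mathbf{u}\|^2_{L^2} + \|\mathrm{div}[\mathbf{u}]\|^2 + \|\mathbf{u}\bullet\widehat{\mathbf{n}}\|^2_{\Gamma_u}.
\end{align}
The first summand is bounded by the Friedrichs inequality~\eqref{Eqn:Klinkenberg_Friedrichs_inequality_hdiv}, so this whole norm is at most $(1+2C_{\mathrm{div}}^2)$ times the second and third terms of $\mathcal{B}(\mathbb{U};\mathbb{U})$.

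Third, the pressure gradient is recovered from the residual. Since $\mathrm{grad}[\mathcal{P}] = \mathbf{r} - \mu\mathbf{K}_0^{-1}\mathbf{u}$ and $\|\mathbf{K}_0^{-1}\| \le 1/k_{\min}$, we get
\begin{align}
\|\mathrm{grad}[\mathcal{P}]\|^2 \le 2\|\mathbf{r}\|^2 + 2(\mu/k_{\min})^2\|\mathbf{u}\|^2_{L^2}.
\end{align}
Both pieces are already bounded by $\mathcal{B}(\mathbb{U};\mathbb{U})$ from the previous two steps. Then the square-form Poincar\'e--trace inequality~\eqref{Eqn:Klinkenberg_Poincare_trace_inequality_for_p_square_form} bounds $\|\mathcal{P}\|^2_{L^2(\Omega)}$ by $\|\mathrm{grad}[\mathcal{P}]\|^2$ plus the $\Gamma_p$ term, which is exactly the last term of $\mathcal{B}$. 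Summing these estimates gives $\|\mathbb{U}\|^2_{\mathcal{U}} \le C\,\mathcal{B}(\mathbb{U};\mathbb{U})$ with $C$ depending only on $\mu$, $k_{\min}$, $k_{\max}$, $C_{\mathrm{P}}$ and $C_{\mathrm{div}}$. Taking $\alpha_0 = 1/C$ proves the theorem.

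The main obstacle is the velocity step. Everything rests on the Friedrichs-type bound~\eqref{Eqn:Klinkenberg_Friedrichs_inequality_hdiv}, which controls $\|\mathbf{u}\|_{L^2}$ by the divergence and the normal trace on $\Gamma_u$ alone. I will take it as given, as the paper states it. I note, however, that it cannot hold in general when $\Gamma_u$ is small: for instance, divergence-free fields with zero normal flux on $\Gamma_u$ need not vanish. If the inequality is unavailable, the fallback is to keep the cross term and use the first residual to control $\mathbf{u}$ through $\mathcal{P}$. Concretely, I would test $\mathbf{r}$ against $\mathrm{grad}[\mathcal{P}]$ and $\mathbf{u}$, with Young's inequality. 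That route still has to deal with the $\Gamma_p$ normal-trace term. So I would first try the direct residual-splitting argument above and note explicitly where the Friedrichs inequality is used.
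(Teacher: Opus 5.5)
There is a genuine gap, and it is exactly the step you flagged. The Friedrichs-type bound \eqref{Eqn:Klinkenberg_Friedrichs_inequality_hdiv} is false for \emph{every} choice of $\Gamma_u$, not only small ones. In two dimensions, take any nonzero $\psi\in C_c^\infty(\Omega)$ and set $\mathbf{u}=(\partial_y\psi,\,-\partial_x\psi)$. Then $\mathrm{div}[\mathbf{u}]=0$ and $\mathbf{u}$ vanishes near $\partial\Omega$, so the right-hand side is zero while $\|\mathbf{u}\|_{L^2(\Omega)}>0$; in three dimensions use the curl of a compactly supported field. So you cannot take it as given.

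The paper's proof relies on the same inequality at the same place. After its Peter--Paul step with $\varepsilon=2\mu/k_{\min}$, the coefficient $\lambda_1(\mu k_{\max}^{-1}-\varepsilon)$ of $\|\mathbf{u}\|^2_{L^2(\Omega)}$ is negative, and only the Friedrichs step restores it. Your residual-splitting route is a cleaner way to the same conclusion: it needs no weights and no Peter--Paul. Its other ingredients are sound: the ellipticity bound, $\|\mathrm{grad}[\mathcal{P}]\|^2\le 2\|\mathbf{r}\|^2+2(\mu/k_{\min})^2\|\mathbf{u}\|^2$, and the Poincar\'e--trace inequality, which is valid when $\mathrm{meas}(\Gamma_p)>0$. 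But it inherits exactly this defect.

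Your fallback cannot close the gap either, because the $\Gamma_p$ obstruction you identified is fatal: the coercivity estimate as stated is false in the $\|\cdot\|_{\mathcal{U}}$ norm. Let $\Omega$ be the unit disk with $\Gamma_p=\partial\Omega$ and $\Gamma_u=\emptyset$. This is permitted by the assumptions and is the situation of the concentric-cylinder benchmark. Take $\mathbf{K}_0=k_0\mathbf{I}$, $\mathcal{P}_n=r^n\cos(n\theta)$ and $\mathbf{u}_n=-(k_0/\mu)\,\mathrm{grad}[\mathcal{P}_n]$. Then $\mathbf{R}_1=\mathbf{0}$ and $\mathrm{div}[\mathbf{u}_n]=0$, so $\mathcal{B}(\mathbb{U}_n;\mathbb{U}_n)=\|\mathcal{P}_n\|^2_{L^2(\partial\Omega)}=\pi$. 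On the other hand, $\|\mathbb{U}_n\|^2_{\mathcal{U}}\ge\|\mathrm{grad}[\mathcal{P}_n]\|^2_{L^2(\Omega)}=n\pi$. An $L^2(\Gamma_p)$ penalty cannot control the $H^{1/2}$ trace, hence cannot control $\|\mathcal{P}\|_{H^1(\Omega)}$. Placing oscillatory Dirichlet data on a piece of $\Gamma_p$ reproduces the failure when $\Gamma_u\neq\emptyset$.

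What your fallback \emph{does} prove is coercivity when the pressure condition is imposed strongly, i.e., on $\{\mathcal{P}\in H^1(\Omega):\mathcal{P}=0 \text{ on } \Gamma_p\}\times H(\mathrm{div};\Omega,\Gamma_u)$. There $\int_\Omega\mathbf{u}\bullet\mathrm{grad}[\mathcal{P}]\,\mathrm{d}\Omega=-\int_\Omega\mathrm{div}[\mathbf{u}]\,\mathcal{P}\,\mathrm{d}\Omega+\int_{\Gamma_u}(\mathbf{u}\bullet\widehat{\mathbf{n}})\,\mathcal{P}\,\mathrm{d}\Gamma$. The Poincar\'e and trace inequalities bound this by $C\big(\|\mathrm{div}[\mathbf{u}]\|_{L^2(\Omega)}+\|\mathbf{u}\bullet\widehat{\mathbf{n}}\|_{L^2(\Gamma_u)}\big)\|\mathrm{grad}[\mathcal{P}]\|_{L^2(\Omega)}$. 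Young's inequality applied to the expanded first residual then yields $\|\mathbf{u}\|^2_{L^2(\Omega)}$ and $\|\mathrm{grad}[\mathcal{P}]\|^2_{L^2(\Omega)}$, with no $H(\mathrm{div})$ Friedrichs inequality at all. Alternatively, penalize the $\Gamma_p$ mismatch in $H^{1/2}(\Gamma_p)$; a bounded lifting of the trace then reduces this to the strongly imposed case.
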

%-----------------------;
%  Proof of coercivity  ;
%-----------------------;
\begin{proof}
    We consider the following weighted energy functional: 
    %----------;
    %  Step 1  ;
    %----------;
    \begin{align}
        \mathcal{E}_{\lambda}\big(\mathbb{U}(\mathbf{x})\big) 
        &= \lambda_1 \int_{\Omega} \big(\mu \mathbf{K}_0^{-1}(\mathbf{x})\mathbf{u}(\mathbf{x}) 
        + \mathrm{grad}[\mathcal{P}(\mathbf{x})]\big) \bullet \frac{1}{\mu} \mathbf{K}_0(\mathbf{x}) \, \big(\mu \mathbf{K}_0^{-1}(\mathbf{x})
        \mathbf{u}(\mathbf{x}) 
        + \mathrm{grad}[\mathcal{P}(\mathbf{x})]\big) 
        \, \mathrm{d} \Omega 
        \nonumber \\ 
        %%%
        &\hspace{0.1in} +\lambda_2 \int_{\Omega} 
        \Big(\mathrm{div}\big[\mathbf{u}(\mathbf{x})\big] 
        \Big)^2 \, \mathrm{d} \Omega 
        +\lambda_3 \int_{\Gamma_u} 
        \Big(\mathbf{u}(\mathbf{x}) \bullet \widehat{\mathbf{n}}(\mathbf{x})\Big)^2 \, \mathrm{d} \Gamma 
        +\lambda_4 \int_{\Gamma_p} 
        \mathcal{P}^2(\mathbf{x}) \, \mathrm{d} \Gamma
    \end{align}
    Expanding the first integrand and exploiting the symmetry of the tensor $\mathbf{K}_0(\mathbf{x})$, the functional can be written as
    %----------;
    %  Step 2  ;
    %----------;
    \begin{align}
        \mathcal{E}_{\lambda}\big(\mathbb{U}(\mathbf{x})\big) 
        &= \lambda_1 \int_{\Omega} \mathbf{u}(\mathbf{x}) \bullet 
        \mu \mathbf{K}_0^{-1}(\mathbf{x})
        \mathbf{u}(\mathbf{x}) \, \mathrm{d} \Omega 
        %%%
        + 2 \lambda_1 \int_{\Omega} \mathbf{u}(\mathbf{x}) 
        \bullet \mathrm{grad}[\mathcal{P}(\mathbf{x})]
        \, \mathrm{d} \Omega 
        \nonumber \\ 
        %%%
        &\qquad + \lambda_1 \int_{\Omega} \mathrm{grad}[\mathcal{P}(\mathbf{x})] 
        \bullet \frac{1}{\mu} \mathbf{K}_0(\mathbf{x}) \, 
        \mathrm{grad}[\mathcal{P}(\mathbf{x})]
        \, \mathrm{d} \Omega 
        %%%
        +\lambda_2 \int_{\Omega} 
        \Big(\mathrm{div}\big[\mathbf{u}(\mathbf{x})\big] 
        \Big)^2 \, \mathrm{d} \Omega \nonumber \\
        &\qquad \qquad +\lambda_3 \int_{\Gamma_u} 
        \Big(\mathbf{u}(\mathbf{x}) \bullet \widehat{\mathbf{n}}(\mathbf{x})\Big)^2 \, \mathrm{d} \Gamma 
        +\lambda_4 \int_{\Gamma_p} 
        \mathcal{P}^{2}(\mathbf{x}) \, \mathrm{d} \Gamma
    \end{align}
    Using the bounds on $\mathbf{K}_0(\mathbf{x})$, we obtain the following inequality: 
    %----------;
    %  Step 3  ;
    %----------;
    \begin{align}
        \mathcal{E}_{\lambda}\big(\mathbb{U}(\mathbf{x})\big) 
        &\ge \lambda_1 \mu k_{\text{max}}^{-1} 
        \big\|\mathbf{u}(\mathbf{x})\big\|^{2}_{L^{2}(\Omega)} 
        %%%
        + 2 \lambda_1 \int_{\Omega} \mathbf{u}(\mathbf{x}) 
        \bullet \mathrm{grad}[\mathcal{P}(\mathbf{x})]
        \, \mathrm{d} \Omega 
        %%%
        + \frac{\lambda_1k_{\text{min}}}{\mu} 
        \big\|\mathrm{grad}[\mathcal{P}(\mathbf{x})]\big\|^{2}_{L^2(\Omega)} 
        \nonumber \\ 
        %%%
        &\qquad +\lambda_2 \big\|\mathrm{div}\big[\mathbf{u}(\mathbf{x})\big] 
        \big\|^2_{L^2(\Omega)} 
        +\lambda_3 \big\|\mathbf{u}(\mathbf{x}) 
        \bullet \widehat{\mathbf{n}}(\mathbf{x})\big\|^2_{L^2(\Gamma_u)}
        +\lambda_4 \big\|\mathcal{P}(\mathbf{x})\big\|^{2}_{L^2(\Gamma_p)} 
    \end{align}
    By applying the Peter–Paul inequality to the second term on the right-hand side of the preceding equation, namely the mixed term, we obtain
    %----------;
    %  Step 3  ;
    %----------;
    \begin{align}
        \mathcal{E}_{\lambda}\big(\mathbb{U}(\mathbf{x})\big) 
        &\ge \lambda_1 \big(\mu k_{\text{max}}^{-1} - \varepsilon\big) 
        \big\|\mathbf{u}(\mathbf{x})\big\|^{2}_{L^{2}(\Omega)} 
        %%%
        %%%
        + \lambda_1 \big(\mu^{-1} k_{\text{min}} - \varepsilon^{-1}\big) 
        \big\|\mathrm{grad}[\mathcal{P}(\mathbf{x})]\big\|^{2}_{L^2(\Omega)} 
        \nonumber \\ 
        %%%
        &\qquad +\lambda_2 \big\|\mathrm{div}\big[\mathbf{u}(\mathbf{x})\big] 
        \big\|^2_{L^2(\Omega)} 
        +\lambda_3 \big\|\mathbf{u}(\mathbf{x}) 
        \bullet \widehat{\mathbf{n}}(\mathbf{x})\big\|^2_{L^2(\Gamma_u)}
        +\lambda_4 \big\|\mathcal{P}(\mathbf{x})\big\|^{2}_{L^2(\Gamma_p)} 
    \end{align}
    where $\varepsilon > 0$. 
    
    Splitting the fourth term on the right-hand side of the preceding equation (i.e., the divergence term) into two equal parts and applying the Friedrichs inequality \eqref{Eqn:Klinkenberg_Friedrichs_inequality_hdiv} to one of them, we obtain 
    %----------;
    %  Step 4  ;
    %----------;
    \begin{align}
        \mathcal{E}_{\lambda}\big(\mathbb{U}(\mathbf{x})\big) 
        &\ge \bigg( \lambda_1 \big(\mu k_{\text{max}}^{-1} - \varepsilon\big) + \frac{\lambda_2}{2C^2_{\text{div}}}  \bigg) 
        \big\|\mathbf{u}(\mathbf{x})\big\|^{2}_{L^{2}(\Omega)} 
        %%%
        %%%
        + \lambda_1 \big(\mu^{-1} k_{\text{min}} - \varepsilon^{-1}\big) 
        \big\|\mathrm{grad}[\mathcal{P}(\mathbf{x})]\big\|^{2}_{L^2(\Omega)} 
        \nonumber \\ 
        %%%
        &\qquad +\frac{\lambda_2}{2} \big\|\mathrm{div}\big[\mathbf{u}(\mathbf{x})\big] 
        \big\|^2_{L^2(\Omega)} 
        +\Big(\lambda_3 - \frac{\lambda_2}{2}\Big) 
        \big\|\mathbf{u}(\mathbf{x}) 
        \bullet \widehat{\mathbf{n}}(\mathbf{x})\big\|^2_{L^2(\Gamma_u)}
        +\lambda_4 \big\|\mathcal{P}(\mathbf{x})\big\|^{2}_{L^2(\Gamma_p)} 
    \end{align}
    
    We now choose 
    \begin{align}
        \varepsilon = \frac{2\mu}{k_{\text{min}}} > 0
    \end{align}
    and the weights $\lambda_i \; (i = 1, \cdots, 4)$ as 
    \begin{align}
        \lambda_1 = 2 \varepsilon = \frac{4\mu}{k_{\text{min}}}, \; 
        \lambda_2 = 2 C^2_{\text{div}} \, \lambda_1 \, \varepsilon 
        = 4 C^2_{\text{div}} \, \varepsilon^2 
        = \frac{16\mu C_{\text{div}}^2}{k_{\text{min}}^2}, \;
        \lambda_3 = \lambda_2 = \frac{16\mu C_{\text{div}}^2}{k_{\text{min}}^2}, \;
        \lambda_4 = 1
    \end{align}
    With these choices, the previous estimate reduces to 
    %----------;
    %  Step 5  ;
    %----------;
    \begin{align}
        \mathcal{E}_{\lambda}\big(\mathbb{U}(\mathbf{x})\big) 
        &\ge \frac{4\mu^2}{k_{\text{min}}k_{\text{max}}}
        \big\|\mathbf{u}(\mathbf{x})\big\|^{2}_{L^{2}(\Omega)} 
        %%%
        + 2 \, \big\|\mathrm{grad}[\mathcal{P}(\mathbf{x})]\big\|^{2}_{L^2(\Omega)} 
        \nonumber \\ 
        %%%
        &\hspace{0.5in} + \frac{8\mu C_{\text{div}}^2}{k_{\text{min}}^2} \big\|\mathrm{div}\big[\mathbf{u}(\mathbf{x})\big] 
        \big\|^2_{L^2(\Omega)} 
        + \frac{8\mu C_{\text{div}}^2}{k_{\text{min}}^2}
        \big\|\mathbf{u}(\mathbf{x}) 
        \bullet \widehat{\mathbf{n}}(\mathbf{x})\big\|^2_{L^2(\Gamma_u)}
        +\big\|\mathcal{P}(\mathbf{x})\big\|^{2}_{L^2(\Gamma_p)} 
    \end{align}
    
    Applying the Friedrichs–Poincar\'e inequality in squared form \eqref{Eqn:Klinkenberg_Poincare_trace_inequality_for_p_square_form} to half of the second term (the one involving $\mathrm{grad}[\mathcal{P}(\mathbf{x})]$) and absorbing the last term yields
    %----------;
    %  Step 6  ;
    %----------;
    \begin{align}
        \mathcal{E}_{\lambda}\big(\mathbb{U}(\mathbf{x})\big) 
        &\ge \frac{4\mu^2}{k_{\text{min}}k_{\text{max}}} 
        \big\|\mathbf{u}(\mathbf{x})\big\|^{2}_{L^{2}(\Omega)} 
        %%%
        + \big\|\mathrm{grad}[\mathcal{P}(\mathbf{x})]\big\|^{2}_{L^2(\Omega)} 
        + \frac{1}{2C_P^2}\big\|\mathcal{P}(\mathbf{x})\big\|^{2}_{L^2(\Omega)} 
        \nonumber \\ 
        %%%
        &\hspace{1in}+ \frac{8\mu C_{\text{div}}^2}{k_{\text{min}}^2} \big\|\mathrm{div}\big[\mathbf{u}(\mathbf{x})\big] 
        \big\|^2_{L^2(\Omega)} 
        + \frac{8\mu C_{\text{div}}^2}{k_{\text{min}}^2}
        \big\|\mathbf{u}(\mathbf{x}) 
        \bullet \widehat{\mathbf{n}}(\mathbf{x})\big\|^2_{L^2(\Gamma_u)}
    \end{align}
    Define 
    %---------------------;
    %  Equation: Delta_1  ;
    %---------------------;
    \begin{align}
        \Delta_1 := \min\bigg[\frac{4\mu^2}{k_{\text{min}}k_{\text{max}}}, \, 1, \, \frac{1}{2C_P^2}, 
        \, \frac{8\mu C_{\text{div}}^2}{k_{\text{min}}^2}\bigg] > 0 
    \end{align}
    Then 
    %----------;
    %  Step 7  ;
    %----------;
    \begin{align}
        \mathcal{E}_{\lambda}\big(\mathbb{U}(\mathbf{x})\big) 
        &\ge \Delta_1 \bigg(\big\|\mathcal{P}(\mathbf{x})\big\|^{2}_{L^2(\Omega)} 
        + \big\|\mathrm{grad}[\mathcal{P}(\mathbf{x})]\big\|^{2}_{L^2(\Omega)} 
        + \big\|\mathbf{u}(\mathbf{x})\big\|^{2}_{L^{2}(\Omega)} 
        \nonumber \\
        %%%
        &\hspace{1in} + \big\|\mathrm{div}\big[\mathbf{u}(\mathbf{x})\big] 
        \big\|^2_{L^2(\Omega)} 
        + \big\|\mathbf{u}(\mathbf{x}) 
        \bullet \widehat{\mathbf{n}}(\mathbf{x})\big\|^2_{L^2(\Gamma_u)}
        \bigg) 
        = \Delta_1 \big\|\mathbb{U}(\mathbf{x})\big\|^{2}_{\mathcal{U}}
    \end{align}

    Finally, define
    %---------------------;
    %  Equation: Delta_2  ;
    %---------------------;
    \begin{align}
        \Delta_2 := \max\big[\lambda_1, \lambda_2, \lambda_3, \lambda_4\big] > 0 
    \end{align}
    It then follows that 
    \begin{align}
        \Delta_1 
        \big\|\mathbb{U}(\mathbf{x})\big\|^{2}_{\mathcal{U}}
        \leq \mathcal{E}_{\lambda}\big(\mathbb{U}(\mathbf{x})\big)
        \leq \Delta_2 \, \mathcal{B}\big(\mathbb{U}(\mathbf{x});\mathbb{U}(\mathbf{x})\big)
    \end{align}
    Choosing \begin{align}
        \alpha_0 = \frac{\Delta_1}{\Delta_2}
    \end{align}
    yields the desired result.
    %%%
\end{proof}

%========================;
%  Theorem: Boundedness  ;
%------------------------;
\begin{theorem}[Boundedness]
    \label{Thm:Klinkenberg_Boundedness_of_bilinear_form}
    The bilinear form is bounded above. That is, there exists a constant $C_B > 0$ such that 
    \begin{align}
        \big\vert\mathcal{B}\big(\mathbb{W}(\mathbf{x});\mathbb{U}(\mathbf{x})\big) \big\vert 
        \leq C_{B} \big\|\mathbb{W}(\mathbf{x})\big\|_{\mathcal{U}} 
        \big\|\mathbb{U}(\mathbf{x})\big\|_{\mathcal{U}}
    \end{align}
\end{theorem}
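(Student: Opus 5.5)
The plan is to bound each of the four integrals in the definition of $\mathcal{B}$ in Eq.~\eqref{Eqn:Klinkenberg_Bilinear_form} separately by Cauchy--Schwarz, and then collect the resulting estimates into a single product of $\mathcal{U}$-norms.

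\textbf{Step 1 (the weighted first term).} The first term is an inner product in the weighted $L^2$ space with weight $\frac{1}{\mu}\mathbf{K}_0(\mathbf{x})$. Since $\mathbf{K}_0$ is symmetric and positive definite, Cauchy--Schwarz in this weighted inner product bounds the term by the product of the two weighted norms. By the upper ellipticity bound, each weighted norm satisfies
\[
\big\|\mu^{-1/2}\sqrt{\mathbf{K}_0}\,(\mu\mathbf{K}_0^{-1}\mathbf{u}+\mathrm{grad}[\mathcal{P}])\big\|_{L^2(\Omega)}
\le \sqrt{k_{\max}/\mu}\,\Big(\mu k_{\min}^{-1}\|\mathbf{u}\|_{L^2(\Omega)}+\|\mathrm{grad}[\mathcal{P}]\|_{L^2(\Omega)}\Big).
\]
Here I use $\|\mathbf{K}_0^{-1}\|\le k_{\min}^{-1}$, which follows from the lower ellipticity bound. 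The right-hand side is in turn bounded by $c_1\|\mathbb{U}\|_{\mathcal{U}}$, with $c_1=\sqrt{2k_{\max}/\mu}\,\max(\mu/k_{\min},1)$. The same estimate holds for $\mathbb{W}$. The first term is therefore bounded by $c_1^2\|\mathbb{W}\|_{\mathcal{U}}\|\mathbb{U}\|_{\mathcal{U}}$.

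\textbf{Step 2 (divergence and flux-boundary terms).} The divergence term and the $\Gamma_u$ boundary term can be bounded directly by Cauchy--Schwarz in $L^2(\Omega)$ and $L^2(\Gamma_u)$. Their factors, $\|\mathrm{div}[\cdot]\|_{L^2(\Omega)}$ and $\|\cdot\bullet\widehat{\mathbf{n}}\|_{L^2(\Gamma_u)}$, are components of the $H(\mathrm{div};\Omega,\Gamma_u)$ norm. Each of these terms is therefore bounded by $\|\mathbb{W}\|_{\mathcal{U}}\|\mathbb{U}\|_{\mathcal{U}}$ with constant $1$.

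\textbf{Step 3 (the $\Gamma_p$ term).} This is the only term not controlled directly by the $\mathcal{U}$ norm. I will apply Cauchy--Schwarz on $L^2(\Gamma_p)$ and then the trace inequality \eqref{Eqn:Klinkenberg_ptrace_inequality}. This gives
\[
\Big|\int_{\Gamma_p} q\,\mathcal{P}\,\mathrm{d}\Gamma\Big| \le C_{\mathrm{tr}}^2\,\|q\|_{H^1(\Omega)}\|\mathcal{P}\|_{H^1(\Omega)} \le C_{\mathrm{tr}}^2\,\|\mathbb{W}\|_{\mathcal{U}}\|\mathbb{U}\|_{\mathcal{U}}.
\]
This step is where assumption (i) on the Lipschitz boundary is genuinely used, and I expect it to be the only real obstacle. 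Even so, it is handled entirely by the trace theorem already recorded.

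\textbf{Step 4 (collecting constants).} Summing the four estimates yields the claim with
\[
C_B = c_1^2 + 2 + C_{\mathrm{tr}}^2,
\]
which is finite because $\mu>0$ and $0<k_{\min}\le k_{\max}<\infty$.
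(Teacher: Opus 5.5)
Your proposal is correct and follows the paper's proof essentially step for step: the same splitting of $\mathcal{B}$ into four integrals, Cauchy--Schwarz on each, the trace inequality \eqref{Eqn:Klinkenberg_ptrace_inequality} for the $\Gamma_p$ term, and a constant of the same form $C_B = C_1 + 2 + C_{\mathrm{tr}}^2$. The only differences are cosmetic: you apply Cauchy--Schwarz in the $\frac{1}{\mu}\mathbf{K}_0$-weighted inner product instead of pulling out $k_{\max}/\mu$ after an unweighted one, and you explicitly keep the factor $\sqrt{2}$ in $\|\mathbf{u}\|_{L^2(\Omega)} + \|\mathrm{grad}[\mathcal{P}]\|_{L^2(\Omega)} \le \sqrt{2}\,\|\mathbb{U}\|_{\mathcal{U}}$, which the paper's write-up skips when it bounds such sums directly by $\|\mathbb{U}\|_{\mathcal{U}}$ (harmlessly, since $\mu^2 k_{\min}^{-2} + 1 \le (\mu k_{\min}^{-1} + 1)^2$ means its $C_1$ still suffices).
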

%------------------------;
%  Proof of boundedness  ;
%------------------------;
\begin{proof} 
    We decompose the bilinear form as
    \begin{align}
        \mathcal{B}\big(\mathbb{W}(\mathbf{x});\mathbb{U}(\mathbf{x})\big)
        = \mathcal{I}_1 + \mathcal{I}_2 
        + \mathcal{I}_3 + \mathcal{I}_4 
    \end{align} 
    where
    \begin{align}
        \mathcal{I}_1 &:= \int_{\Omega}
        \Big(\mu \mathbf{K}_0^{-1}(\mathbf{x})\mathbf{w}(\mathbf{x}) 
        + \mathrm{grad}\big[q(\mathbf{x})\big] \Big)
        \bullet 
        \frac{1}{\mu}\mathbf{K}_0(\mathbf{x}) \, 
        \Big(\mu \mathbf{K}_0^{-1}(\mathbf{x}) \mathbf{u}(\mathbf{x}) 
        + \mathrm{grad}\big[\mathcal{P}(\mathbf{x})\big] \Big)\, \mathrm{d}\Omega \\
        \mathcal{I}_2 &:= \int_{\Omega} \mathrm{div}\big[\mathbf{w}(\mathbf{x})\big] \, \mathrm{div}\big[\mathbf{u}(\mathbf{x})\big] 
        \, \mathrm{d}\Omega\\
        \mathcal{I}_3 &:= \int_{\Gamma_u} \big(\mathbf{w}(\mathbf{x}) \bullet \widehat{\mathbf{n}}(\mathbf{x})\big) \, 
        \big(\mathbf{u}(\mathbf{x}) \bullet \widehat{\mathbf{n}}(\mathbf{x})\big) \, \mathrm{d}\Gamma \\
        \mathcal{I}_4 &:= \int_{\Gamma_p} q(\mathbf{x}) \,
        \mathcal{P}(\mathbf{x}) \, \mathrm{d}\Gamma 
    \end{align}

    \textit{Estimating $\mathcal{I}_1$.} Using the Cauchy--Schwarz inequality and invoking the bounds on $\mathbf{K}_0(\mathbf{x})$, we proceed as follows: 
    \begin{align}
          |\mathcal{I}_1|
          &\le
          \Big\|\mu \mathbf{K}_0^{-1}(\mathbf{x})\mathbf{w}(\mathbf{x}) 
          + \mathrm{grad}\big[q(\mathbf{x})\big] \Big\|_{L^2(\Omega)}
          \Big\|\tfrac{1}{\mu}\mathbf{K}_0(\mathbf{x})
          \big(\mu \mathbf{K}_0^{-1}(\mathbf{x})\mathbf{u}(\mathbf{x}) 
          + \mathrm{grad}[\mathcal{P}(\mathbf{x})]\big)
          \Big\|_{L^2(\Omega)} 
          \nonumber \\
          &\le \frac{k_{\max}}{\mu}
          \Big\|\mu \mathbf{K}_0^{-1}(\mathbf{x})\mathbf{w}(\mathbf{x})
          + \mathrm{grad}[q(\mathbf{x})] \Big\|_{L^2(\Omega)}
          \Big\|\mu \mathbf{K}_0^{-1}(\mathbf{x})\mathbf{u}(\mathbf{x}) 
          + \mathrm{grad}[\mathcal{P}(\mathbf{x})]
          \Big\|_{L^2(\Omega)} 
          \nonumber \\
          &\le \frac{k_{\max}}{\mu}
          \Big(\mu k_{\min}^{-1}(\mathbf{x}) \big\|\mathbf{w}(\mathbf{x})\big\|_{L^2(\Omega)} + \big\|\mathrm{grad}[q(\mathbf{x})]\big\|_{L^2(\Omega)}\Big)
          \Big(\mu k_{\min}^{-1}(\mathbf{x})\|\mathbf{u}(\mathbf{x})\|_{L^2(\Omega)} + \|\mathrm{grad}[\mathcal{P}(\mathbf{x})]\|_{L^2(\Omega)}\Big)
          \nonumber \\
          &\le C_1 
          \Big(\|\mathbf{w}(\mathbf{x})\|_{L^2(\Omega)} 
          + \|\mathrm{grad}[q(\mathbf{x})]\|_{L^2(\Omega)}\Big)
          \Big(\|\mathbf{u}(\mathbf{x})\|_{L^2(\Omega)} + \|\mathrm{grad}[\mathcal{P}(\mathbf{x})]\|_{L^2(\Omega)}\Big)
          \label{Eqn:Klinkenberg_Boundedness_intermediate_inequality}
    \end{align}
    where 
    \begin{align}
        C_1 := \frac{k_{\max}}{\mu}\big(\mu k_{\min}^{-1}+1\big)^2 
    \end{align}
    The continuous embeddings of the underlying function spaces imply the following norm inequalities:
    \begin{align}
        \|\mathbf{v}(\mathbf{x})\|_{L^2(\Omega)}
        \le \|\mathbf{v}(\mathbf{x})\|_{H(\mathrm{div};\Omega)}
        \le \|\mathbf{v}(\mathbf{x})\|_{H(\mathrm{div};\Omega,\Gamma_u)}
        \quad \mathrm{and} \quad 
        \|\mathrm{grad}[r(\mathbf{x})]\|_{L^2(\Omega)} \le \|r(\mathbf{x})\|_{H^1(\Omega)}
    \end{align}
    These estimates enable us to derive the following inequality from Eq.~\eqref{Eqn:Klinkenberg_Boundedness_intermediate_inequality}:
    \begin{align}
        |\mathcal{I}_1|
        \le C_1 \, \big\|\mathbb{W}(\mathbf{x})\big\|_{\mathcal{U}} \, \big\|\mathbb{U}(\mathbf{x})\big\|_{\mathcal{U}}
    \end{align}

    \emph{Estimating $I_2$.} Using the Cauchy--Schwarz inequality, we write
    \begin{align}
        |\mathcal{I}_2|
        \le \big\|\mathrm{div}[\mathbf{w}]\big\|_{L^2(\Omega)}\,
        \big\|\mathrm{div}[\mathbf{u}]\big\|_{L^2(\Omega)}
        \le \big\|\mathbf{w}\big\|_{H(\mathrm{div};\Omega)}\,
        \big\|\mathbf{u}\big\|_{H(\mathrm{div};\Omega)}
        \le \big\|\mathbb{W}\big\|_{\mathcal{U}} \, \big\|\mathbb{U}\big\|_{\mathcal{U}} 
    \end{align}

    \emph{Estimating $\mathcal{I}_3$.}
    Again using the Cauchy--Schwarz inequality on $\Gamma_u$, we establish
    \begin{align}
        |\mathcal{I}_3|
        \le \big\|\mathbf{w}(\mathbf{x}) \bullet \widehat{\mathbf{n}}(\mathbf{x})\big\|_{L^2(\Gamma_u)}\,
        \big\|\mathbf{u}(\mathbf{x}) \bullet \widehat{\mathbf{n}}(\mathbf{x})\big\|_{L^2(\Gamma_u)}
        \le \big\|\mathbb{W}(\mathbf{x})\big\|_{\mathcal{U}} \, \big\|\mathbb{U}(\mathbf{x})\big\|_{\mathcal{U}} 
    \end{align}

    \emph{Estimating $\mathcal{I}_4$.} By the Cauchy-Schwarz inequality and the trace inequality \eqref{Eqn:Klinkenberg_ptrace_inequality}, we write 
    \begin{align}
        |\mathcal{I}_4|
        \le \|q\|_{L^2(\Gamma_p)}\,\|\mathcal{P}\|_{L^2(\Gamma_p)}
        \le C_{\mathrm{tr}}^2\,
        \|q\|_{H^1(\Omega)}\,\|\mathcal{P}\|_{H^1(\Omega)}
        \le C_{\mathrm{tr}}^2\,
        \|\mathbb{W}\|_{\mathcal{U}}\,\|\mathbb{U}\|_{\mathcal{U}} 
    \end{align}

    Collecting the above estimates yields
    \begin{align}
        \big|\mathcal{B}\big(\mathbb{W}(\mathbf{x});\mathbb{U}(\mathbf{x})\big)\big|
        \le C_B\, \big\|\mathbb{W}(\mathbf{x})\big\|_{\mathcal{U}} \, \big\|\mathbb{U}(\mathbf{x})\big\|_{\mathcal{U}}
    \end{align}
    with 
    \begin{align}
        C_B := C_1 + 2 + C_{\mathrm{tr}}^2
    \end{align} 
    This proves that the bilinear form $\mathcal{B}(\cdot;\cdot)$ is bounded on
    $\mathcal{U}\times\mathcal{U}$.
\end{proof}

Since the bilinear form $\mathcal{B}(\cdot,\cdot)$ is coercive and continuous on $\mathcal{U}$, the Lax--Milgram theorem \citep{brezis2011functional} guarantees that the weak problem \eqref{Eqn:Klinkenberg_Bilinear_form} admits a unique solution $\mathbb{U}(\mathbf{x}) \in \mathcal{U}$. Moreover, the solution depends continuously on the data; in particular, there exists a constant $C > 0$ such that
\begin{align}
    \|\mathbb{U}(\mathbf{x})\|_{\mathcal{U}} \le C \, \|\ell\|_{\mathcal{U}^\ast}
\end{align}
Here, $\mathcal{U}^\ast$ denotes the dual space of $\mathcal{U}$ and $\|\cdot\|_{\mathcal{U}^\ast}$ is the associated dual norm.

\vspace{0.15in}
%========================================================;
%  Subsubsection: Strong convexity and quadratic growth  ;
%--------------------------------------------------------;
\subsubsection{Strong convexity and quadratic growth}
Building on the coercivity established above, we now prove that the functional 
$\Pi_{\mathrm{LS}}$ is strongly convex and derive the associated quadratic-growth estimate.

%===========================;
%  Lemma: Strong convexity  ;
%---------------------------;
\begin{lemma}[Strong convexity of $\Pi_{\mathrm{LS}}$]
    \label{Lemma:Klinkenberg_Strong_convexity_of_PiLS}
    $\Pi_{\mathrm{LS}}$ is $\alpha_0$-strongly convex on $\mathcal{U}$. That is, for all $\mathbb{U}, \mathbb{V} 
    \in \mathcal{U}$ and all $\theta\in[0,1]$, we have 
    \begin{equation}
    \label{eq:strong-convexity}
        \Pi_{\mathrm{LS}}\!\big[\theta \, \mathbb{U} 
        + (1-\theta) \, \mathbb{V} \big]
        \le \theta \, \Pi_{\mathrm{LS}}[\mathbb{U}]
        + (1-\theta) \, \Pi_{\mathrm{LS}}[\mathbb{V}]
        - \frac{\alpha_0}{2}\,\theta(1-\theta) \,
        \|\mathbb{U} - \mathbb{V}\|_{\mathcal{U}}^2
    \end{equation}
    %%%
\end{lemma}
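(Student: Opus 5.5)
The plan is to exploit the quadratic structure $\Pi_{\mathrm{LS}}[\mathbb{U}] = \tfrac12\mathcal{B}(\mathbb{U};\mathbb{U}) - l(\mathbb{U})$ together with the symmetry and bilinearity of $\mathcal{B}$. The key observation is that the linear part $l$ contributes nothing to the convexity defect, so the whole inequality reduces to an exact identity for the quadratic part, combined with the coercivity estimate of Theorem~\ref{Thm:Klinkenberg_Coercivity}.

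First, I would verify that $\mathcal{B}$ is symmetric, $\mathcal{B}(\mathbb{W};\mathbb{U}) = \mathcal{B}(\mathbb{U};\mathbb{W})$. This is immediate from Eq.~\eqref{Eqn:Klinkenberg_Bilinear_form}: the first integrand is symmetric because $\mathbf{K}_0$ is symmetric, so $\mathbf{a}\bullet\mu^{-1}\mathbf{K}_0\mathbf{b} = \mathbf{b}\bullet\mu^{-1}\mathbf{K}_0\mathbf{a}$, and the remaining three terms are plainly symmetric products. Boundedness (Theorem~\ref{Thm:Klinkenberg_Boundedness_of_bilinear_form}) ensures that all quantities are finite on $\mathcal{U}$, and $l$ is a bounded linear functional by the trace inequality.

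Next, set $\mathbb{Z} := \theta\mathbb{U} + (1-\theta)\mathbb{V}$. By linearity, $l(\mathbb{Z}) = \theta\, l(\mathbb{U}) + (1-\theta)\, l(\mathbb{V})$. For the quadratic part, expanding with bilinearity and symmetry gives the standard identity
\begin{align*}
\theta\,\mathcal{B}(\mathbb{U};\mathbb{U}) + (1-\theta)\,\mathcal{B}(\mathbb{V};\mathbb{V}) - \mathcal{B}(\mathbb{Z};\mathbb{Z})
= \theta(1-\theta)\,\mathcal{B}(\mathbb{U}-\mathbb{V};\mathbb{U}-\mathbb{V}).
\end{align*}
This follows from $\theta - \theta^2 = \theta(1-\theta)$ and $(1-\theta) - (1-\theta)^2 = \theta(1-\theta)$, with the cross term equal to $-2\theta(1-\theta)\,\mathcal{B}(\mathbb{U};\mathbb{V})$. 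Multiplying by $\tfrac12$ and subtracting the linear parts then yields
\begin{align*}
\theta\,\Pi_{\mathrm{LS}}[\mathbb{U}] + (1-\theta)\,\Pi_{\mathrm{LS}}[\mathbb{V}] - \Pi_{\mathrm{LS}}[\mathbb{Z}] = \tfrac12\,\theta(1-\theta)\,\mathcal{B}(\mathbb{U}-\mathbb{V};\mathbb{U}-\mathbb{V}).
\end{align*}

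Finally, since $\mathbb{U}-\mathbb{V}\in\mathcal{U}$ (a vector space), Theorem~\ref{Thm:Klinkenberg_Coercivity} gives $\mathcal{B}(\mathbb{U}-\mathbb{V};\mathbb{U}-\mathbb{V}) \ge \alpha_0\|\mathbb{U}-\mathbb{V}\|_{\mathcal{U}}^2$, and rearranging produces \eqref{eq:strong-convexity}. No step is a serious obstacle; the only point needing care is the symmetry of $\mathcal{B}$, which is where the symmetry of $\mathbf{K}_0$ (assumption (iii)) enters. Without symmetry, the cross terms would not combine into $\mathcal{B}(\mathbb{U}-\mathbb{V};\mathbb{U}-\mathbb{V})$.
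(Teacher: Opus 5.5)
Your reduction is correct and is exactly the paper's argument. Bilinearity and symmetry of $\mathcal{B}$ make $l$ drop out. You rightly trace the symmetry to that of $\mathbf{K}_0$, which the paper only names. This gives the exact identity
\[
\theta\,\Pi_{\mathrm{LS}}[\mathbb{U}]+(1-\theta)\,\Pi_{\mathrm{LS}}[\mathbb{V}]-\Pi_{\mathrm{LS}}\big[\theta\mathbb{U}+(1-\theta)\mathbb{V}\big]
=\tfrac12\,\theta(1-\theta)\,\mathcal{B}(\mathbb{U}-\mathbb{V};\mathbb{U}-\mathbb{V}),
\]
after which Theorem~\ref{Thm:Klinkenberg_Coercivity} is invoked. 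But the identity also shows (take $\mathbb{V}=0$) that the lemma is \emph{equivalent} to that coercivity estimate. So all of the content sits in the step you describe as unproblematic, and that step fails: Theorem~\ref{Thm:Klinkenberg_Coercivity} is false as stated. Consequently neither your proof nor the paper's establishes the lemma.

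\emph{Counterexample.} Let $\Omega$ be the upper half of the unit disk, $\Gamma_p$ the arc, $\Gamma_u$ the diameter, and $\mathbf{K}_0=k_0\mathbf{I}$; assumptions (i)--(iv) all hold. With $z=x+\mathrm{i}y$ and $j\ge 1$, put $\mathcal{P}_j=j^{-1/2}\,\mathrm{Re}(z^j)=j^{-1/2}r^j\cos(j\vartheta)$, $\mathbf{u}_j=-(k_0/\mu)\,\mathrm{grad}[\mathcal{P}_j]$ and $\mathbb{U}_j=(\mathcal{P}_j,\mathbf{u}_j)$. Then:
\begin{itemize}
\item $\mu\mathbf{K}_0^{-1}\mathbf{u}_j+\mathrm{grad}[\mathcal{P}_j]=\mathbf{0}$;
\item $\mathrm{div}[\mathbf{u}_j]=0$, since $\mathcal{P}_j$ is harmonic;
\item $\mathbf{u}_j\bullet\widehat{\mathbf{n}}=0$ on $\Gamma_u$, since $\partial_y\,\mathrm{Re}(z^j)=-j\,\mathrm{Im}(z^{j-1})$ vanishes on the real axis.
\end{itemize}
Hence $\mathcal{B}(\mathbb{U}_j;\mathbb{U}_j)=\|\mathcal{P}_j\|^2_{L^2(\Gamma_p)}=\pi/(2j)$, whereas $\|\mathbb{U}_j\|^2_{\mathcal{U}}\ge\|\mathrm{grad}[\mathcal{P}_j]\|^2_{L^2(\Omega)}=\pi/2$. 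With $\theta=\tfrac12$, $\mathbb{U}=\mathbb{U}_j$, $\mathbb{V}=0$, the asserted inequality therefore fails as soon as $j>1/\alpha_0$.

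The root cause is that an $L^2(\Gamma_p)$ boundary mismatch cannot control $\|\mathcal{P}\|_{H^1(\Omega)}$. That needs an $H^{1/2}(\Gamma_p)$-type norm, or the Dirichlet condition built into the trial space. The paper's proof of Theorem~\ref{Thm:Klinkenberg_Coercivity} breaks at the $H(\mathrm{div})$ Friedrichs inequality \eqref{Eqn:Klinkenberg_Friedrichs_inequality_hdiv}, which is false: here a constant horizontal field is divergence-free with zero normal trace on $\Gamma_u$.

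What your identity does prove, unconditionally, is $1$-strong convexity with respect to the energy norm $\mathcal{B}(\cdot;\cdot)^{1/2}$. That norm is strictly weaker than $\|\cdot\|_{\mathcal{U}}$.
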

%---------------;
%  Proof of QG  ;
%---------------;
\begin{proof}
    The $\alpha_0$-strong convexity is a direct consequence symmetry, bilinearity, and coercivity of $\mathcal{B}(\cdot;\cdot)$. To wit, for $\mathbb{U}, \mathbb{V} \in \mathcal{U}$ and $\theta\in[0,1]$ we have
    \begin{align}
        \theta \, \Pi_{\mathrm{LS}}[\mathbb{U}]
        + (1-\theta) \, \Pi_{\mathrm{LS}}[\mathbb{V}] 
        - \Pi_{\mathrm{LS}}\!\big[\theta \, \mathbb{U} 
        + (1-\theta) \, \mathbb{V}\big]
        = \frac{1}{2}\,\theta(1-\theta)\,
        \mathcal{B}(\mathbb{U} - \mathbb{V};\mathbb{U} - \mathbb{V})
    \end{align}
    By coercivity (see Theorem \eqref{Thm:Klinkenberg_Coercivity}), we write 
    \begin{align}
        \frac{1}{2}\,\theta(1-\theta)\,
        \mathcal{B}(\mathbb{U} - \mathbb{V};\mathbb{U} - \mathbb{V})
        \ge \frac{\alpha_0}{2}\,\theta(1-\theta) \, 
        \|\mathbb{U} - \mathbb{V}\|_{\mathcal{U}}^2
    \end{align}
    which is equivalent to \eqref{eq:strong-convexity}. 
    %%%
\end{proof}

%==================================;
%  Lemma: Unique global minimizer  ;
%----------------------------------;
\begin{lemma}[Unique global minimizer $\mathbb{U}^{\star}$]
    \label{Lemma:Klinkenberg_Unique_global_minimizer}
    $\Pi_{\mathrm{LS}}$ has a unique minimizer $\mathbb{U}^\star\in \mathcal{U}$.
\end{lemma}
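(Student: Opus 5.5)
Existence follows from the Lax--Milgram setting already in place, and uniqueness from the strong convexity of Lemma~\ref{Lemma:Klinkenberg_Strong_convexity_of_PiLS}. For existence I would first check that $l$ is a bounded linear functional on $\mathcal{U}$. By Cauchy--Schwarz on $\Gamma_u$ and $\Gamma_p$, together with the trace inequality \eqref{Eqn:Klinkenberg_ptrace_inequality}, we get
\begin{align*}
  |l(\mathbb{W})| \le \big(\|u_n\|_{L^2(\Gamma_u)} + C_{\mathrm{tr}}\|\mathcal{P}_{\mathrm{p}}\|_{L^2(\Gamma_p)}\big)\,\|\mathbb{W}\|_{\mathcal{U}}.
\end{align*}
This requires the data to satisfy $u_n\in L^2(\Gamma_u)$ and $\mathcal{P}_{\mathrm{p}}\in L^2(\Gamma_p)$; the latter holds since $p_{\mathrm{p}}\ge p_{\min}>0$ (Remark~\ref{Remark:Klinkenberg_pBC_positive}) and $p_{\mathrm{p}}\in L^2(\Gamma_p)$, so the logarithm is bounded below and grows at most linearly. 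The form $\mathcal{B}$ is symmetric by inspection, bounded by Theorem~\ref{Thm:Klinkenberg_Boundedness_of_bilinear_form}, and coercive by Theorem~\ref{Thm:Klinkenberg_Coercivity}. Lax--Milgram (or Riesz representation in the equivalent inner product $\mathcal{B}$) therefore gives a unique $\mathbb{U}^\star\in\mathcal{U}$ with $\mathcal{B}(\mathbb{W};\mathbb{U}^\star)=l(\mathbb{W})$ for all $\mathbb{W}\in\mathcal{U}$.

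Next I would show that this $\mathbb{U}^\star$ minimizes $\Pi_{\mathrm{LS}}$. Expanding for arbitrary $\mathbb{V}\in\mathcal{U}$ and using symmetry and bilinearity,
\begin{align*}
  \Pi_{\mathrm{LS}}[\mathbb{U}^\star+\mathbb{V}] = \Pi_{\mathrm{LS}}[\mathbb{U}^\star] + \big(\mathcal{B}(\mathbb{V};\mathbb{U}^\star)-l(\mathbb{V})\big) + \tfrac12\mathcal{B}(\mathbb{V};\mathbb{V}).
\end{align*}
The middle term vanishes by the weak equation, and the last term is at least $\tfrac{\alpha_0}{2}\|\mathbb{V}\|_{\mathcal{U}}^2$. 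This yields the quadratic-growth estimate
\begin{align*}
  \Pi_{\mathrm{LS}}[\mathbb{U}]-\Pi_{\mathrm{LS}}[\mathbb{U}^\star]\ge\tfrac{\alpha_0}{2}\|\mathbb{U}-\mathbb{U}^\star\|_{\mathcal{U}}^2 .
\end{align*}
Hence $\mathbb{U}^\star$ is a minimizer, and any other minimizer $\mathbb{U}$ must satisfy $\|\mathbb{U}-\mathbb{U}^\star\|_{\mathcal{U}}=0$.

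As an independent check of uniqueness, suppose $\mathbb{U}_1\neq\mathbb{U}_2$ both attain the infimum $m$. Taking $\theta=\tfrac12$ in Lemma~\ref{Lemma:Klinkenberg_Strong_convexity_of_PiLS} gives
\begin{align*}
  \Pi_{\mathrm{LS}}\big[\tfrac12(\mathbb{U}_1+\mathbb{U}_2)\big]\le m-\tfrac{\alpha_0}{8}\|\mathbb{U}_1-\mathbb{U}_2\|_{\mathcal{U}}^2<m,
\end{align*}
which contradicts the definition of $m$.

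The only delicate step is the boundedness of $l$, since it depends on regularity of the data that the paper has not stated explicitly. I would make the assumptions $u_n\in L^2(\Gamma_u)$ and $p_{\mathrm{p}}\in L^2(\Gamma_p)$ explicit at the start of the proof. If one prefers to avoid Lax--Milgram altogether, existence can instead be obtained directly. In that route, $\Pi_{\mathrm{LS}}$ is bounded below by $\tfrac{\alpha_0}{2}\|\mathbb{U}\|^2-\|l\|\,\|\mathbb{U}\|$. For a minimizing sequence, the midpoint inequality of Lemma~\ref{Lemma:Klinkenberg_Strong_convexity_of_PiLS} shows the sequence is Cauchy. Its limit lies in $\mathcal{U}$ because $\mathcal{U}$ is a Hilbert space, and the limit is the minimizer because $\Pi_{\mathrm{LS}}$ is continuous, which follows from the boundedness of $\mathcal{B}$ and $l$.
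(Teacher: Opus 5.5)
Relative to the results the paper has already stated, your argument is correct, and it proves more than the paper's own proof does. The paper's proof is exactly your ``independent check'': it takes $\theta=\tfrac12$ in Lemma~\ref{Lemma:Klinkenberg_Strong_convexity_of_PiLS} and concludes that two minimizers must coincide. It therefore establishes only uniqueness and tacitly assumes that a minimizer exists. The paper asserts, but never shows in this proof, that the Lax--Milgram solution is the minimizer.

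You supply the missing existence half. You check that $l$ is bounded, making explicit the data assumptions the paper omits. You then obtain the weak solution from Lax--Milgram and identify it as the minimizer by expanding $\Pi_{\mathrm{LS}}$ about it. That expansion is the energy identity the paper derives only later, in Step~1 of Theorem~\ref{Thm:Klinkenberg_Bias_gap_bound}, where it simply asserts that $\mathbb{U}^\star$ solves the normal equations. Placing it here closes that loop and gives Lemma~\ref{Lemma:Klinkenberg_QG_of_PiLS} directly, without the $\theta\downarrow 0$ limit. Your minimizing-sequence alternative is also valid; it is the natural way to get attainment from strong convexity and completeness.

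One caution, which affects the paper and your proposal equally: Theorem~\ref{Thm:Klinkenberg_Coercivity} does not actually hold when the pressure trace is penalized only in $L^2(\Gamma_p)$. Take $\Omega=(0,1)^2$, $\mathbf{K}_0=\mathbf{I}$, $\mu=1$, $\Gamma_p=\{y=0\}$, and let $\mathcal{P}_n=n^{-1/2}\cos(n\pi x)\cosh\big(n\pi(1-y)\big)/\cosh(n\pi)$ with $\mathbf{u}_n=-\mathrm{grad}[\mathcal{P}_n]$. Then $\mathbf{R}_1=\mathbf{0}$, $\mathrm{div}[\mathbf{u}_n]=0$, and $\mathbf{u}_n\bullet\widehat{\mathbf{n}}=0$ on $\Gamma_u$. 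Hence $\mathcal{B}(\mathbb{U}_n;\mathbb{U}_n)=1/(2n)$, whereas $\|\mathrm{grad}[\mathcal{P}_n]\|^2_{L^2(\Omega)}=\tfrac{\pi}{2}\tanh(n\pi)$, so no $\alpha_0>0$ works. In the paper's proof, the failing step is the Friedrichs inequality \eqref{Eqn:Klinkenberg_Friedrichs_inequality_hdiv}, which every nonzero divergence-free field compactly supported in $\Omega$ violates.

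Consequently, the data assumptions you propose to make explicit, $u_n\in L^2(\Gamma_u)$ and $p_{\mathrm{p}}\in L^2(\Gamma_p)$, suffice only because of that theorem. Suppose $\mathcal{P}_{\mathrm{p}}$ is not the restriction to $\Gamma_p$ of the trace of some $H^1(\Omega)$ function. Then the nonnegative residual functional $\Pi_{\mathrm{LS}}+\tfrac12\|u_n\|^2_{L^2(\Gamma_u)}+\tfrac12\|\mathcal{P}_{\mathrm{p}}\|^2_{L^2(\Gamma_p)}$ has infimum zero: approximate $\mathcal{P}_{\mathrm{p}}$ in $L^2(\Gamma_p)$ by smooth data and solve exactly. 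But it cannot vanish on $\mathcal{U}$, so no minimizer exists.

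A sound version of the lemma assumes that trace condition. It holds whenever $p_{\mathrm{p}}\ge p_{\min}$ satisfies it, since the logarithm is Lipschitz on $[p_{\min},\infty)$. Existence then follows because the exact solution of the transformed problem lies in $\mathcal{U}$ and annihilates every residual. Uniqueness follows from your midpoint identity using only $\mathcal{B}(\mathbb{V};\mathbb{V})>0$ for $\mathbb{V}\neq\mathbf{0}$, which a Green's-formula argument gives without any coercivity constant.
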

%------------------------------------;
%  Proof of unique global minimizer  ;
%------------------------------------;
\begin{proof}
    Suppose that $\mathbb{U}_1^\star$ and $\mathbb{U}_2^\star$ are two distinct minimizers. Applying the strong convexity estimate \eqref{eq:strong-convexity} with $\theta=\tfrac12$, $\mathbb{U}=\mathbb{U}_1^\star$, $\mathbb{V}=\mathbb{U}_2^\star$ gives
    \begin{align}
        \Pi_{\mathrm{LS}}\!\left[
        \tfrac12(\mathbb{U}_1^\star+\mathbb{U}_2^\star)
        \right]
        \le
        \frac12 \Pi_{\mathrm{LS}}[\mathbb{U}_1^\star]
        + \frac12 \Pi_{\mathrm{LS}}[\mathbb{U}_2^\star]
        - \frac{\alpha_0}{8}
        \|\mathbb{U}_1^\star-\mathbb{U}_2^\star\|_{\mathcal{U}}^2
    \end{align}
    Because both $\mathbb{U}_1^\star$ and $\mathbb{U}_2^\star$ are minimizers, they have the same minimal value:
    \begin{align}
        \Pi_{\mathrm{LS}}[\mathbb{U}_1^\star]
        = \Pi_{\mathrm{LS}}[\mathbb{U}_2^\star]
    \end{align}
    Hence, we have 
    \begin{align}
        \Pi_{\mathrm{LS}}[\bar{\mathbb{U}}]
        \le
        \Pi_{\mathrm{LS}}[\mathbb{U}_1^\star]
        - \frac{\alpha_0}{8}
        \|\mathbb{U}_1^\star-\mathbb{U}_2^\star\|_{\mathcal{U}}^2
    \end{align}
    If $\mathbb{U}_1^\star \neq \mathbb{U}_2^\star$, then
    $\|\mathbb{U}_1^\star-\mathbb{U}_2^\star\|_{\mathcal{U}}^2 > 0$,
    and the right-hand side is strictly smaller than
    $\Pi_{\mathrm{LS}}(\mathbb{U}_1^\star)$.
    This contradicts the minimality of 
    $\mathbb{U}_1^\star$.
    Therefore,
    \begin{align}
        \|\mathbb{U}_1^\star-\mathbb{U}_2^\star\|_{\mathcal{U}}^2 = 0
    \end{align}
    which further implies $\mathbb{U}_1^\star=\mathbb{U}_2^\star$.
    %%%
\end{proof}

%===========================;
%  Lemma: Quadratic growth  ;
%---------------------------;
\begin{lemma}[Quadratic growth (QG) of $\Pi_{\mathrm{LS}}$]
    \label{Lemma:Klinkenberg_QG_of_PiLS}
    The functional $\Pi_{\mathrm{LS}}$ satisfies the quadratic-growth estimate about its global minimizer $\mathbb U^\star \in \mathcal U$:
    \begin{equation}
        \label{eq:quadratic-growth}
        \Pi_{\mathrm{LS}}[\mathbb U] 
        - \Pi_{\mathrm{LS}}[\mathbb U^\star]
        \ge
        \frac{\alpha_0}{2}
        \|\mathbb U-\mathbb U^\star\|_{\mathcal U}^2,
        \qquad
        \forall\, \mathbb U \in \mathcal U
    \end{equation}
\end{lemma}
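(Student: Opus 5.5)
The plan is to derive the quadratic-growth estimate \eqref{eq:quadratic-growth} from the exact quadratic structure of $\Pi_{\mathrm{LS}}$, together with the first-order optimality condition at $\mathbb U^\star$ and the coercivity of Theorem~\ref{Thm:Klinkenberg_Coercivity}.

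\textbf{Step 1 (expansion).} Since $\mathcal B(\cdot;\cdot)$ is bilinear and symmetric, and $l$ is linear, for any $\mathbb U\in\mathcal U$ we can set $\mathbb E:=\mathbb U-\mathbb U^\star$ and expand:
\begin{align*}
\Pi_{\mathrm{LS}}[\mathbb U^\star+\mathbb E]
=\Pi_{\mathrm{LS}}[\mathbb U^\star]
+\big(\mathcal B(\mathbb E;\mathbb U^\star)-l(\mathbb E)\big)
+\tfrac12\,\mathcal B(\mathbb E;\mathbb E).
\end{align*}
Symmetry of $\mathcal B$ follows by inspection of \eqref{Eqn:Klinkenberg_Bilinear_form}, using the symmetry of $\mathbf K_0$.

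\textbf{Step 2 (the linear term vanishes).} We must show $\mathcal B(\mathbb W;\mathbb U^\star)=l(\mathbb W)$ for all $\mathbb W\in\mathcal U$. Here $\mathbb U^\star$ exists and is unique by Lemma~\ref{Lemma:Klinkenberg_Unique_global_minimizer}; existence is also supplied by Lax--Milgram, using Theorems~\ref{Thm:Klinkenberg_Coercivity} and~\ref{Thm:Klinkenberg_Boundedness_of_bilinear_form}. Consider the scalar function $t\mapsto\Pi_{\mathrm{LS}}[\mathbb U^\star+t\mathbb W]$. By the expansion in Step 1 it is a quadratic polynomial in $t$, and it is minimized at $t=0$. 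Its derivative at $t=0$ is therefore zero, which gives exactly the weak problem. This is the only step that needs care: we must make sure the minimizer of $\Pi_{\mathrm{LS}}$ coincides with the Lax--Milgram solution. The variational argument shows this directly. Alternatively, one could take the Lax--Milgram solution and show that it minimizes, by the same expansion.

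\textbf{Step 3 (coercivity).} With the linear term gone, Step 1 reduces to $\Pi_{\mathrm{LS}}[\mathbb U]-\Pi_{\mathrm{LS}}[\mathbb U^\star]=\tfrac12\mathcal B(\mathbb E;\mathbb E)$. Applying Theorem~\ref{Thm:Klinkenberg_Coercivity} gives $\tfrac12\mathcal B(\mathbb E;\mathbb E)\ge\tfrac{\alpha_0}{2}\|\mathbb U-\mathbb U^\star\|_{\mathcal U}^2$, which is the claimed estimate.

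\textbf{Alternative route.} One could instead use Lemma~\ref{Lemma:Klinkenberg_Strong_convexity_of_PiLS} with $\mathbb V=\mathbb U^\star$ and $\theta\to0^+$. Minimality gives $\Pi_{\mathrm{LS}}[\mathbb U^\star]\le\Pi_{\mathrm{LS}}[\theta\mathbb U+(1-\theta)\mathbb U^\star]$. Rearranging and dividing by $\theta$ yields $\Pi_{\mathrm{LS}}[\mathbb U]-\Pi_{\mathrm{LS}}[\mathbb U^\star]\ge\frac{\alpha_0}{2}(1-\theta)\|\mathbb U-\mathbb U^\star\|^2_{\mathcal U}$, and letting $\theta\to0$ gives the result. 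This route avoids the optimality condition entirely, so it can serve as a cross-check of Steps 1--3.
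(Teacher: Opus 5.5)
Your argument is correct given the paper's earlier results, but your main route is not the paper's. The paper's proof is exactly your ``alternative route'': Lemma~\ref{Lemma:Klinkenberg_Strong_convexity_of_PiLS} with $\mathbb V=\mathbb U^\star$, minimality of $\mathbb U^\star$ on the convex set $\mathcal U$, division by $\theta$, and $\theta\downarrow 0$.

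Your Steps 1--3 instead obtain the normal equations $\mathcal B(\mathbb W;\mathbb U^\star)=l(\mathbb W)$ from the vanishing derivative of the quadratic $t\mapsto\Pi_{\mathrm{LS}}[\mathbb U^\star+t\mathbb W]$. From these you get the exact energy identity $\Pi_{\mathrm{LS}}[\mathbb U]-\Pi_{\mathrm{LS}}[\mathbb U^\star]=\tfrac12\,\mathcal B(\mathbb U-\mathbb U^\star;\mathbb U-\mathbb U^\star)$. The paper derives this identity only later, in Step~1 of the proof of Theorem~\ref{Thm:Klinkenberg_Bias_gap_bound}.

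Your route needs slightly more structure: symmetry of $\mathcal B$, and perturbations of both signs of $t$, which are available because $\mathcal U$ is a linear space. In return it gives an equality rather than a one-sided bound, so the matching upper bound from Theorem~\ref{Thm:Klinkenberg_Boundedness_of_bilinear_form} comes for free. It also explicitly identifies the minimizer with the Lax--Milgram solution. That matters, because the proof of Lemma~\ref{Lemma:Klinkenberg_Unique_global_minimizer} shows only uniqueness, not existence. The paper's argument needs neither differentiation nor an exact expansion, so it carries over verbatim to any strongly convex functional minimized over a convex set.

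One caveat applies equally to your proof and the paper's. Your identity shows that the lemma with constant $\alpha_0$ is \emph{equivalent} to the coercivity bound of Theorem~\ref{Thm:Klinkenberg_Coercivity}, and that bound does not hold in $\|\cdot\|_{\mathcal U}$. Take $\Omega$ the unit disk, $\Gamma_p=\partial\Omega$, $\mu=1$, $\mathbf K_0=\mathbf I$, $\mathcal P_k=r^k\cos(k\vartheta)$ and $\mathbf u_k=-\mathrm{grad}[\mathcal P_k]$. For $\mathbb E_k=(\mathcal P_k,\mathbf u_k)$ one gets $\mathcal B(\mathbb E_k;\mathbb E_k)=\|\mathcal P_k\|^2_{L^2(\partial\Omega)}=\pi$, whereas $\|\mathbb E_k\|^2_{\mathcal U}\ge 2\,\|\mathrm{grad}[\mathcal P_k]\|^2_{L^2(\Omega)}=2\pi k$. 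With constant data $\mathcal P_{\mathrm p}\equiv c$, so that $\mathbb U^\star=(c,\mathbf 0)$, the point $\mathbb U^\star+\mathbb E_k$ violates the quadratic-growth estimate once $k>1/(2\alpha_0)$.

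Two things cause this failure:
\begin{itemize}
\item an $L^2(\Gamma_p)$ penalty cannot control the $H^{1/2}$ part of the pressure trace;
\item inequality \eqref{Eqn:Klinkenberg_Friedrichs_inequality_hdiv}, used in the coercivity proof, fails for every nonzero divergence-free field with vanishing normal trace on $\Gamma_u$ (here $\mathbf u_k$ itself).
\end{itemize}
This is not an error in your reasoning. Still, neither argument establishes the lemma as stated; it needs a weaker norm, an $H^{1/2}$-type boundary term, or boundary conditions built into the space.
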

%---------------;
%  Proof of QG  ;
%---------------;
\begin{proof}
    Let $\mathbb{U}^\star \in \mathcal{U}$ be a minimizer of $\Pi_{\mathrm{LS}}$ over $\mathcal{U}$. Fix any $\mathbb{U} \in \mathcal{U}$ and $\theta \in (0,1)$. Applying \eqref{eq:strong-convexity} with $\mathbb{V} = \mathbb{U}^\star$ yields
    \begin{align}
        \Pi_{\mathrm{LS}}\!\big[\theta \mathbb{U}
        + (1-\theta)\mathbb{U}^\star \big]
        \le
        \theta \Pi_{\mathrm{LS}}[\mathbb{U}]
        + (1-\theta)\Pi_{\mathrm{LS}}[\mathbb{U}^\star]
        - \frac{\alpha_0}{2}\theta(1-\theta)
        \|\mathbb{U}-\mathbb{U}^\star\|_{\mathcal{U}}^2 
    \end{align}
    Since $\mathcal{U}$ is convex, $\theta \mathbb{U} + (1-\theta)\mathbb{U}^\star \in \mathcal{U}$, and by optimality of $\mathbb{U}^\star$ we have
    \begin{align}
        \Pi_{\mathrm{LS}}[\mathbb{U}^\star]
        \le
        \Pi_{\mathrm{LS}}\!\big[\theta \mathbb{U}
        + (1-\theta)\mathbb{U}^\star \big]
    \end{align}
    Combining the two inequalities gives
    \begin{align}
        \Pi_{\mathrm{LS}}[\mathbb{U}^\star]
        &\le
        \theta \Pi_{\mathrm{LS}}[\mathbb{U}]
        + (1-\theta)\Pi_{\mathrm{LS}}[\mathbb{U}^\star]
        - \frac{\alpha_0}{2}\theta(1-\theta)
        \|\mathbb{U}-\mathbb{U}^\star\|_{\mathcal{U}}^2
    \end{align}
    Rearranging and dividing by $\theta>0$ yields
    \begin{align}
        \Pi_{\mathrm{LS}}[\mathbb{U}]
        - \Pi_{\mathrm{LS}}[\mathbb{U}^\star]
        \ge
        \frac{\alpha_0}{2}(1-\theta)
        \|\mathbb{U}-\mathbb{U}^\star\|_{\mathcal{U}}^2
    \end{align}
    Letting $\theta \downarrow 0$ gives the quadratic-growth estimate
    \begin{align}
        \Pi_{\mathrm{LS}}[\mathbb{U}]
        - \Pi_{\mathrm{LS}}[\mathbb{U}^\star]
        \ge
        \frac{\alpha_0}{2}
        \|\mathbb{U}-\mathbb{U}^\star\|_{\mathcal{U}}^2,
        \qquad
        \forall \mathbb{U} \in \mathcal{U}
    \end{align}
    %%%
\end{proof}

%================================================================;
%  Subsection: Quantify approximation and discretization errors  ;
%================================================================;
\subsection{Quantifying approximation and discretization errors}
%%%
Having established well-posedness and stability of the continuous least-squares problem, we now quantify the two mechanisms that separate the trained DeepLS solution from the target solution $\mathbb U^\star$. 
The first source of error arises from restricting the optimization problem to the neural network class $\mathcal N$, leading to an approximation (bias) error. 
The second source stems from replacing the continuous least-squares functional $\Pi_{\mathrm{LS}}$ by its empirical counterpart $\Pi_M$, leading to a discretization (or statistical) error. In this subsection, we estimate each contribution separately at the level of the objective functional. 
These bounds will later be combined with the quadratic-growth property to obtain convergence in the $\mathcal U$ norm.

The comparison between the continuous and discrete minimizers relies on controlling the discrepancy between the continuous least-squares functional $\Pi_{\mathrm{LS}}$ and its empirical approximation $\Pi_M$. In particular, we require a uniform bound 
on this discrepancy over the network class $\mathcal{N}$. Such a bound ensures that the discrete objective provides a reliable approximation of the continuous one and forms the key ingredient in relating the trained network solution to the continuous deep least-squares minimizer.

%==========================================;
%  Lemma: Uniform perturbation (UP) bound  ;
%------------------------------------------;
\begin{lemma}[Uniform perturbation (UP) bound on network class $\mathcal{N}$]
    \label{Lemma:Klinkenberg_UP_bound}
    For every $\delta\in(0,1)$ there exists $\varepsilon_{\mathrm{disc}}(M,\delta)$ with
    $\varepsilon_{\mathrm{disc}}(M,\delta)\to 0$ as $M\to\infty$ such that
    \begin{equation}
        \label{Eqn:Klinkenberg_UP_bound}
        \sup_{(\mathcal{P},\mathbf{u})\in\mathcal{N}}
        \Big|\Pi_{\mathrm{LS}}[\mathcal{P},\mathbf{u}]
        -\Pi_{M}[\mathcal{P},\mathbf{u}]\Big|
        \le \varepsilon_{\mathrm{disc}}(M,\delta)
    \end{equation}
    with probability at least $1-\delta$.
\end{lemma}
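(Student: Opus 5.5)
The plan is to view the gap $\Pi_{\mathrm{LS}}-\Pi_M$ as the supremum of an empirical process indexed by the network class $\mathcal N$, and to control it with a uniform concentration bound built from symmetrization and Rademacher complexity. For this to make sense, $\mathcal N$ must be a \emph{bounded} class. I would therefore assume that the weights are bounded, say by $B$, and that depth and width are fixed for each $M$, so that $\mathcal N=\mathcal N(L,W,B)$. With ReLU activations I would also replace the activation by a smooth surrogate, or work with networks whose derivatives are a.e.\ defined and bounded. Under these assumptions, $\mathcal{P}_{\boldsymbol\theta}$, $\operatorname{grad}\mathcal{P}_{\boldsymbol\theta}$, $\mathbf u_{\boldsymbol\theta}$ and $\operatorname{div}\mathbf u_{\boldsymbol\theta}$ are all uniformly bounded on $\overline\Omega$. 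Together with assumption (iii) on $\mathbf K_0$ and boundedness of the data $u_n$ and $\mathcal P_{\mathrm p}$ (the latter is bounded because $p_{\mathrm p}\ge p_{\min}>0$, by Remark~\ref{Remark:Klinkenberg_pBC_positive}), this gives a constant $R$ with $0\le r(\mathbf x)\le R$ for every squared residual integrand $r\in\{|\mu^{-1/2}\sqrt{\mathbf K_0}\mathbf R_1|^2,R_2^2,R_3^2,R_4^2\}$.

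Next I split the discrepancy into the four pieces $\Pi_{\mathrm{LS}}-\Pi_M=\sum_{m=1}^4(\Pi_m-\widehat\Pi_m)$, absorbing the geometric measures $|\Omega|,|\Gamma_u|,|\Gamma_p|$ into the weights. Each piece has the form $\mathbb E[f(X)]-\frac1{N}\sum_i f(X_i)$, where the $X_i$ are i.i.d.\ uniform samples on the relevant set and $f$ ranges over a class $\mathcal F_m$ induced by $\mathcal N$. Taking $N_\Omega,N_u,N_p\gtrsim M$, I would apply McDiarmid's bounded-difference inequality to $\sup_{\mathcal F_m}|\cdot|$. Since each $f$ takes values in $[0,R]$, changing one sample moves the supremum by at most $R/N$. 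This gives
\[
\sup_{f\in\mathcal F_m}\Big|\mathbb E f-\tfrac1N\textstyle\sum_i f(X_i)\Big|\le 2\,\mathfrak R_N(\mathcal F_m)+R\sqrt{\tfrac{\ln(8/\delta)}{2N}}
\]
with probability at least $1-\delta/4$. A union bound over $m$ then yields the statement with $\varepsilon_{\mathrm{disc}}(M,\delta)=\sum_m\lambda_m[\cdots]$.

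The main obstacle is showing that $\mathfrak R_N(\mathcal F_m)\to0$, because the functions in $\mathcal F_m$ are squares of residuals that involve derivatives of the networks. My first attempt would go as follows. The map $t\mapsto t^2$ is $2\sqrt R$-Lipschitz on the bounded range, so Talagrand's contraction lemma reduces the problem to the Rademacher complexity of the residual classes themselves. Each residual is a linear combination of the network outputs, their first derivatives and bounded coefficients. The outputs and their derivatives are Lipschitz in the parameters $\boldsymbol\theta$ over the compact parameter ball. A covering-number argument via Dudley's entropy integral then gives $\mathfrak R_N\lesssim R\sqrt{n_{\boldsymbol\theta}\ln(N\,\mathrm{Lip}\,B)/N}$, where $n_{\boldsymbol\theta}$ is the number of parameters. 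This tends to zero as $M\to\infty$ for a fixed class, or more generally whenever the capacity grows slowly relative to $M$. If derivative control for genuine ReLU networks turns out to be problematic, I would fall back on a Fourier-feature/$\tanh$ trunk, consistent with the architecture of \eqref{Eqn:Fourier_features}.
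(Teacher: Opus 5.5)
Your route is the one the paper takes. The paper's proof is a one-line appeal to a uniform law of large numbers over $\mathcal N_p\times\mathcal N_u$ (van der Vaart--Wellner, or SAA stability theory), and your symmetrization/McDiarmid/union-bound argument is what that citation leaves implicit; that part is fine.

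Your restriction to a fixed architecture with weights bounded by $B$ is not just a convenience: it is necessary, and the paper omits it. For unrestricted ReLU classes, hold $\mathbf u$ fixed and take a ridge bump $c\,h(\mathbf w\bullet\mathbf x)$ whose support is a thin slab avoiding the finitely many collocation points. Then $\Pi_M$ is independent of $c$, while $\Pi_{\mathrm{LS}}\to\infty$ as $c\to\infty$. So the supremum in the lemma is $+\infty$ for every $M$ and every sample, and the cited Glivenko--Cantelli results apply only once an envelope like your $R$ exists.

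The step that fails for the paper's class is the covering argument. You assert that the networks \emph{and their derivatives} are Lipschitz in $\boldsymbol\theta$, but for ReLU networks the derivatives are not even continuous in $\boldsymbol\theta$. For $\mathcal P_{\boldsymbol\theta}(x)=a\,\mathrm{ReLU}(wx+b)$, the derivative $\partial_x\mathcal P_{\boldsymbol\theta}$ jumps by $aw$ as $b$ crosses $-wx$. Hence a parameter grid gives no sup-norm cover of $\{\mathrm{grad}[\mathcal P_{\boldsymbol\theta}]\}$ or $\{\mathrm{div}[\mathbf u_{\boldsymbol\theta}]\}$, and Dudley's integral cannot be fed this way. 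Your $\tanh$ fallback proves the lemma for a different class. That is defensible given the paper's remark on activations, but it does not cover the ReLU networks the paper defines and uses.

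The repair for ReLU is combinatorial rather than metric. For fixed $\mathbf x$, each squared residual is a piecewise-polynomial function of $\boldsymbol\theta$. Its pieces are cut out by sign conditions on the pre-activations, which are themselves polynomials in $\boldsymbol\theta$. Goldberg--Jerrum-type counting then gives each $\mathcal F_m$ a pseudo-dimension polynomial in $n_{\boldsymbol\theta}$. With the envelope $R$, this yields $\mathfrak R_N(\mathcal F_m)\lesssim R\sqrt{\mathrm{Pdim}(\mathcal F_m)\ln N/N}$ directly. No contraction step is needed, which also removes the issue that contraction for the vector residual $\mathbf R_1$ must be applied componentwise.

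Two smaller points:
\begin{itemize}
\item Bounding $\mathcal P_{\mathrm p}$ needs an upper bound $p_{\mathrm p}\le p_{\max}$, not only $p_{\mathrm p}\ge p_{\min}$.
\item Your split $\sum_m(\Pi_m-\widehat\Pi_m)$ uses the sum-of-squares functional from which $\widehat\Pi_{\mathrm{LS}}$ is built. The convergence-analysis definition $\tfrac12\mathcal B-l$ differs from it by the constant $\tfrac12\|u_n\|^2_{L^2(\Gamma_u)}+\tfrac12\|\mathcal P_{\mathrm p}\|^2_{L^2(\Gamma_p)}$. So $\Pi_M$ must be matched to whichever form is used, with fixed rather than adaptive weights $\lambda_m$.
\end{itemize}
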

%---------------------;
%  Proof of UP bound  ;
%---------------------;
\begin{proof}
This is a standard uniform law of large numbers (uniform deviation) result for the empirical objective
$\Pi_M$ over the hypothesis class $\mathcal{N}_p\times\mathcal{N}_u$; see \citet[Chs.~2--3]{vdVWellner1996}.
In the sample-average approximation (SAA) viewpoint, the same uniform perturbation bound follows from the basic stability theory of
stochastic programming; see \citet[Ch.~3]{ShapiroDentchevaRuszczynski2014}.
\end{proof}

%----------------------------------------------;
%  Remark: Deterministic discretization error  ;
%----------------------------------------------;
\begin{remark}[Deterministic discretization error]
If the discrete functional $\Pi_{M}$ is obtained from a \emph{deterministic} discretization of $\Pi_{\mathrm{LS}}$
(e.g., a fixed quadrature rule such as Gaussian quadrature or sparse grids, mesh-based collocation with deterministic
points, or deterministic cubature rules with proven error estimates), then $\Pi_{\mathrm{M}}$ is non-random and the
functional perturbation estimate can be stated \emph{without} probability. In this case, one assumes the deterministic
bound
 \begin{equation}
        \label{eq:deterministic-perturbation}
        \sup_{(\mathcal{P},\mathbf{u})\in \mathcal{N}} 
        \Big|\Pi_{\mathrm{LS}}\big[\mathcal{P}(\mathbf{x}),\mathbf{u}(\mathbf{x})\big] - 
        \Pi_{M}\big[\mathcal{P}(\mathbf{x}),\mathbf{u}(\mathbf{x})\big] \Big|
        \le \varepsilon_{\mathrm{disc}}(M)
    \end{equation}
    where $\varepsilon_{\mathrm{disc}}(M)$ is a quadrature/cubature (or collocation) consistency error estimate, often expressed equivalently in terms of a mesh size $h$ rather than the number of quadrature/collocation points $M$.
\end{remark}

%========================================================;
%  Theorem: Sobolev-network approximation in the U-norm  ;
%--------------------------------------------------------;
\begin{theorem}[Existence of a network approximant in $\mathcal U$]
    \label{Thm:Klinkenberg_Existence_network_approximant_U}
    Let $\Omega\subset\mathbb R^{nd}$ be a bounded Lipschitz domain. Then ReLU network pairs are dense in $\mathcal U$. More precisely, for every $\varepsilon>0$ there exists a network pair 
    \begin{align}
        \widetilde{\mathbb U}=(\tilde{\mathcal{P}},\tilde{\mathbf u})
        \in \mathcal N(\text{depth},\text{width})
    \end{align}
    with depth and width chosen sufficiently large (depending on $\varepsilon$ and $nd$) such that
    \begin{equation}
        \label{eq:U-approx-supported}
        \|\widetilde{\mathbb U}-\mathbb U^\star\|_{\mathcal U}
        \le \varepsilon
    \end{equation}
    Equivalently, defining the best-approximation error
    \begin{align}
        \label{eq:eps-app-def}
        \varepsilon_{\mathrm{app}}(\text{depth},\text{width})
        := \inf_{\mathbb U\in\mathcal N(\text{depth},\text{width})}
        \|\mathbb U-\mathbb U^\star\|_{\mathcal U}
    \end{align}
    one has
    \begin{align}
        \label{eq:eps_app_infimum}
        \varepsilon_{\mathrm{app}}(\text{depth},\text{width}) \to 0
    \end{align}
    as the architecture is enriched. 
    %%%
\end{theorem}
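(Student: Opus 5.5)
The plan is to argue by two successive density steps and glue them with the triangle inequality in $\|\cdot\|_{\mathcal U}$. Since $\|\cdot\|_{\mathcal U}^2$ is the sum of the $H^1(\Omega)$ norm squared of the pressure and the $H(\mathrm{div};\Omega,\Gamma_u)$ norm squared of the velocity, it suffices to approximate $\mathcal P^\star$ and $\mathbf u^\star$ separately, each to accuracy $\varepsilon/\sqrt 2$.

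\emph{Step 1: reduction to smooth targets.} For the pressure I will use the classical density of $C^\infty(\overline\Omega)$ in $H^1(\Omega)$ on bounded Lipschitz domains. This gives $\mathcal P_s\in C^\infty(\overline\Omega)$ with $\|\mathcal P_s-\mathcal P^\star\|_{H^1(\Omega)}\le \varepsilon/4$. For the velocity I need $\mathbf u_s\in C^\infty(\overline\Omega)^{nd}$ with $\|\mathbf u_s-\mathbf u^\star\|_{H(\mathrm{div};\Omega,\Gamma_u)}\le\varepsilon/4$. The construction I would try is the standard one for $H(\mathrm{div};\Omega)$:
\begin{enumerate}
    \item take a finite partition of unity subordinate to the Lipschitz charts;
    \item in each chart, translate the localized field a distance $t$ outward along the cone direction transversal to the boundary;
    \item mollify at scale $\ll t$ and sum the pieces.
\end{enumerate}
Convergence in $L^2$ and of the divergence is routine, since mollification and translation commute with $\mathrm{div}$ and the cut-off terms $\mathrm{grad}[\chi_j]\bullet\mathbf u$ are in $L^2$.

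\emph{Step 2: ReLU approximation of smooth fields in a norm that dominates $\|\cdot\|_{\mathcal U}$.} ReLU networks are Lipschitz and piecewise linear, so they belong to $W^{1,\infty}(\Omega)\subset H^1(\Omega)$. For any vector field $\mathbf v\in H^1(\Omega)^{nd}$ one has $\|\mathrm{div}[\mathbf v]\|_{L^2}\le\sqrt{nd}\,\|\mathbf v\|_{H^1}$. The trace theorem \eqref{Eqn:Klinkenberg_ptrace_inequality}, applied componentwise on $\Gamma_u$, gives $\|\mathbf v\bullet\widehat{\mathbf n}\|_{L^2(\Gamma_u)}\le C\|\mathbf v\|_{H^1}$. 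Hence $\|\mathbf v\|_{H(\mathrm{div};\Omega,\Gamma_u)}\le C'\|\mathbf v\|_{H^1(\Omega)}$, and similarly the pressure part is controlled by $H^1$. It therefore suffices to approximate the smooth functions $\mathcal P_s$ and each component of $\mathbf u_s$ by ReLU networks in $H^1(\Omega)$, and in fact in $W^{1,\infty}$, which is stronger on bounded $\Omega$. I will invoke the known Sobolev-norm approximation results for ReLU networks, e.g.\ G\"uhring--Kutyniok--Petersen: for $f\in W^{s,\infty}$ with $s\ge 2$, the $W^{1,\infty}$ error decays as a power of the network size. Extending $\mathcal P_s,\mathbf u_s$ smoothly to a cube containing $\Omega$ makes these results applicable. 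Stacking the scalar nets in parallel, or as heads on a shared trunk, gives a network pair in $\mathcal N(\text{depth},\text{width})$ with error at most $\varepsilon/4$ in each field. The triangle inequality then yields \eqref{eq:U-approx-supported}, and \eqref{eq:eps_app_infimum} follows because the network classes are nested as depth and width grow.

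\emph{Main obstacle.} I expect the hard part to be Step 1 for the velocity, namely density of smooth fields in $H(\mathrm{div};\Omega,\Gamma_u)$ with the extra $L^2(\Gamma_u)$ control of the normal trace. Mere $H(\mathrm{div})$ convergence gives only $H^{-1/2}$ convergence of traces. After translation the normal trace is an interior value, so one must show $L^2(\Gamma_u)$ convergence of these shifted normal components. My first attempt will be to decompose $\mathbf u^\star=\mathrm{grad}[\psi]+\mathbf z$. Here $\psi$ solves a Neumann-type problem with data $\mathbf u^\star\bullet\widehat{\mathbf n}\in L^2(\Gamma_u)$; by Lipschitz-domain regularity (Jerison--Kenig) this gives nontangential control and hence $L^2$ trace convergence of the shifted gradients. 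The remainder $\mathbf z$ has zero normal trace on $\Gamma_u$ and is handled by the standard $H_0(\mathrm{div})$ density. If this proves too delicate, the fallback is to add the mild hypothesis $\mathbf u^\star\in H^1(\Omega)^{nd}$. In that case Step 1 is just $H^1$ density and the argument goes through directly.
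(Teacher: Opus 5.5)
Your Step 2 is, in substance, the paper's entire proof. The paper simply invokes G\"uhring--Kutyniok--Petersen (Theorem~4.1) componentwise for $\mathcal P^\star$ and each component of $\mathbf u^\star$, and adds the estimates in the product norm. You are more careful than the paper on two points it skips. First, that theorem needs targets in $W^{n,p}$ with $n\ge 2$ on a cube, hence your smoothing and extension step. Second, componentwise Sobolev approximation controls the divergence and $L^2(\Gamma_u)$ normal-trace parts of $\|\cdot\|_{\mathcal U}$ only through a bound like your $\|\mathbf v\|_{H(\mathrm{div};\Omega,\Gamma_u)}\le C'\|\mathbf v\|_{H^1(\Omega)}$, and that bound presupposes $\mathbf u^\star\in H^1(\Omega)^{nd}$. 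Membership $\mathbf u^\star\in H(\mathrm{div};\Omega,\Gamma_u)$ does not give this, so the paper's argument tacitly runs under exactly your fallback hypothesis. Under that hypothesis your proof is complete.

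The unconditional version of your Step 1 for the velocity has a genuine gap. It sits at the junction $\overline{\Gamma_u}\cap\overline{\Gamma_p}$, which is nonempty in typical mixed problems such as the footing benchmark. You must prescribe something for $\psi$ on $\Gamma_p$, where $\mathbf u^\star\bullet\widehat{\mathbf n}$ is only in $H^{-1/2}$, and both natural choices fail.

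\emph{Pure Neumann problem.} Suppose you extend the data $\mathbf u^\star\bullet\widehat{\mathbf n}$ on $\Gamma_u$ by zero on $\Gamma_p$, restore compatibility with a constant source, and subtract a Newtonian potential to reach the harmonic case. Then Jerison--Kenig does give $L^2$ nontangential control of $\mathrm{grad}[\psi]$. But $\mathbf z$ then has vanishing normal trace only on $\Gamma_u$, so density of $C_c^\infty(\Omega)$ in $H_0(\mathrm{div};\Omega)$ does not apply. What you need is density of smooth fields vanishing near $\Gamma_u$ in the partial-boundary space $H_{\Gamma_u}(\mathrm{div};\Omega)$. That is a nontrivial result, available only under regularity assumptions on the interface (e.g.\ weak Lipschitz pairs, as in Jochmann or Bauer--Pauly--Schomburg), and the paper assumes nothing about the partition.

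\emph{Mixed problem.} If instead you impose $\psi=0$ on $\Gamma_p$, Jerison--Kenig no longer applies, and $L^2$ nontangential gradient bounds fail in general. At a flat Dirichlet--Neumann junction the model solution $r^{1/2}\cos(\vartheta/2)$ has $|\mathrm{grad}[\psi]|\sim r^{-1/2}$, which is not square integrable along the boundary. For this solution, the normal components on $\Gamma_u$ of the shifted fields in your construction stay bounded away from zero in $L^2(\Gamma_u)$, even though its exact normal trace there vanishes.

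A partition of unity handles patches touching only $\Gamma_u$ or only $\Gamma_p$, but patches straddling the interface need a genuinely new ingredient. So either state $\mathbf u^\star\in H^1(\Omega)^{nd}$ explicitly, which is what the paper's proof actually uses, or add an interface-regularity assumption and cite the corresponding partial-boundary density theorem.
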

%----------------------------------------------;
%  Proof of the Sobolev network approximation  ;
%----------------------------------------------;
\begin{proof}
    The claim is a direct application of Sobolev-approximation results for deep ReLU networks. Concretely, the approximation theorem of \citet[Theorem~4.1]{GuhringKutyniokPetersen2019} shows that for bounded Lipschitz domains $\Omega\subset\mathbb R^{nd}$ and for target functions in the relevant Sobolev class, ReLU networks are dense in the corresponding Sobolev norm, with an explicit depth/width (or nonzero-weight) requirement depending on the dimension $nd$ and the tolerance.

    Apply this result componentwise to $\mathcal{P}^\star$ and to each component of $\mathbf u^\star$, and combine the componentwise estimates using the definition of the product norm $\|\cdot\|_{\mathcal U}$ to obtain: for every $\varepsilon>0$ there exist depths/widths large enough and a network pair $\widetilde{\mathbb U}\in\mathcal N(\text{depth},\text{width})$ such that inequality \eqref{eq:U-approx-supported} holds.

    Finally, the definition \eqref{eq:eps-app-def} immediately implies that
    $\varepsilon_{\mathrm{app}}(\text{depth},\text{width})$ is nonincreasing under architecture enrichment and tends to $0$ along any sequence of architectures for which the above density statement is enforced. The last inequality \eqref{eq:eps_app_infimum} follows from the definition of an infimum.
\end{proof}

%-----------------------------------------;
%  Remark about ReLU activation function  ;
%-----------------------------------------;
\begin{remark}
    Although the proof of Theorem \ref{Thm:Klinkenberg_Existence_network_approximant_U} considered ReLU activation function, it did not use any special structural property of the ReLU activation beyond the availability of a Sobolev density result for the corresponding network class. The argument only requires that the network class be dense in the Sobolev spaces underlying the $\|\cdot\|_{\mathcal U}$-norm---in particular, dense in $H^1(\Omega)$ for the pressure component and component-wise dense in a Sobolev space ensuring that the divergence and trace terms in $\|\cdot\|{\mathcal U}$ are well-defined. Hence, the same conclusion holds for any activation function $\sigma$ such that $\mathcal N_\sigma$ enjoys the corresponding Sobolev density property. In particular:
    \begin{itemize} 
        \item For piecewise-linear two-slope activations (e.g., leaky-ReLU, PReLU), ReLU can be expressed as an affine combination of $\sigma$ and the identity, so any ReLU network can be rewritten as a $\sigma$-network up to a constant-factor change in parameters. The ReLU-based density result therefore applies directly.
        \item For smooth non-polynomial activations (e.g., $\tanh$, sigmoid, softplus), Sobolev-norm approximation theorems are available (see, e.g., \citep{de2021approximation, siegel2020approximation}), yielding the same existence result, possibly with different architectural requirements and approximation rates.
    \end{itemize}
\end{remark}

%=========================================================;
%  Theorem: Network approximation implies bias-gap bound  ;
%---------------------------------------------------------;
\begin{theorem}[Bias gap bound]
    \label{Thm:Klinkenberg_Bias_gap_bound}
    The bias gap satisfies
    \begin{equation}
        \label{eq:bias-gap-bound}
        \Delta_{\mathcal N}
        \le \frac{C_{\mathcal B}}{2}\,\Big(\varepsilon_{\mathrm{app}}(\text{depth},\text{width})\Big)^2
    \end{equation}
    In particular, if $\varepsilon_{\mathrm{app}}(\text{depth},\text{width})\to 0$ as the architecture is enriched, then $\Delta_{\mathcal N}\to 0$.
\end{theorem}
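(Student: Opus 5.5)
The plan is to show that, because $\Pi_{\mathrm{LS}}$ is an exact quadratic functional with symmetric bilinear part, its suboptimality about $\mathbb U^\star$ is exactly half the energy of the deviation. I will first check that the Euler--Lagrange (first-order optimality) condition holds at $\mathbb U^\star$. Since $\Pi_{\mathrm{LS}}[\mathbb U]=\tfrac12\mathcal B(\mathbb U;\mathbb U)-l(\mathbb U)$ and $\mathcal B$ is symmetric (by inspection of Eq.~\eqref{Eqn:Klinkenberg_Bilinear_form}, using the symmetry of $\mathbf K_0$), minimality of $\mathbb U^\star$ from Lemma~\ref{Lemma:Klinkenberg_Unique_global_minimizer} gives, by differentiating $t\mapsto\Pi_{\mathrm{LS}}[\mathbb U^\star+t\mathbb W]$ at $t=0$,
\begin{align}
    \mathcal B(\mathbb W;\mathbb U^\star)=l(\mathbb W)\qquad\forall\,\mathbb W\in\mathcal U
\end{align}
This is exactly the weak problem solved via Lax--Milgram.

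Next I will expand the functional about $\mathbb U^\star$. For any $\mathbb U\in\mathcal U$, write $\mathbb U=\mathbb U^\star+\mathbb E$ and use bilinearity and symmetry:
\begin{align}
    \Pi_{\mathrm{LS}}[\mathbb U]-\Pi_{\mathrm{LS}}[\mathbb U^\star]
    =\mathcal B(\mathbb E;\mathbb U^\star)-l(\mathbb E)+\tfrac12\mathcal B(\mathbb E;\mathbb E)
    =\tfrac12\mathcal B(\mathbb U-\mathbb U^\star;\mathbb U-\mathbb U^\star)
\end{align}
Applying Theorem~\ref{Thm:Klinkenberg_Boundedness_of_bilinear_form} with $\mathbb W=\mathbb U=\mathbb U-\mathbb U^\star$ then yields $\Pi_{\mathrm{LS}}[\mathbb U]-\Pi_{\mathrm{LS}}[\mathbb U^\star]\le \tfrac{C_{\mathcal B}}{2}\|\mathbb U-\mathbb U^\star\|_{\mathcal U}^2$, where $C_{\mathcal B}$ is the constant $C_B$ of that theorem. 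This identity is the key step; everything after it is bookkeeping.

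Finally I will pass to the network class. Because $\mathbb U_{\mathrm{dl}}$ minimizes $\Pi_{\mathrm{LS}}$ over $\mathcal N$, for every $\widetilde{\mathbb U}\in\mathcal N$ we have $\Delta_{\mathcal N}\le \Pi_{\mathrm{LS}}[\widetilde{\mathbb U}]-\Pi_{\mathrm{LS}}[\mathbb U^\star]\le \tfrac{C_{\mathcal B}}{2}\|\widetilde{\mathbb U}-\mathbb U^\star\|_{\mathcal U}^2$. Taking the infimum over $\widetilde{\mathbb U}\in\mathcal N(\text{depth},\text{width})$ and using definition~\eqref{eq:eps-app-def} gives~\eqref{eq:bias-gap-bound}. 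Combining this with $\Delta_{\mathcal N}\ge0$ and Theorem~\ref{Thm:Klinkenberg_Existence_network_approximant_U} gives $\Delta_{\mathcal N}\to0$.

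The main subtlety is whether the minimizer $\mathbb U_{\mathrm{dl}}$ over the nonconvex class $\mathcal N$ actually exists. To avoid depending on that, I will phrase the argument with $\Pi_{\mathrm{LS}}[\mathbb U_{\mathrm{dl}}]$ replaced by $\inf_{\mathcal N}\Pi_{\mathrm{LS}}$. The inequality survives unchanged, since it only needs $\Pi_{\mathrm{LS}}[\mathbb U_{\mathrm{dl}}]\le\Pi_{\mathrm{LS}}[\widetilde{\mathbb U}]$ for all $\widetilde{\mathbb U}\in\mathcal N$.
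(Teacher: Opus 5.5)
Your proof is correct and follows the paper's argument essentially step for step. The argument in both is the energy identity $\Pi_{\mathrm{LS}}[\mathbb U]-\Pi_{\mathrm{LS}}[\mathbb U^\star]=\tfrac12\mathcal B(\mathbb U-\mathbb U^\star;\mathbb U-\mathbb U^\star)$ from the normal equations (you derive them from first-order optimality where the paper cites them as the Lax--Milgram equations), then boundedness of $\mathcal B$ and comparison with an arbitrary network pair. Your version is slightly cleaner: taking the infimum over $\widetilde{\mathbb U}\in\mathcal N$ directly, and allowing $\inf_{\mathcal N}\Pi_{\mathrm{LS}}$ in place of $\Pi_{\mathrm{LS}}[\mathbb U_{\mathrm{dl}}]$, avoids the paper's Step~3 claim that some network pair has error at most $\varepsilon_{\mathrm{app}}$, which can fail because that infimum need not be attained.
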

%---------------------------;
%  Proof of bias-gap bound  ;
%---------------------------;
\begin{proof}
    Let $\mathbb U_{\mathrm{dl}}\in\mathcal N$ be a minimizer of $\Pi_{\mathrm{LS}}$ over $\mathcal N$, and let
    $\widetilde{\mathbb U}\in\mathcal N$ be arbitrary. By minimality of $\mathbb U_{\mathrm{dl}}$ on $\mathcal N$, we write 
    \begin{equation}\label{eq:gap-min}
        \Delta_{\mathcal N}
        = \Pi_{\mathrm{LS}}[\mathbb U_{\mathrm{dl}}]-\Pi_{\mathrm{LS}}[\mathbb U^\star]
        \le \Pi_{\mathrm{LS}}[\widetilde{\mathbb U}]-\Pi_{\mathrm{LS}}[\mathbb U^\star]
    \end{equation}

    \medskip
    \noindent\textbf{Step 1: Energy identity.}
    Recall that the least-squares functional has the quadratic form
    \begin{equation}
        \label{eq:PiLS-quadratic}
        \Pi_{\mathrm{LS}}[\mathbb U]
        = \frac12\,\mathcal B(\mathbb U;\mathbb U)-\ell(\mathbb U) 
    \end{equation}
    Moreover, $\mathbb U^\star\in\mathcal U$ is the unique solution of the associated normal equations
    \begin{equation}
        \label{eq:normal-eq}
        \mathcal B(\mathbb U^\star;\mathbb V)=\ell(\mathbb V)\qquad \forall \mathbb V\in\mathcal U
    \end{equation}
    Set $\mathbb E:=\mathbb U-\mathbb U^\star$. Expanding Eq.~\eqref{eq:PiLS-quadratic} at $\mathbb U^\star$ yields
    \begin{align}
        \Pi_{\mathrm{LS}}[\mathbb U]-\Pi_{\mathrm{LS}}[\mathbb U^\star]
        &= \frac12\Big(\mathcal B(\mathbb U^\star+\mathbb E;\mathbb U^\star+\mathbb E) 
        - \mathcal B(\mathbb U^\star;\mathbb U^\star)\Big)
        -\big(\ell(\mathbb U^\star+\mathbb E)-\ell(\mathbb U^\star)\big) 
        \nonumber \\
        &= \frac12\Big(2\,\mathcal B(\mathbb U^\star;\mathbb E)+\mathcal B(\mathbb E;\mathbb E)\Big)-\ell(\mathbb E) 
        \nonumber \\
        &= \mathcal B(\mathbb U^\star;\mathbb E)-\ell(\mathbb E)+\frac12\,\mathcal B(\mathbb E;\mathbb E)
    \end{align}
    Using Eq.~\eqref{eq:normal-eq} with $\mathbb V=\mathbb E$ gives $\ell(\mathbb E)=\mathcal B(\mathbb U^\star;\mathbb E)$; hence, the cross term cancels, and we obtain the
    \emph{energy identity}:
    \begin{equation}
        \label{eq:energy-identity}
        \Pi_{\mathrm{LS}}[\mathbb U]-\Pi_{\mathrm{LS}}[\mathbb U^\star]
        = \frac12\,\mathcal B(\mathbb U-\mathbb U^\star;\mathbb U-\mathbb U^\star)
        \qquad\text{for all }\mathbb U\in\mathcal U
    \end{equation}

    \medskip
    \noindent\textbf{Step 2: Apply the energy identity and bound.}
    Applying \eqref{eq:energy-identity} with $\mathbb U=\widetilde{\mathbb U}$ and then using the boundedness
    of the bilinear form (i.e., Theorem \ref{Thm:Klinkenberg_Boundedness_of_bilinear_form}) yields
    \begin{align}
        \Pi_{\mathrm{LS}}[\widetilde{\mathbb U}]-\Pi_{\mathrm{LS}}[\mathbb U^\star]
        &= \frac12\,\mathcal B(\widetilde{\mathbb U}-\mathbb U^\star;\widetilde{\mathbb U}-\mathbb U^\star)
        \le \frac{C_{\mathcal B}}{2}\,\|\widetilde{\mathbb U}-\mathbb U^\star\|_{\mathcal U}^2
        \label{eq:gap-Bbound}
    \end{align}

    \medskip
    \noindent\textbf{Step 3: Insert the network approximation bound.}
    By Theorem~\ref{Thm:Klinkenberg_Existence_network_approximant_U}, there exists
    $\widetilde{\mathbb U}\in\mathcal N$ such that
    \begin{align}
        \|\widetilde{\mathbb U}-\mathbb U^\star\|_{\mathcal U}\le
        \varepsilon_{\mathrm{app}}(\text{depth},\text{width})
    \end{align}
    Combining this with Eqs.~\eqref{eq:gap-min}--\eqref{eq:gap-Bbound} gives
    \begin{align}
        \Delta_{\mathcal N}
        \le \frac{C_{\mathcal B}}{2}\,
        \Big(\varepsilon_{\mathrm{app}}(\text{depth},\text{width})\Big)^2
    \end{align}
    which is exactly Eq.~\eqref{eq:bias-gap-bound}. The final statement follows immediately.
    %%%
\end{proof}

%===================================================;
%  Subsection: Combine stability and approximation  ;
%===================================================;
\subsection{Combine stability and approximation}
We now combine the stability of the continuous problem with the approximation and discretization estimates derived above. 
The quadratic-growth property of $\Pi_{\mathrm{LS}}$ converts objective-level bounds into norm bounds in $\mathcal U$, yielding an explicit total-error estimate and convergence of the trained solution.

%=======================================;
%  Theorem: Estimating the total error  ;
%---------------------------------------;
\begin{theorem}[Estimating the total error in the transformed variables]
    \label{Thm:Klinkenberg_Estimating_the_total_error}
    The discrete minimizer $\mathbb{U}_M$ and the global minimizer $\mathbb{U}^{\star}$ satisfy
    \begin{align}
        \|\mathbb U_M-\mathbb U^\star\|_{\mathcal U}
        \le
        \sqrt{\frac{2}{\alpha_0}\Big(\Delta_{\mathcal N}+2\varepsilon_{\mathrm{disc}}(M,\delta)\Big)}
    \end{align}
    where
    \[
        \Delta_{\mathcal N}:=\Pi_{\mathrm{LS}}[\mathbb U_{\mathrm{dl}}]
        - \Pi_{\mathrm{LS}}[\mathbb U^\star] \ge 0
    \]
In particular, if $\Delta_{\mathcal N}\to 0$ (e.g., by a depth/width approximation theorem as the
network class is enriched) and $\varepsilon_{\mathrm{disc}}(M,\delta)\to 0$ as $M\to\infty$,
then $\mathbb U_M\to \mathbb U^\star$ in $\|\cdot\|_{\mathcal U}$.
\end{theorem}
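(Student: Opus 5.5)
The plan is to bound the functional suboptimality $\Pi_{\mathrm{LS}}[\mathbb U_M]-\Pi_{\mathrm{LS}}[\mathbb U^\star]$ and then convert it into a norm bound with the quadratic-growth estimate of Lemma~\ref{Lemma:Klinkenberg_QG_of_PiLS}. Since $\mathbb U_M\in\mathcal N\subset\mathcal U$, that lemma gives
\begin{align}
    \frac{\alpha_0}{2}\,\|\mathbb U_M-\mathbb U^\star\|_{\mathcal U}^2
    \le \Pi_{\mathrm{LS}}[\mathbb U_M]-\Pi_{\mathrm{LS}}[\mathbb U^\star]
\end{align}
It therefore suffices to show that, on the event of probability at least $1-\delta$ supplied by Lemma~\ref{Lemma:Klinkenberg_UP_bound}, the right-hand side is at most $\Delta_{\mathcal N}+2\varepsilon_{\mathrm{disc}}(M,\delta)$.

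For the suboptimality bound, I would use the standard three-term telescoping of sample-average approximation:
\begin{align}
    \Pi_{\mathrm{LS}}[\mathbb U_M]-\Pi_{\mathrm{LS}}[\mathbb U^\star]
    &= \big(\Pi_{\mathrm{LS}}[\mathbb U_M]-\Pi_M[\mathbb U_M]\big)
    + \big(\Pi_M[\mathbb U_M]-\Pi_M[\mathbb U_{\mathrm{dl}}]\big) \nonumber \\
    &\quad + \big(\Pi_M[\mathbb U_{\mathrm{dl}}]-\Pi_{\mathrm{LS}}[\mathbb U_{\mathrm{dl}}]\big)
    + \big(\Pi_{\mathrm{LS}}[\mathbb U_{\mathrm{dl}}]-\Pi_{\mathrm{LS}}[\mathbb U^\star]\big)
\end{align}
The terms are handled as follows.
\begin{itemize}
    \item The first and third terms are each bounded by $\varepsilon_{\mathrm{disc}}(M,\delta)$ via the uniform perturbation bound \eqref{Eqn:Klinkenberg_UP_bound}. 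This is legitimate because both $\mathbb U_M$ and $\mathbb U_{\mathrm{dl}}$ lie in $\mathcal N$, and a single high-probability event covers both, since the bound is a supremum over $\mathcal N$.
    \item The second term is $\le 0$ because $\mathbb U_M$ minimizes $\Pi_M$ over $\mathcal N$ and $\mathbb U_{\mathrm{dl}}\in\mathcal N$.
    \item The fourth term is exactly $\Delta_{\mathcal N}$ by \eqref{Eqn:Klinkenberg_Bias_gap}.
\end{itemize}
Summing the four bounds, multiplying by $2/\alpha_0$ and taking square roots yields the stated estimate.

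For the final assertion, $\Delta_{\mathcal N}\to 0$ follows from Theorem~\ref{Thm:Klinkenberg_Bias_gap_bound} together with Theorem~\ref{Thm:Klinkenberg_Existence_network_approximant_U}. Combined with $\varepsilon_{\mathrm{disc}}\to0$ from Lemma~\ref{Lemma:Klinkenberg_UP_bound}, the right-hand side vanishes, so convergence holds with probability at least $1-\delta$ for each fixed $\delta$. In the deterministic-quadrature setting of the subsequent remark, it holds unconditionally.

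The main subtlety, rather than a hard computation, is existence of the minimizers $\mathbb U_{\mathrm{dl}}$ and $\mathbb U_M$ over the nonconvex class $\mathcal N$. Lemma~\ref{Lemma:Klinkenberg_QG_of_PiLS} does not need convexity of $\mathcal N$, since it is applied on $\mathcal U$, but the arg-min may fail to be attained. If needed, I would work with $\eta$-approximate minimizers, which adds a harmless $\eta$ to the bound before letting $\eta\to0$, or simply take attainment as implicit in the definitions, as the paper does. A second point I would make explicit is that the statement holds on the probability-$(1-\delta)$ event.
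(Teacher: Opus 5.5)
Your proposal is correct and follows essentially the same route as the paper: the paper's chain $\Pi_{\mathrm{LS}}[\mathbb U_M]\le \Pi_M[\mathbb U_M]+\varepsilon_{\mathrm{disc}}\le \Pi_M[\mathbb U_{\mathrm{dl}}]+\varepsilon_{\mathrm{disc}}\le \Pi_{\mathrm{LS}}[\mathbb U_{\mathrm{dl}}]+2\varepsilon_{\mathrm{disc}}$ is exactly your four-term telescoping, followed by the same quadratic-growth step on $\mathcal U$. Your two added clarifications are sound points that the paper leaves implicit: the bound holds on a single event of probability at least $1-\delta$, and minimizers over the nonconvex class $\mathcal N$ may fail to be attained, which you handle with $\eta$-approximate minimizers.
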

%----------------------------;
%  Proof of the total error  ;
%----------------------------;
\begin{proof}
    By the uniform perturbation bound (given by Lemma \ref{Lemma:Klinkenberg_UP_bound}), for every 
    $(\mathcal{P},\mathbf{u}) \in \mathcal{N}$, we have 
    \begin{align}
        \big|\Pi_{\mathrm{LS}}[\mathcal{P},\mathbf{u}]
        - \Pi_M[\mathcal{P},\mathbf{u}]\big|
        \le \varepsilon_{\mathrm{disc}}(M,\delta)
    \end{align}
    In particular,
    \begin{align}
        \Pi_{\mathrm{LS}}[\mathcal{P},\mathbf{u}]
        \le \Pi_M[\mathcal{P},\mathbf{u}]
        + \varepsilon_{\mathrm{disc}}(M,\delta)
        \quad \mathrm{and} \quad 
        \Pi_M[\mathcal{P},\mathbf{u}] \le 
        \Pi_{\mathrm{LS}}[\mathcal{P},\mathbf{u}]
        + \varepsilon_{\mathrm{disc}}(M,\delta)
        \quad \text{on } \mathcal{N}
    \end{align}
    Since $\mathbb U_M$ minimizes $\Pi_M$ over $\mathcal N$, we establish
    \begin{align}
        \Pi_M[\mathbb U_M] \le \Pi_M[\mathbb U_{\mathrm{dl}}]
    \end{align}
    Hence, we write 
    \begin{align}
        \Pi_{\mathrm{LS}}(\mathbb U_M)
        \le \Pi_M[\mathbb U_M] + \varepsilon_{\mathrm{disc}}
        \le \Pi_M[\mathbb U_{\mathrm{dl}}] 
        + \varepsilon_{\mathrm{disc}}
        \le \Pi_{\mathrm{LS}}[\mathbb U_{\mathrm{dl}}] 
        + 2 \, \varepsilon_{\mathrm{disc}}
    \end{align}
    Subtracting $\Pi_{\mathrm{LS}}[\mathbb U^\star]$ and using the definition of $\Delta_{\mathcal N}$ gives
    \begin{align}
        \Pi_{\mathrm{LS}}[\mathbb U_M] - 
        \Pi_{\mathrm{LS}}[\mathbb U^\star]
        \le \Delta_{\mathcal N} + 2 \, \varepsilon_{\mathrm{disc}}
    \end{align}
    Applying quadratic growth on $\mathcal U$ with $\mathbb U=\mathbb U_M$ yields
    \begin{align}
        \frac{\alpha_0}{2}\|\mathbb U_M-\mathbb U^\star\|_{\mathcal U}^2
        \le \Pi_{\mathrm{LS}}[\mathbb U_M]
        - \Pi_{\mathrm{LS}}[\mathbb U^\star]
        \le \Delta_{\mathcal N} + 2 \, \varepsilon_{\mathrm{disc}}(M,\delta)
    \end{align}
    which proves the claim after rearranging and taking square roots.
\end{proof}

%=================================================;
%  Subsection: Convergence in physical variables  ;
%=================================================;
\subsection{Convergence in physical variables} 
Since $\mathcal U_M = (\mathcal P_M,\mathbf u_M)$ converges to $\mathcal U^\star = (\mathcal P^\star,\mathbf u^\star)$ in $\mathcal U$, and since the Lambert--$W$ transformation mapping the transformed variable $\mathcal P(\mathbf x)$ to the physical pressure $p(\mathbf x)$ is continuous, convergence is preserved under this transformation. Hence, the reconstructed physical pressure field obtained from $\mathcal P_M$ converges to the exact physical pressure corresponding to $\mathcal P^\star$.

Moreover, the velocity field is unaffected by the Hopf–Cole transformation, as the transformation acts solely on the pressure variable. The velocity $\mathbf u(\mathbf x)$ is represented directly in the DeepLS formulation and therefore does not undergo any nonlinear mapping. Consequently, convergence of $\mathbf u_M$ to $\mathbf u^\star$ immediately implies convergence of the physical velocity field.

In summary, the solution produced by the DeepLS framework---combining the Hopf–Cole transformation with deep neural networks within a least-squares formulation---converges to the exact solution field of the nonlinear Klinkenberg model in both pressure and velocity variables. This finishes the convergence analysis.  
    
    %*********************************************;
%                                             ;
%  NAME                                       ;
%    S5_Klinkenberg_NR.tex                    ;
%                                             ;
%*********************************************;
\section{REPRESENTATION NUMERICAL RESULTS}
\label{Sec:S5_Klinkenberg_NR}

We present numerical results for several canonical problems to:
\begin{enumerate}
    \item Establish the accuracy of the proposed framework through comparisons with analytical solutions where available and with finite element solutions otherwise.
    \item Demonstrate numerical convergence under systematic refinements in network width and depth.
    \item Quantify time-to-solution performance.
\end{enumerate}

The reported training times provide implementation-specific time-to-solution information
for the proposed DeepLS framework. They are not intended as a systematic computational-cost
comparison with finite element methods or PINN-based solvers, since such a comparison would
require careful control of hardware, implementation details, solver choices, mesh resolution,
collocation density, network architecture, optimizer settings, stopping criteria, and target
accuracy. Rather, the timings reported below are included to indicate the computational scale
of the present implementation for the benchmark problems considered in this study.

%======================================================;
%  Subsection: Gas flow through concentrric cylinders  ;
%======================================================;
\subsection{Gas flow through concentric cylinders}
\label{subsec: Gas flow through concentric cylinders}
Solving this boundary-value problem serves several purposes. Physically, gas flow in an annular porous domain occurs in subsurface and laboratory settings such as radial flow around wells, core-scale permeability measurements, and pneumatic testing of low-permeability media, where rarefaction effects may be significant \citep{wu1998gas}. From a modeling standpoint, the configuration provides a canonical axisymmetric benchmark for evaluating the DeepLS framework’s ability to resolve radial flow. Specifically, the analysis (i) validates DeepLS against closed-form analytical pressure and velocity solutions and (ii) isolates the impact of the Klinkenberg correction by comparison with the classical Darcy model under identical boundary conditions.

We consider gas flow in an annular porous domain bounded by two concentric circles of inner radius $r_i$ and outer radius $r_o$. Prescribed pressures are imposed on the inner and outer boundaries, as illustrated in \textbf{Fig.~\ref{Fig:Klinkenberg_Concentric_cylinders_BVP}}. Owing to the rotational symmetry of the geometry and boundary data, the solution is axisymmetric: the pressure and velocity only depend on the radial coordinate $r$. The corresponding analytical expressions for these fields are given by:
\begin{align}
    p(r) &= \beta\,p_{\text{atm}}\,W\!\left(
    \frac{1}{\beta\,p_{\text{atm}}}
    \exp\!\left(
    \frac{\mathcal{P}(r)}{\beta\,p_{\text{atm}}}
    \right)
    \right) \\
    v_r(r) &= \frac{K_{0}}{\mu}\,
    \frac{(p_i-p_o) + \beta\,p_{\text{atm}} \big(\ln(p_i) - \ln(p_o)\big)}
    {\ln(r_o) - \ln(r_i)}\,\frac{1}{r}
\end{align}
where the transformed pressure field is
\begin{align}
    \mathcal{P}(r)=\mathcal{P}_i+(\mathcal{P}_o-\mathcal{P}_i)\,
    \left(\frac{\ln(r) - \ln(r_i)}{\ln(r_o) - \ln(r_i)}\right) 
    \qquad r_i<r<r_o
\end{align}

%---------------------------------------;
%  Figure 3: Concentrric cylinders BVP  ;
%---------------------------------------;
\begin{figure}
    \centering
    \includegraphics[width=0.5\linewidth]{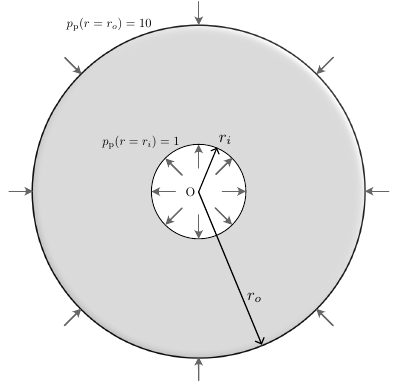}
    \caption{\textsf{Gas flow through concentric cylinders.} Schematic of a pressure-driven gas flow problem in an annular porous domain bounded by two concentric circles with radii \(r_i\) and \(r_o\). Prescribed pressures at the inner (\(p_{\mathrm{p}}(r_i)=10\)) and outer (\(p_{\mathrm{p}}(r_o)=1\)) boundaries induce an axisymmetric pressure field and the corresponding radial flow.}
    \label{Fig:Klinkenberg_Concentric_cylinders_BVP}
\end{figure}

The material parameters and geometric details used in the simulation are summarized in Table~\ref{Tab:klinkenberg_2D_concentric_cylinders}. The problem is solved using the proposed DeepLS framework, and the resulting pressure and velocity fields are shown in \textbf{Fig.~\ref{Fig:Klinkenberg_Concentric_contours}}. The numerical results are in excellent agreement with the analytical solution, as illustrated in \textbf{Fig.~\ref{Fig:Klinkenberg_Concentric_line_plots}}. These results were obtained using a neural network with six hidden layers and 64 neurons per layer, trained with the ReLU activation function; the implementation-specific training time on an NVIDIA T4 GPU was 6.34 minutes.

%---------------------------------------------;
%  Table 2: Material and geometry properties  ;
%---------------------------------------------;
\begin{table}[htbp]
    \centering
    \caption{\textsf{Gas flow through concentric cylinders.} Material and geometry parameters.  \label{Tab:klinkenberg_2D_concentric_cylinders}}
    \begin{tabular}{|c|c|}
        \hline
        \textbf{Parameter} & \textbf{Value} \\ \hline
        $\beta$     & 1 \\ 
        $\mu$       & 1 \\ 
        $\gamma\, \mathbf{b}(\mathbf{x})$  & $\approx \mathbf{0}$ \\ 
        $k_{0}$   & 1 \\
        $r_{i}$         & 0.3 \\ 
        $r_{0}$         & 1.0 \\ 
        \hline
    \end{tabular}
\end{table}

%---------------------------------;
%  Figure 4: Concentric contours  ;
%---------------------------------;
\begin{figure}[h]
    \centering
    \includegraphics[width=0.75\linewidth]{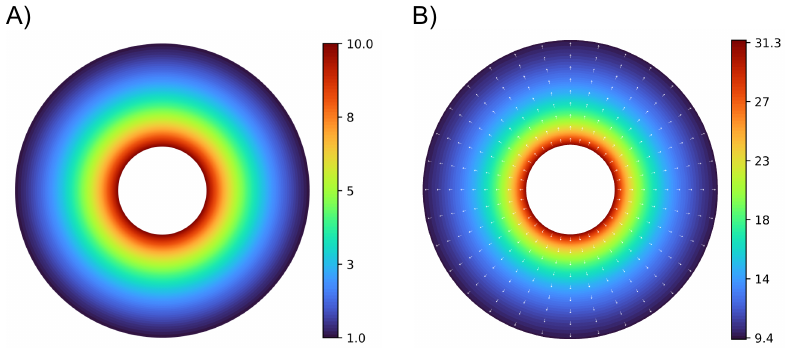}
    \caption{\textsf{Gas flow through concentric cylinders}. (A) Contours of the pressure field $p(\mathbf{x})$ in the concentric annular domain obtained with the proposed DeepLS framework under the Klinkenberg model, showing radial decay from the inner to the outer boundary. (B) Contours of the corresponding velocity magnitude $\lVert \mathbf{u}(\mathbf{x}) \rVert$, with superimposed velocity vectors indicating a purely radial flow, attaining a maximum near the inner boundary and decreasing monotonically toward the outer boundary.}
    \label{Fig:Klinkenberg_Concentric_contours}
\end{figure}

%----------------------------------------------;
%  Figure 5: Concentric line plot comparisons  ;
%----------------------------------------------;
\begin{figure}[h]
    \centering
    \includegraphics[width=0.85\linewidth]{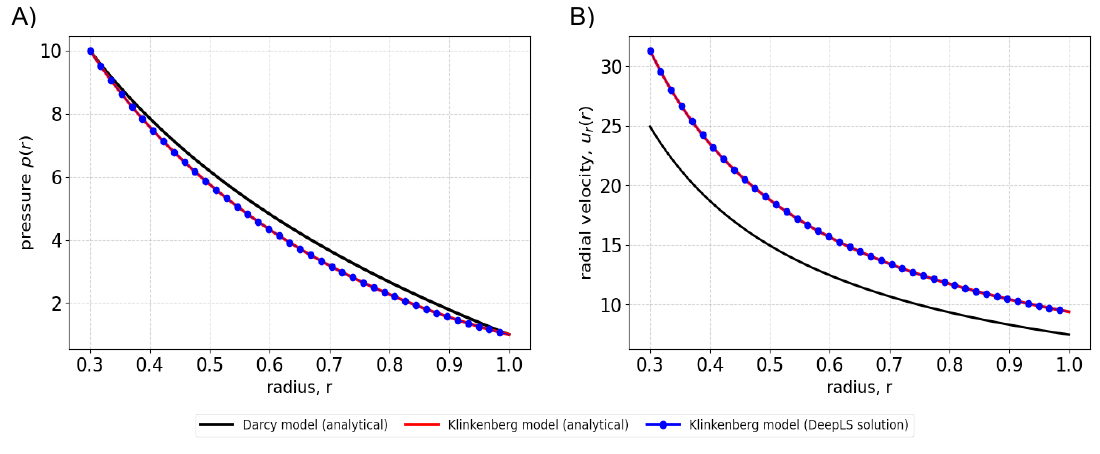}
    \caption{\textsf{Gas flow through concentric cylinders}. Analytical and DeepLS solution fields for the concentric-cylinder benchmark, comparing the classical Darcy model with the pressure-dependent Klinkenberg formulation.
    (A) Radial pressure profile as a function of radius: Darcy (constant permeability), analytical Klinkenberg solution obtained via the Hopf–Cole transformation, and the DeepLS prediction.
    (B) Radial velocity profile showing agreement between DeepLS and the analytical Klinkenberg solution, and deviation from the classical Darcy response due to slip-induced permeability enhancement.}
    \label{Fig:Klinkenberg_Concentric_line_plots}
\end{figure}

%===========================================================;
%  Subsubsection: Influence of depth and width on accuracy  ;
%-----------------------------------------------------------;
\subsubsection{Influence of network depth and width on accuracy}

To examine how the approximation accuracy of the proposed DeepLS framework depends on the neural network architecture, we conduct a controlled capacity study in which the network depth and width are systematically varied. Let $L$ denote the \emph{depth} (i.e., the number of hidden layers) and $m$ the \emph{width} (i.e., the number of neurons per hidden layer). The discrepancy between the neural approximation, denoted by $\widehat{u}_{\theta}$, and a reference solution $u_{\mathrm{ref}}$ (analytical) is quantified using the $L_2(\Omega)$ error:
\begin{align}
    \|\widehat{u}_{\theta}-u_{\mathrm{ref}}\|_{L_2(\Omega)}
    :=
    \left(\int_{\Omega}\big(\widehat{u}_{\theta}(\mathbf{x})-u_{\mathrm{ref}}(\mathbf{x})\big)^2\,\mathrm{d}\Omega\right)^{1/2}
\end{align}

\textbf{Figure~\ref{Fig:Klinbenberg_Concentric_cylinders_L2_error}A} shows the error as a function of a global capacity measure, the total number of neurons $N_{\mathrm{tot}}$, used as a proxy for the size of the parametric trial space. Two regimes are evident. At low capacity, the error remains relatively large and decreases slowly, indicating an \emph{under-parameterized} regime in which the trial space is insufficiently expressive and optimization can be challenging, particularly for deep but narrow networks. Beyond a moderate capacity threshold, the error drops rapidly by several orders of magnitude, marking an \emph{expressive} regime in which the solution is represented with high accuracy. Notably, depth influences both the rate and onset of this decay: deeper networks attain lower errors once sufficient capacity is available but offer limited advantage when the width is too small.

To further disentangle the roles of depth and width, \textbf{Fig.~\ref{Fig:Klinbenberg_Concentric_cylinders_L2_error}B} plots the error versus network width $m$ for fixed depths. The monotonic error reduction with increasing $m$ reflects systematic enrichment of the trial space. At larger widths, the deeper network ($L=6$) achieves a steeper accuracy gain, indicating that the solution benefits from the \emph{compositional expressivity} of depth once width-induced bottlenecks are removed.

Taken together, these observations indicate that increasing network capacity must be balanced against computational cost, and that performance comparisons should account for both approximation power and optimization effects.

%---------------------------------;
%  Figure 6: Concentric L2 error  ;
%---------------------------------;
\begin{figure}
    \centering
    \includegraphics[width=0.85\linewidth]{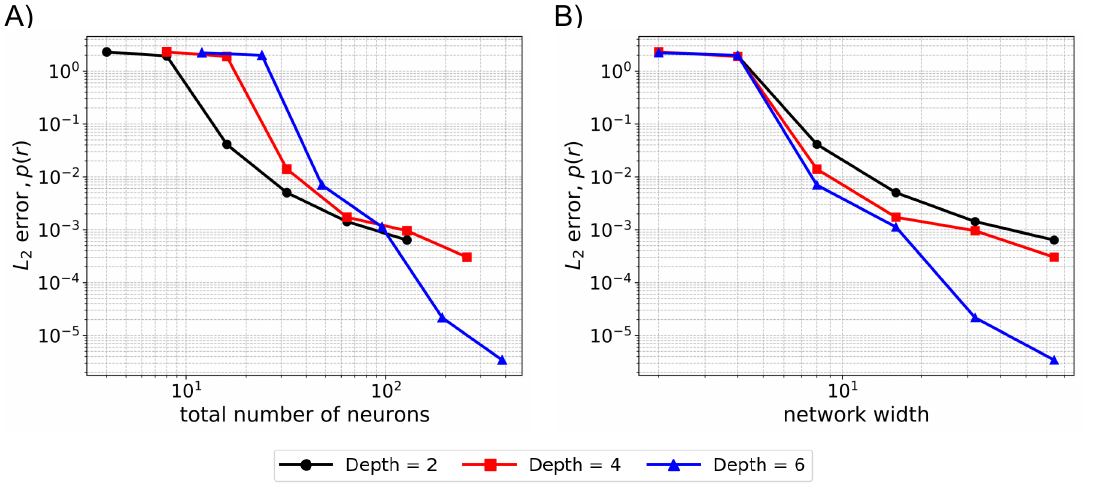}
    \caption{\textsf{Gas flow through concentric cylinders}. Convergence of the DeepLS approximation with increasing network capacity, characterized by the number of hidden layers $L$ and the number of neurons per layer $m$. (A) $L_{2}(\Omega)$ error between the neural approximation $\widehat{u}_{\theta}$ and the reference solution $u_{\mathrm{ref}}$ as a function of the total number of neurons $N_{\mathrm{tot}}$ for three depths ($L=2,4,6$). (B) $L_{2}(\Omega)$ error as a function of the network width $m$ for multiple depths ($L=2,4,6$).}
    \label{Fig:Klinbenberg_Concentric_cylinders_L2_error}
\end{figure}

%===============================;
%  Subsection: Footing problem  ;
%===============================;
\subsection{Footing problem}
\label{Subsec:Kinkenberg_Footing_problem}
Engineering and near-surface flow problems frequently involve accessible boundaries along which pressurized, sealed, and vented regions coexist, leading to spatially heterogeneous boundary conditions with abrupt transitions between prescribed-pressure and prescribed-flux segments. In standard numerical approaches, such as finite element methods, these boundary configurations can induce localized steep gradients, flux singularities near boundary junctions, and heightened sensitivity to mesh resolution, often necessitating the use of stabilization techniques \citep{preisig2011stabilization, murad1992improved}. To systematically examine the behavior of the proposed framework under this heterogeneous boundary conditions we study a footing-type benchmark defined on a rectangular porous domain. A schematic of the problem is shown in \textbf{Fig.~\ref{Fig:Klinkenberg_Footing_problem_BVP}}, and the material parameters and geometric details used in the simulation are summarized in Table~\ref{Tab:Klinkenberg_Footing_problem}.  

%---------------------------------;
%  Figure 7: Footing problem BVP  ;
%---------------------------------;
\begin{figure}[h]
    \centering
    \includegraphics[width=0.80\linewidth]{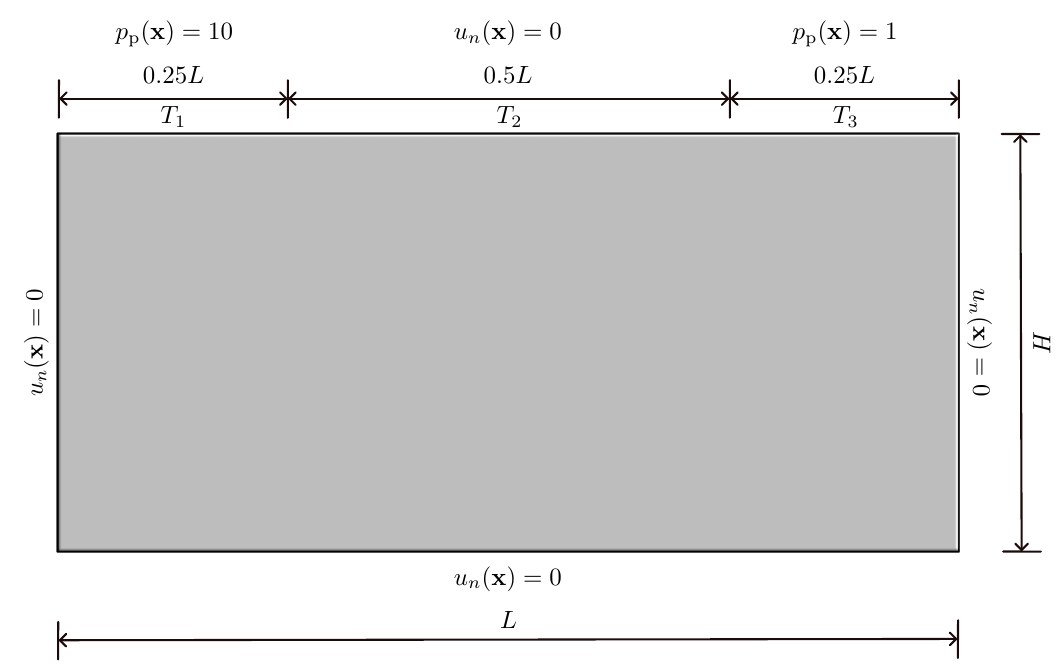}
    \caption{\textsf{Footing problem.} Schematic of the boundary value problem. The left, right, and bottom boundaries of the domain are impermeable and subject to a no-flux condition (i.e., the normal component of the seepage velocity vanishes, $\mathbf{u}(\mathbf{x}) \bullet \widehat{\mathbf{n}}(\mathbf{x}) = 0$). The top boundary ($y = 5$) is partitioned into three contiguous segments: (i) $T_1$ ($0 \le x \le 2.5$), where a prescribed pressure $p_{\mathrm p}(\mathbf{x}) = 10$ is applied; (ii) $T_2$ ($2.5 < x \le 7.5$), where a no-flux condition $\mathbf{v}(\mathbf{x}) \bullet \widehat{\mathbf{n}}(\mathbf{x}) = 0$ is enforced; and (iii) $T_3$ ($7.5 < x \le 10$), where the boundary is vented (i.e., hydraulically connected to an external reservoir and maintained at the reference pressure $p_{\mathrm p}(\mathbf{x}) = 1$). }
    \label{Fig:Klinkenberg_Footing_problem_BVP}
\end{figure}

%---------------------------------------------;
%  Table 3: Material and geometry properties  ;
%---------------------------------------------;
\begin{table}[htbp]
    \centering
    \caption{\textsf{Footing problem.} Material and geometry parameters.  \label{Tab:Klinkenberg_Footing_problem}}
    \begin{tabular}{|c|c|}
        \hline
        \textbf{Parameter} & \textbf{Value} \\ \hline
        $L$         & 10 \\ 
        $H$         & 5 \\ 
        $\beta$     & 1 \\ 
        $\mu$       & 1 \\ 
        $\phi(\mathbf{x})\,\gamma\, \mathbf{b}(\mathbf{x})$  & $\mathbf{0} = [0, 0]$ \\ 
        $k_{0}$   & 1 \\
        \hline
    \end{tabular}
\end{table}

\textbf{Figure~\ref{Fig:Klinkenberg_Footing_problem_solution_contours}} presents representative predicted fields obtained at a high collocation density, illustrating the spatial structure of the pressure field $p(\mathbf{x})$ and the corresponding distribution of the velocity magnitude $\lVert \mathbf{u}(\mathbf{x}) \rVert$. \textbf{Figure~\ref{Fig:Klinkenberg_Footing_problem_error_contours}} presents the absolute error fields with respect to the stabilized mixed finite element reference solution, together with the normalized $L^2$ errors for the pressure and velocity fields. The normalized $L^2$ error is defined as
\begin{equation}
    E_{\phi}^{\mathrm{rel}}
    := \frac{ \left\|
    \phi_{\mathrm{DeepLS}} - \phi_{\mathrm{FEM}}
    \right\|_{L^2(\Omega)}}{\left\|
    \phi_{\mathrm{FEM}}
    \right\|_{L^2(\Omega)} + \varepsilon}
\end{equation}
where $\phi$ denotes either the pressure or velocity field, and $\varepsilon$ is a small positive regularization parameter introduced to avoid numerical singularities. The results show a systematic reduction in the normalized $L^2$ errors with increasing collocation density, indicating improved agreement between the DeepLS predictions and the finite element reference solution and demonstrating convergence of the proposed framework.

Finally, \textbf{Fig.~\ref{Fig:Klinkenberg_Footing_problem_loss_variation}} demonstrates that increasing the collocation density systematically improves training behavior: the total loss exhibits smoother decay and converges to a lower terminal value, consistent with stronger enforcement of the governing equations and boundary conditions. These results were obtained using a neural network with six hidden layers and 64 neurons per layer, trained with the ReLU activation function; the total training time on an NVIDIA T4 GPU was 7.48 minutes.

%-----------------------------------------------;
%  Figure 8: Footing problem solution contours  ;
%-----------------------------------------------;
\begin{figure}
    \centering
    \includegraphics[width=1.0\linewidth]{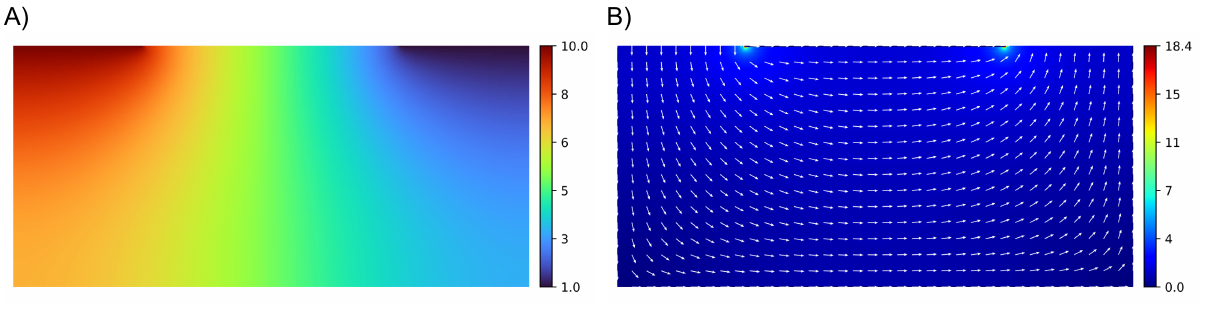}
    \caption{\textsf{Footing problem.} This figure illustrates DeepLS predictions computed using a collocation density of 12,000 points: (A) the pressure field, $p(\mathbf{x})$, and (B) the velocity-magnitude field, $\lVert \mathbf{u}(\mathbf{x}) \rVert$.}
    \label{Fig:Klinkenberg_Footing_problem_solution_contours}
\end{figure}

%-------------------------------------------;
%  Figure 9: Footing problem error contours ; 
%-------------------------------------------;
\begin{figure}
    \centering
    \includegraphics[width=1.0\linewidth]{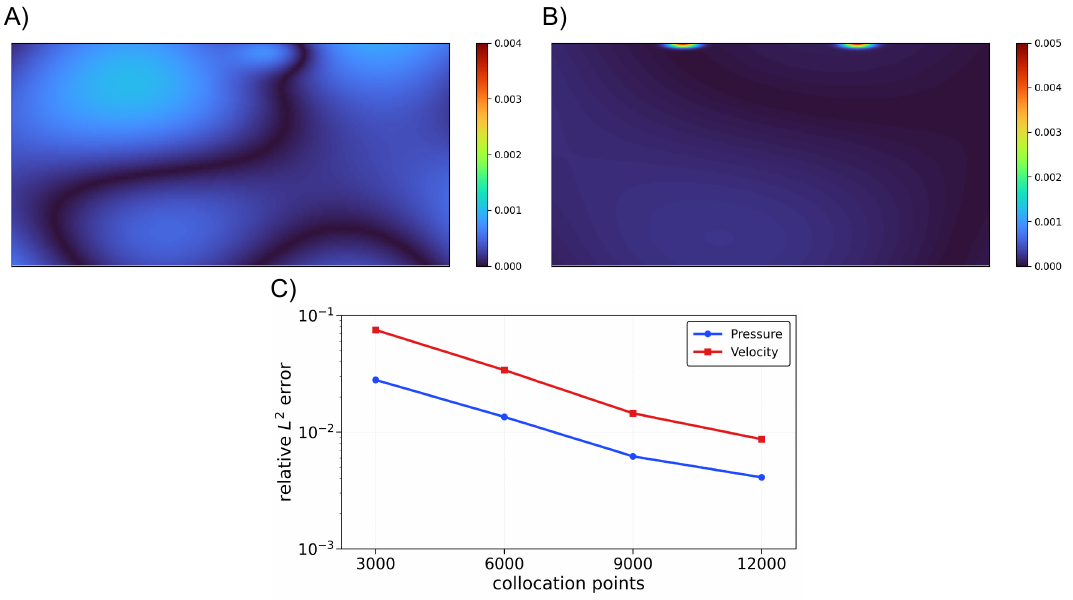}
    \caption{\textsf{Footing problem.} Comparison between the DeepLS predictions and the corresponding stabilized mixed finite element solutions: A) absolute error in the pressure field, B) absolute error in the velocity magnitude, and C) relative $L^2$ errors in pressure and velocity as functions of collocation density. The DeepLS results are computed using collocation densities ranging from 3000 to 12000 points, while the finite element reference solutions are obtained using the stabilized mixed formulations proposed in \cite{masud2002stabilized,nakshatrala2006stabilized}. The relative $L^2$ errors decrease systematically with increasing collocation density, indicating improved agreement with the finite element reference solution and demonstrating convergence of the proposed framework.}
    \label{Fig:Klinkenberg_Footing_problem_error_contours}
\end{figure}

%-----------------------------;
%  Figure 10: Loss variation  ;
%-----------------------------;
\begin{figure}
    \centering
    \includegraphics[width=0.75\linewidth]{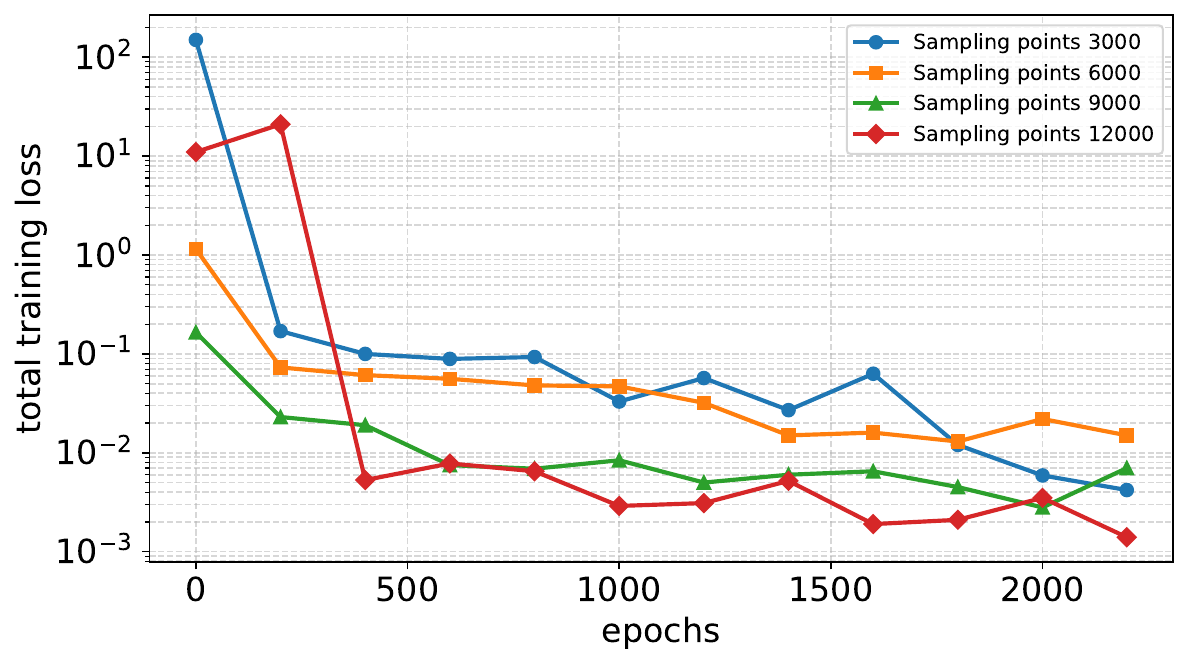}
    \caption{\textsf{Footing problem.} This figure shows the evolution of the total training loss over epochs for different collocation densities. Each curve corresponds to an independent optimization run initialized with 3,000, 6,000, 9,000, and 12,000 collocation points. Increasing the collocation density yields faster convergence and a lower terminal loss, indicating more stable optimization and improved solution fidelity (i.e., the learned model exhibits reduced residuals with respect to the governing equations). }
    \label{Fig:Klinkenberg_Footing_problem_loss_variation}
\end{figure}

%====================================================;
%  Subsection: Flow through a layered porous medium  ;
%====================================================;
\subsection{Flow through a layered porous medium} 
This benchmark problem tests solver performance under sharp coefficient discontinuities, the key numerical challenge in modeling stratified porous media. Specifically, the problem examines whether the formulation (i) correctly redistributes flow across high- and low-permeability layers under identical boundary conditions, (ii) resolves interface-localized gradients without spurious oscillations, and (iii) maintains pressure continuity and physically consistent normal flux transfer as permeability contrasts increase.

To illustrate this, we consider gas flow in a rectangular porous domain of length $L=5$ and height $H=4$, divided into five horizontal layers with piecewise-constant permeabilities, as shown in \textbf{Fig.~\ref{Fig:Klinkenberg_Layered_Media_BVP}}. A pressure difference is imposed between the left and right boundaries, while the top and bottom boundaries are impermeable, i.e., $\mathbf{u}(\mathbf{x})\bullet\widehat{\mathbf{n}}(\mathbf{x})=0$, material parameters and geometry details used in the simulation are summarized in Table~\ref{Tab:Klinkenberg_layered_porous_medium}. Because the layers are aligned with the coordinate axes, permeability changes occur only across horizontal interfaces, leading to layer-dependent pressure gradients and flow distribution.

%----------------------------------------;
%  Figure 11: layered porous medium BVP  ;
%----------------------------------------;
\begin{figure}[h]
    \centering
    \includegraphics[width=0.55\linewidth]{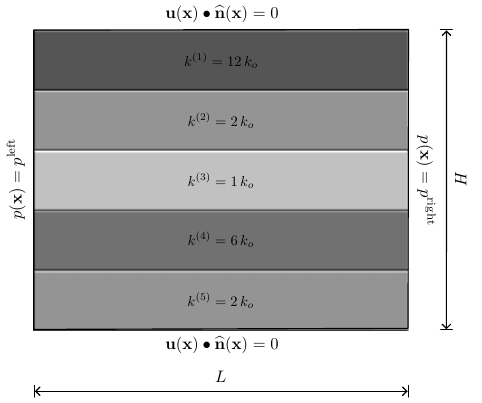}
    \caption{\textsf{Flow through a layered porous medium.} This figure illustrates gas flow in layered media. A rectangular domain $(L=5,  H=4)$ is composed of five horizontal layers, each with a distinct (piecewise constant) intrinsic permeability. A left-to-right pressure drop is enforced by prescribing $(p^{\textrm{left}}=10)$ on the inlet boundary and $(p^{\textrm{right}}=1)$ on the outlet boundary, while the top and bottom boundaries are impermeable $(\big(\mathbf{u}(\mathbf{x})\bullet \widehat{\mathbf{n}}(\mathbf{x})=0\big))$. This configuration is used to assess how pressure-dependent Klinkenberg slip modifies the pressure and velocity fields across permeability contrasts.}
    \label{Fig:Klinkenberg_Layered_Media_BVP}
\end{figure}

%---------------------------------------------;
%  Table 4: Material and geometry properties  ;
%---------------------------------------------;
\begin{table}[htbp]
    \centering
    \caption{\textsf{Flow through a layered porous medium.} Material and geometry parameters.  \label{Tab:Klinkenberg_layered_porous_medium}}
    \begin{tabular}{|c|c|}
        \hline
        \textbf{Parameter} & \textbf{Value} \\ \hline
        $L$         & 5 \\ 
        $H$         & 4 \\ 
        $\beta$     & 1 \\ 
        $\mu$       & 1 \\ 
        $\phi(\mathbf{x})\,\gamma\, \mathbf{b}(\mathbf{x})$  & $\mathbf{0} = [0, 0]$ \\ 
        $p^{\textrm{left}}$   & 10 \\
        $p^{\textrm{right}}$  & 1 \\
        \hline
    \end{tabular}
\end{table}
\textbf{Figure~\ref{Fig:kinkenberg_Solution_Profile}} demonstrates that the predicted pressure field remains smooth and continuous across material interfaces while the velocity field exhibits pronounced layer-wise contrasts that reflect permeability variations. \textbf{Figure~\ref{Fig:kinkenberg_horizontal_Velocity_Profile}} shows a  horizontal velocity profile $u_{x}$ evaluated at $(x = 2.5)$, revealing a distinct step-like structure that directly mirrors the underlying stratification. The sharp yet stable transitions between layers confirm that the method captures velocity profile without spurious oscillations, even in the presence of strong variations in permeability. These results were obtained using a neural network with eight hidden layers and 128 neurons per layer, trained with the ReLU activation function; the total training time on an NVIDIA T4 GPU was 8.38 minutes.

%---------------------------------------------;
%  Figure 12: layered porous medium solution  ;
%---------------------------------------------;
\begin{figure}[h]
    \centering
    \includegraphics[width=0.85\linewidth]{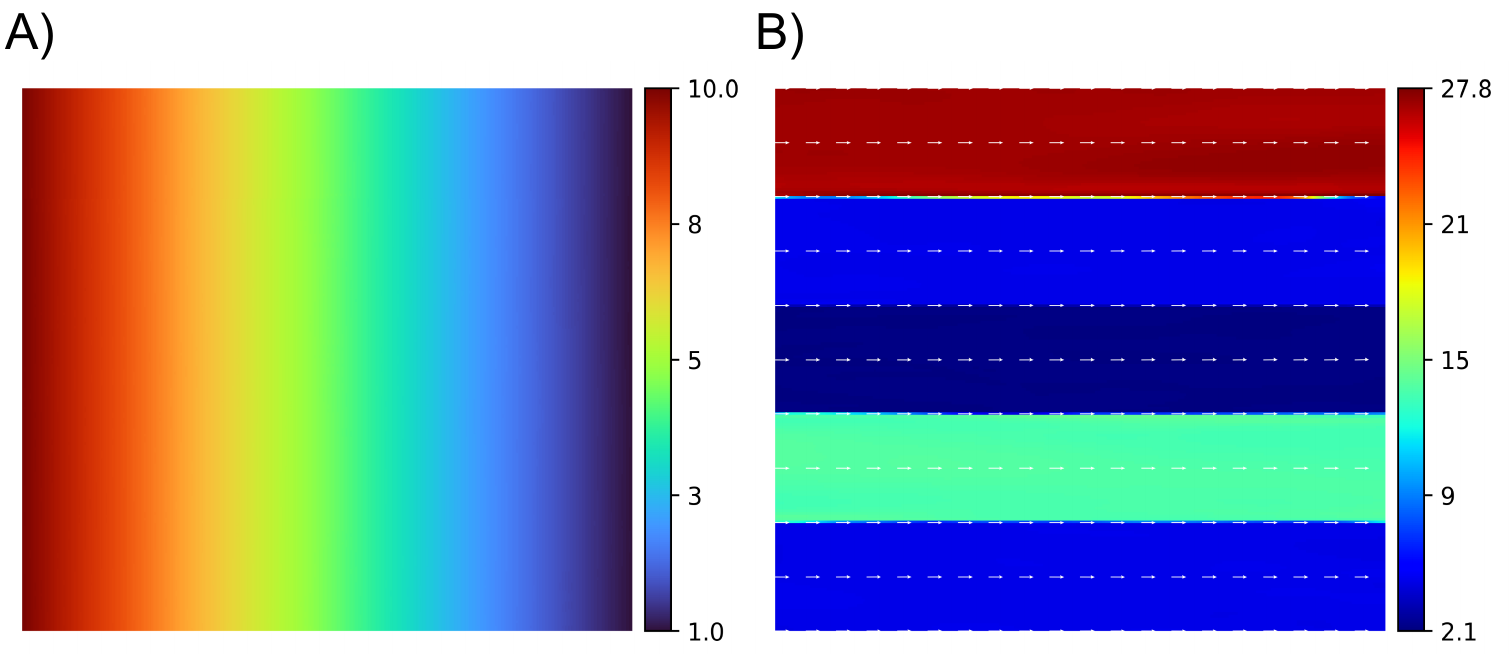}
    \caption{\textsf{Flow through a layered porous medium.} This figure showcases the DeepLS results for Klinkenberg gas flow in a layered medium, highlighting interface-resolving behavior. (A) Predicted pressure field exhibits a smooth, essentially one-dimensional left-to-right drop, remaining continuous across material interfaces as required by the governing flow model. (B) Velocity field (magnitude contours with direction vectors) shows strong layer-to-layer contrasts, with flow accelerating in more conductive layers and decelerating in less conductive layers; the vectors remain predominantly aligned with the imposed pressure gradient.}
    \label{Fig:kinkenberg_Solution_Profile}
\end{figure}

%-----------------------------------------------;
%  Figure 13: layered porous medium ux profile  ;
%-----------------------------------------------;
\begin{figure}
    \centering
    \includegraphics[width=0.6\linewidth]{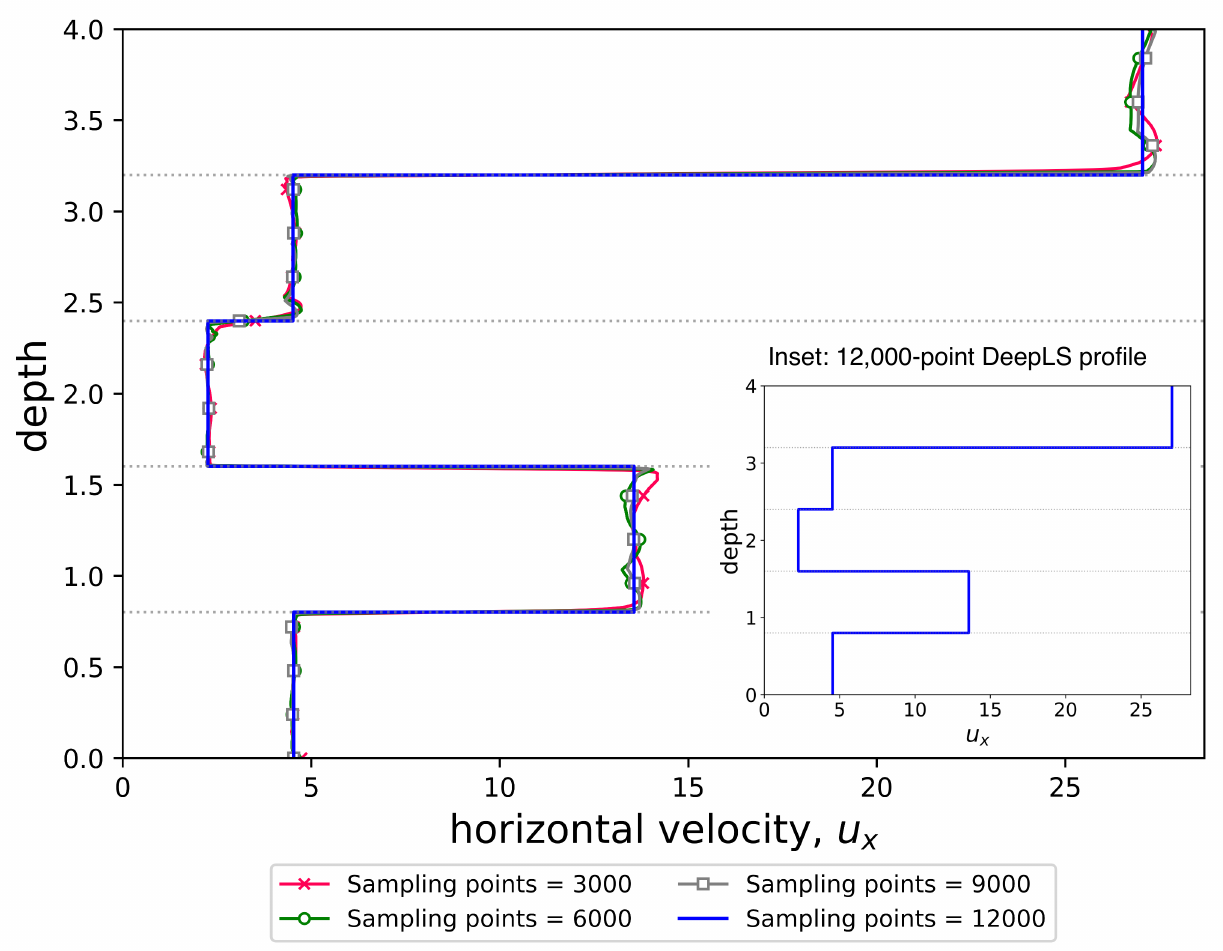}
    \caption{\textsf{Flow through a layered porous medium.} 
    This figure illustrates the depthwise profile of the horizontal velocity, $u_x$, obtained from the DeepLS solution for different collocation point densities: 3,000, 6,000, 9,000, and 12,000 points. The results exhibit a distinct step-like structure that closely reflects the underlying layered architecture, indicating that the method captures sharp interfacial variations in flux. Increasing the collocation density improves the resolution of these sharp transitions, particularly near material interfaces. The inset highlights the 12,000-collocation-point solution, which provides the most refined representation of the layered velocity profile. Such features would typically require specialized treatments---such as Discontinuous Galerkin formulations in finite element methods---to properly resolve flux discontinuities across material interfaces \citep{joshaghani2019stabilized}.}
    \label{Fig:kinkenberg_horizontal_Velocity_Profile}
\end{figure}

%===============================================;
%  Subsection: Flow through concentric spheres  ;
%===============================================;
\subsection{Gas flow through concentric spheres}
The motivation for studying this configuration is twofold. \emph{Engineering-wise}, concentric spherical domains are common in gas-transport applications involving layered or encapsulated media, such as subsurface migration around cavities and pressure-driven flow through spherical pellets, membranes, and porous shells \citep{TsangNeretnieks1998, PruessOldenburgMoridis2012}. \emph{Numerically}, the problem provides a stringent benchmark for evaluating the robustness of the proposed DeepLS framework in capturing three-dimensional radial symmetry and handling curved geometries.

To increase the numerical challenge, only a half-domain is modeled and symmetry boundary conditions are imposed, leading to a zero-velocity constraint on the symmetry plane (with quarter-domain reduction also possible). This results in mixed boundary conditions. In conventional finite element methods, enforcing such conditions---especially no-penetration constraints on nonconforming planar boundaries---typically requires Nitsche-type weak enforcement \citep{joodat2018modeling}. By contrast, DeepLS embeds these constraints directly into the loss functional, enabling accurate pressure and flux predictions without additional stabilization, which will be demonstrated using this numerical example.

We consider gas flow in a concentric spherical domain bounded by an inner sphere of radius $r_i$ and an outer sphere of radius $r_o$. The inner and outer spherical surfaces are prescribed with pressures $p_{i}$ and  $p_{o}$ while a no-flux boundary condition is imposed on the planar surface $z = r_o$ as illustrated in \textbf{Fig.~\ref{fig:3d_Problem_Kinkenberg}}.  Material properties and geometric parameters used in the simulations are summarized in Table~\ref{Tab:klinkenberg_2D_concentric_cylinders}.

%------------------------------------;
%  Figure 14: Concentric spheres BVP ;
%------------------------------------;
\begin{figure}
    \centering
    \includegraphics[width=0.45\linewidth]{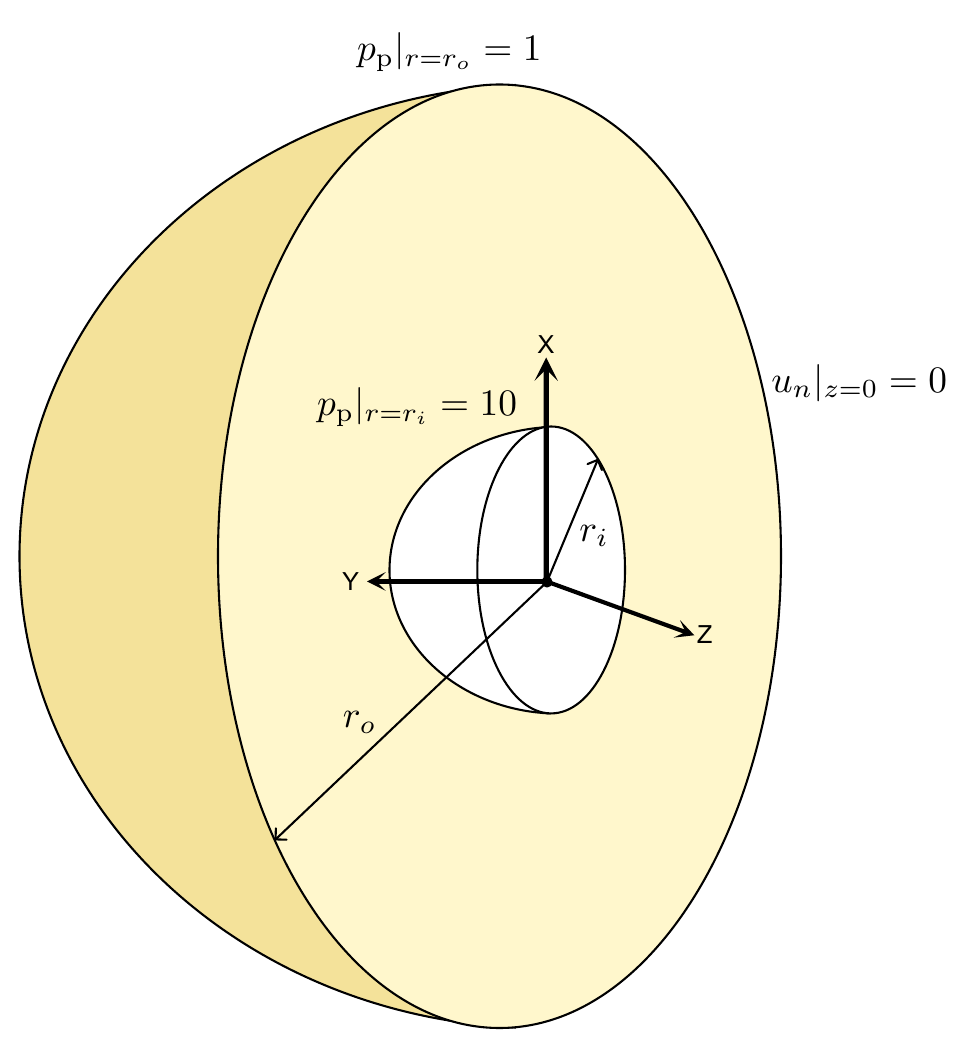}
    \caption{\textsf{Gas flow through concentric spheres.} This figure illustrates the computational geometry and boundary conditions. The domain is defined by an inner sphere of radius \(r_i\) and an outer sphere of radius \(r_o\). Prescribed pressure boundary conditions are imposed on the inner and outer spherical surfaces, while a no-normal-flux condition is enforced on the planar surface defined by \(z = 0\).} 
    \label{fig:3d_Problem_Kinkenberg}
\end{figure}

This boundary-value problem admits an analytical solution of the following form: 
\begin{align}
    & p(r) =\beta\,p_{\mathrm{\text{atm}}}\,W\!\left(
    \frac{1}{\beta\,p_{\mathrm{\text{atm}}}}
    \exp\!\left(
    \frac{\mathcal{P}(r)
    }{\beta\,p_{\mathrm{\text{atm}}}}
    \right)
    \right) \\
    & u_r(r) = \frac{k_{0}}{\mu}\,\left((p_i-p_o) \,+ \, \beta \, p_{\text{atm}} \, \text{ln}\left(\frac{p_i}{p_o}\right)\right) \frac{r_{i}r_{o}}{r_{0} - r_{i}} \frac{1}{r^{2}}
\end{align}
where $\mathcal{P}(r)$ is given by:
\begin{align}
     \mathcal{P}(r) = \mathcal{P}_{o} + (\mathcal{P}_{i} - \mathcal{P}_{o}) \frac{r_{i}r_{o}}{r_{o} - r_{i}}\left(\frac{1}{r} - \frac{1}{r_{o}} \right)
\end{align}

\textbf{Figure~\ref{fig:klinkenberg_3D_solution_Profiles}} shows that the DeepLS model produces a radially structured solution on the concentric-sphere geometry: the pressure varies smoothly between the prescribed inner and outer values, while the velocity magnitude is largest near the inner sphere and decreases with radius, consistent with geometric spreading in spherical flow. \textbf{Figure~\ref{fig:klinkenberg_3d_analytical_vs_Numerical}} confirms this behavior through comparison with the closed-form solution, showing near-perfect agreement between the predicted and analytical radial pressure $p(r)$ and velocity $u_r(r)$ profiles. This close agreement demonstrates accurate recovery of both the pressure field and the associated radial flux.

%-----------------------------------------;
%  Figure 15: Concentric spheres Solution ;
%-----------------------------------------;
\begin{figure}
    \centering
    \includegraphics[width=0.75\linewidth]{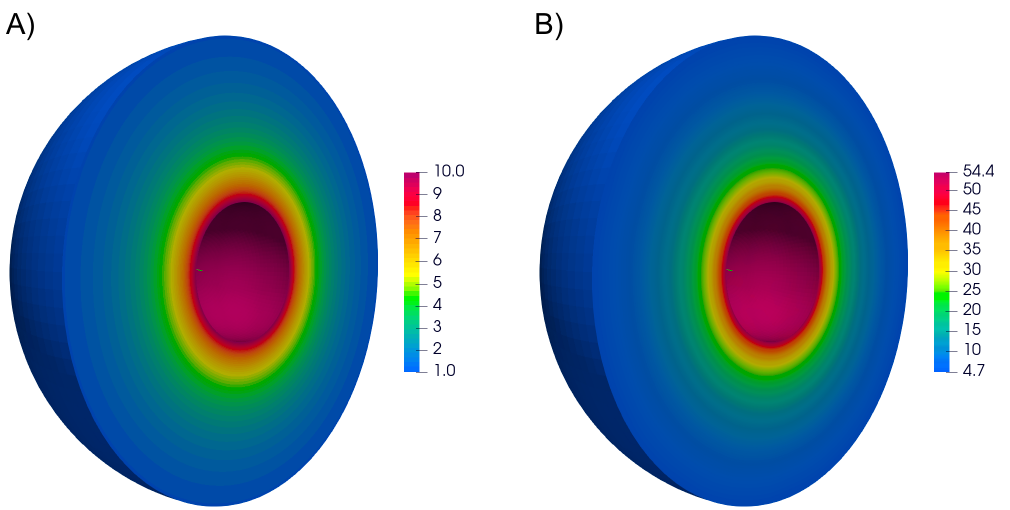}
    \caption{\textsf{Gas flow through concentric spheres.} (A) shows the predicted pressure field within the spherical shell, illustrating the smooth pressure decay from the inner to the outer spherical boundary and accurate enforcement of the prescribed pressure conditions. (B) shows the magnitude of the velocity, highlighting the radially varying flow intensity and the expected increase in velocity near the inner sphere. The results demonstrate the ability of the DeepLS framework to resolve both pressure and velocity fields in a fully three-dimensional curved geometry without mesh-based discretization.}
    \label{fig:klinkenberg_3D_solution_Profiles}
\end{figure}

%-----------------------------------------------------;
%  Figure 16: Concentric spheres analytical solution  ;
%-----------------------------------------------------;
\begin{figure}
    \centering
    \includegraphics[width=1.0\linewidth]{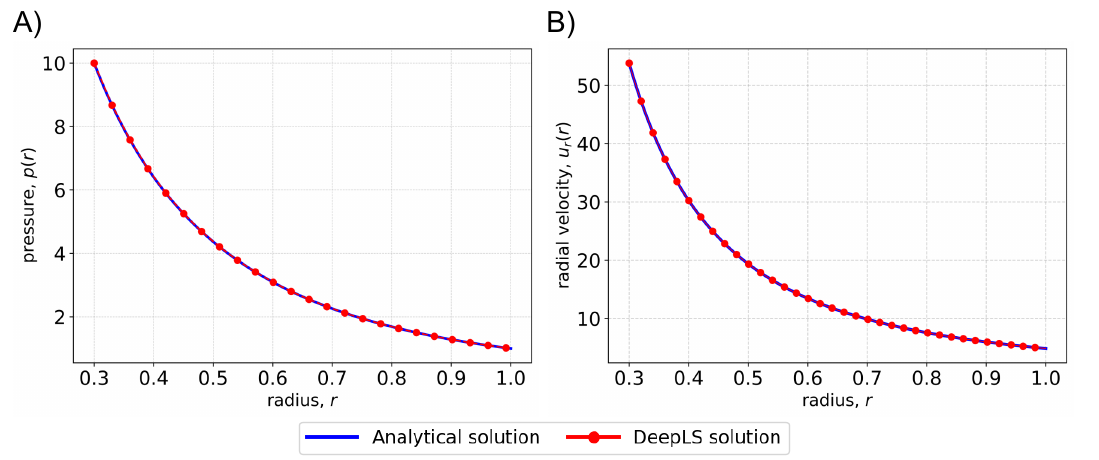}
    \caption{\textsf{Gas flow through concentric spheres.} This figure illustrates comparison between analytical solution with the DeepLS prediction. (A) Radial pressure $p(r)$ and (B) radial Darcy velocity $u_r(r)$. The DeepLS prediction overlap the analytical solution profiles confirming accurate recovery of both fields.}
    \label{fig:klinkenberg_3d_analytical_vs_Numerical}
\end{figure}

Notably, for this boundary value problem, the proposed DeepLS framework outperforms standard finite element workflows in terms of formulation and enforcement: it achieves this level of accuracy without mesh generation or auxiliary weak-enforcement techniques (e.g., Nitsche-type methods \citep{schillinger2016non, joodat2018modeling}) commonly required in conventional approaches. \textbf{Figure~\ref{fig:klinkenberg_3d_L2_error}} presents the $L_2(\Omega)$ errors as the network width increases for several fixed depths. Errors in both pressure and velocity decrease with increasing width before saturating, while, for a fixed width, deeper networks consistently yield lower errors, highlighting the accuracy gains enabled by increased network depth. These results were obtained using a neural network with eight hidden layers and 128 neurons per layer, trained with the ReLU activation function; the total training time on an NVIDIA T4 GPU was 8.26 minutes.

%-----------------------------------------;
%  Figure 17: Concentric spheres L2 Error ;
%-----------------------------------------;
\begin{figure}
    \centering
    \includegraphics[width=0.95\linewidth]{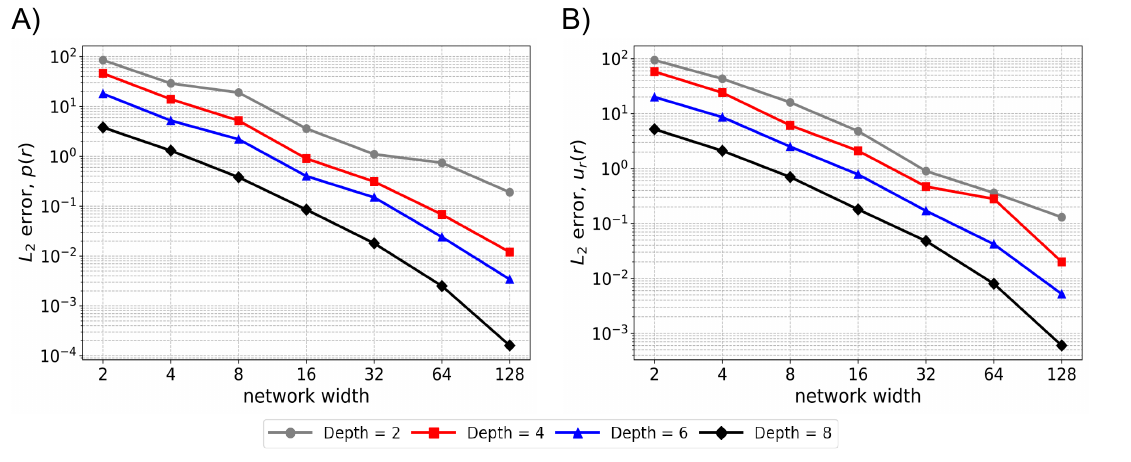}
    \caption{\textsf{Gas flow through concentric spheres.} This figure shows $L_2(\Omega)$ errors of the DeepLS predictions as the network width is increased for several fixed depths. (A) $L_2(\Omega)$ error in the predicted pressure field, $\|p-p_{\mathrm{ref}}\|_{L_2(\Omega)}$. (B) $L_2(\Omega)$ error in the predicted velocity field, $\|\mathbf{u}-\mathbf{u}_{\mathrm{ref}}\|_{L_2(\Omega)}$. The reference fields $p_{\mathrm{ref}}$ and $\mathbf{u}_{\mathrm{ref}}$ correspond to the analytical solution. As network width increases, the $L_2(\Omega)$ errors in both pressure and velocity decrease and then level off, indicating a diminishing-returns regime once sufficient capacity is reached. For a fixed width, deeper networks consistently yield smaller errors, showing that depth improves approximation quality.}
    \label{fig:klinkenberg_3d_L2_error}
\end{figure} 

    \section{MECHANICS-BASED VERIFICATION}
\label{Sec:S6_Klinkenberg_Betti}

Recently, \cite{maduri2025flow} established a reciprocal relation for a nonlinear porous‐media flow model with pressure‐dependent viscosity. Building on this work, we employ the Hopf–Cole transformation to recover reciprocity for the Klinkenberg model, yielding a consistent framework for evaluating reciprocity residuals across different network architectures. Within this framework, we use a Betti-type reciprocity relation as a mechanics-based \emph{a posteriori} error indicator and examine its sensitivity to the depth and width of the neural network architecture.

Betti’s reciprocal theorem for the Klinkenberg model may thus be stated as follows. Let $\big(p^{(1)}(\mathbf{x}), \mathbf{u}^{(1)}(\mathbf{x})\big)$ and $\big(p^{(2)}(\mathbf{x}), \mathbf{u}^{(2)}(\mathbf{x})\big)$ denote the solutions associated with two distinct sets of boundary data, $\big(p_{\mathrm{p}}^{(1)}(\mathbf{x}), u_n^{(1)}(\mathbf{x})\big)$ and $\big(p_{\mathrm{p}}^{(2)}(\mathbf{x}), u_n^{(2)}(\mathbf{x})\big)$, respectively. These fields satisfy the following identity: 
%-----------------------------;
%  Equation: Betti's theorem  ;
%-----------------------------;
\begin{align}
    \label{Eqn:Hopf_Cole_Betti_Theorem_Kinkenberg}
    &\int_{\Gamma_{u}} u_{n}^{(2)}(\mathbf{x}) 
    \, \left(p^{(1)}(\mathbf{x}) + \beta \, p_{\mathrm{atm}} 
    \ln\big[p^{(1)}(\mathbf{x})\big] \right)\, \mathrm{d} \Gamma 
    \nonumber \\ 
    %%%
    &\hspace{0.5in} -\int_{\Gamma_{p}}
    \left(p^{(2)}_{\mathrm{p}}(\mathbf{x}) + \beta \, p_{\mathrm{atm}} 
    \ln\big[p^{(2)}_{\mathrm{p}}(\mathbf{x})\big] \right)\, \mathbf{u}^{(1)}(\mathbf{x})
    \bullet \widehat{\mathbf{n}}(\mathbf{x}) 
    \, \mathrm{d} \Gamma \nonumber \\ 
    %%%
    &\hspace{1in} =
    \int_{\Gamma_{u}}
    u^{(1)}_{n}(\mathbf{x}) \, \left(p^{(2)}(\mathbf{x}) + \beta \, p_{\mathrm{atm}} 
    \ln\big[p^{(2)}(\mathbf{x})\big] \right)\, 
    \mathrm{d} \Gamma \nonumber \\
    %%%
    &\hspace{1.5in} -\int_{\Gamma_{p}}
    \left(p^{(1)}_{\mathrm{p}}(\mathbf{x}) + \beta \, p_{\mathrm{atm}} 
    \ln\big[p^{(1)}_{\mathrm{p}}(\mathbf{x})\big] \right)\,
    \mathbf{u}^{(2)}(\mathbf{x}) 
    \bullet \widehat{\mathbf{n}}(\mathbf{x}) 
    \, \mathrm{d} \Gamma 
\end{align}

%================================================;
%  Equation: Application to the footing problem  ;
%================================================;
\subsection{Application to the footing problem.}
We apply this identity to the footing problem described in Subsection~\ref{Subsec:Kinkenberg_Footing_problem}. This problem is particularly well suited for the present study because it does not admit a closed-form analytical solution, thereby providing a meaningful setting for mechanics-based verification. For both solution states, the normal component of the velocity vanishes on $\Gamma_u$:
\begin{align}
    \label{Eqn:NoFlow_On_Gamma_u_BothStates}
    u_{n}^{(1)}(\mathbf{x}) = 0
    \quad \mathrm{and} \quad 
    u_{n}^{(2)}(\mathbf{x}) = 0
    \qquad \forall\,\mathbf{x}\in\Gamma_{u}
\end{align}
As a consequence, the integrals over $\Gamma_u$ in
Eq.~\eqref{Eqn:Hopf_Cole_Betti_Theorem_Kinkenberg} vanish identically, and the
reciprocal relation reduces to
\begin{align}
    \label{Eqn:Betti_Specialized_Gamma_p}
    \mathcal{R}_{\mathrm{B}} := \int_{\Gamma_{p}}
    &\left(p^{(2)}_{\mathrm{p}}(\mathbf{x}) + \beta \, p_{\mathrm{atm}} 
    \ln\big[p^{(2)}_{\mathrm{p}}(\mathbf{x})\big] \right)\,
    \mathbf{u}^{(1)}(\mathbf{x})\bullet \widehat{\mathbf{n}}(\mathbf{x}) \, \mathrm{d} \Gamma 
    \nonumber\\
    %%%
    &\hspace{0.5in} -
    \int_{\Gamma_{p}}
    \left(p^{(1)}_{\mathrm{p}}(\mathbf{x}) + \beta \, p_{\mathrm{atm}} 
    \ln\big[p^{(1)}_{\mathrm{p}}(\mathbf{x})\big] \right)\,
    \mathbf{u}^{(2)}(\mathbf{x})\bullet \widehat{\mathbf{n}}(\mathbf{x}) 
    \, \mathrm{d} \Gamma 
    = 0 
\end{align}

Given that for the exact solution fields, the residual satisfies
\(\mathcal{R}_{\mathrm{B}} = 0\), the magnitude \(|\mathcal{R}_{\mathrm{B}}|\) provides an \emph{a posteriori}
verification metric for assessing numerical accuracy. To enable meaningful
comparisons across different neural network architectures, we define the
normalized reciprocity error as follows:
\begin{align}
    \label{Eqn:Betti_Normalized_Error}
    \eta_{\mathrm{B}}
    &:= \frac{\left|\mathcal{I}_{12}-\mathcal{I}_{21}\right|}
    {\left|\mathcal{I}_{12}\right| +\left|\mathcal{I}_{21}\right|} 
    = \frac{\left|\mathcal{R}_{\mathrm{B}}\right|}
    {\left|\mathcal{I}_{12}\right| +\left|\mathcal{I}_{21}\right|}
\end{align}
where 
\begin{align}
    \mathcal{I}_{12}
    &:=
    \int_{\Gamma_{p}}
    \left(p^{(2)}_{\mathrm{p}}(\mathbf{x}) + \beta \, p_{\mathrm{atm}}
    \ln\big[p^{(2)}_{\mathrm{p}}(\mathbf{x})\big]\right)\,
    \mathbf{u}^{(1)}(\mathbf{x}) \bullet \widehat{\mathbf{n}}(\mathbf{x})
    \,\mathrm{d}\Gamma \\
    %%%
    \mathcal{I}_{21}
    &:=
    \int_{\Gamma_{p}}
    \left(p^{(1)}_{\mathrm{p}}(\mathbf{x}) + \beta \, p_{\mathrm{atm}}
    \ln\big[p^{(1)}_{\mathrm{p}}(\mathbf{x})\big]\right)\,
    \mathbf{u}^{(2)}(\mathbf{x}) \bullet \widehat{\mathbf{n}}(\mathbf{x})
    \,\mathrm{d}\Gamma 
    %%%
\end{align}
The normalized quantity \(\eta_{\mathrm{B}}\) is evaluated for networks of varying width and depth and reported as a function of architectural complexity, thereby quantifying the influence of representational capacity on the degree to which the reciprocity condition is satisfied.

\textbf{Figure~\ref{Fig:Klinkenberg_Footing_problem_Bettis_error}} shows that the normalized reciprocity error decreases with increasing network width for all considered depths, but plateaus beyond a moderate width, indicating diminishing returns from further widening. In contrast, increasing network depth leads to
consistently lower reciprocity errors at comparable widths. These observations suggest that, for a fixed parameter budget, increasing depth is a more efficient
strategy than increasing width for improving satisfaction of the reciprocity condition.

%------------------------------------------;
%  Figure 18: Concentric spheres L2 Error  ;
%------------------------------------------;
\begin{figure}
    \centering
    \includegraphics[width=0.75\linewidth]{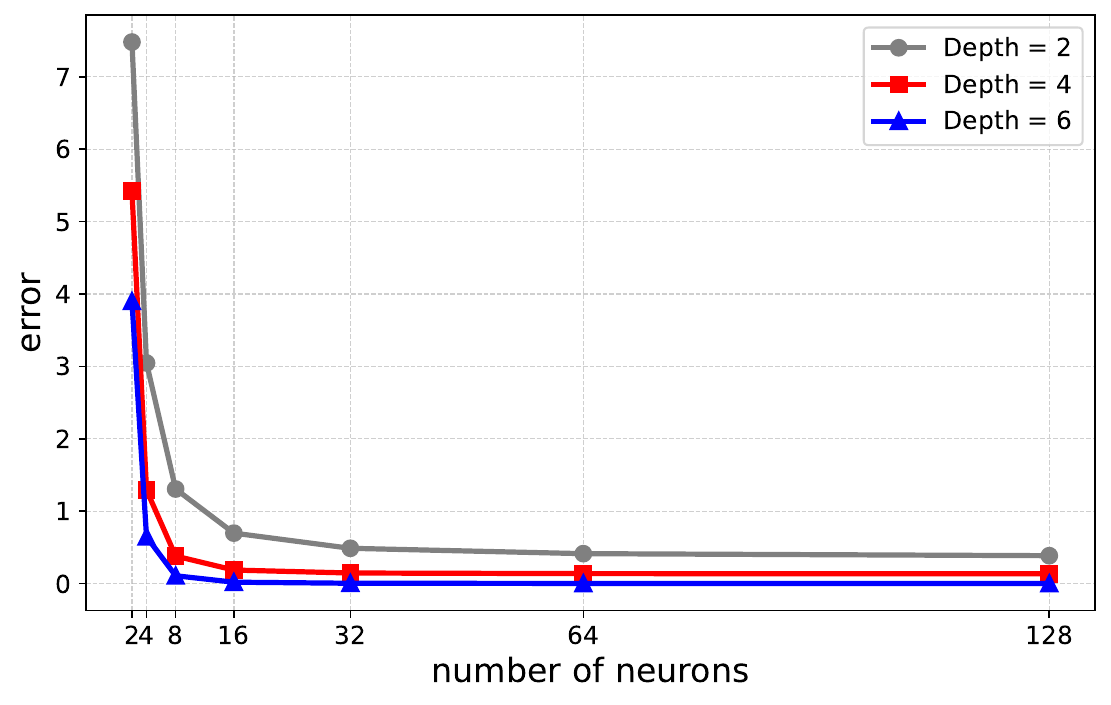}
    \caption{\textsf{Betti relation.} This figure shows normalized reciprocity error against the number of neurons per hidden layer for multiple network depths. All simulations are performed for fixed interior and boundary point sets, ensuring that variations in the reciprocity error are attributable to changes in network representational capacity. Increasing width generally reduces reciprocity violation, while deeper architectures attain smaller errors at comparable widths.}
    \label{Fig:Klinkenberg_Footing_problem_Bettis_error}
\end{figure}
    
    %*********************************************;
%                                             ;
%  NAME                                       ;
%    S7_Klinkenberg_Closure.tex               ;
%                                             ;
%*********************************************;
\section{CLOSURE}
\label{Sec:S7_Klinkenberg_Closure}

This work developed a stable and mathematically well-posed framework that integrates deep learning with continuum modeling to simulate gas flow through porous media in regimes where gas slippage leads to deviations from classical Darcy behavior due to the pressure dependence of the apparent gas permeability. By extending the Hopf--Cole transformation to the Klinkenberg permeability model, the nonlinear governing equations are recast into an equivalent linear Darcy-type system, yielding improved analytical tractability and numerical conditioning. The resulting formulation admits a least-squares representation with a symmetric and positive-definite operator, enabling robust and stable solution strategies. A mixed pressure--velocity formulation implemented through a shared-trunk neural architecture ensures physically consistent prediction of both fields, while a rigorous convergence analysis establishes the theoretical soundness of the proposed DeepLS-based approach. Overall, the integration of transformed continuum models with physics-informed learning provides a mesh-free and interpretable alternative to pore-scale simulation for non-Darcy gas transport in tight and low-pressure formations.

Some of the main contributions of this work are summarized as follows:
\begin{enumerate}
    \item[(C1)] \textbf{Accuracy:} Numerical solutions obtained with the framework are in close agreement with corresponding finite element solutions, with convergence behavior demonstrated on canonical benchmark problems.
    \item[(C2)] \textbf{Mathematical analysis:} A rigorous convergence analysis is developed, establishing the stability and reliability of the method.
    \item[(C3)] \textbf{Computational considerations:} The computational effort of the proposed framework is characterized through implementation-specific time-to-solution measurements for the benchmark problems, while a systematic cost comparison with FEM and PINN solvers is left for future work.
    \item[(C4)] \textbf{Enhanced modeling capability:} The framework accurately resolves velocity fields in porous media with strong heterogeneity, including layered domains with highly disparate permeabilities.
\end{enumerate}

%=========================================================;
%  Subsection: Scope, limitations, and future directions  ;
%=========================================================;
\subsection*{Scope, limitations, and future directions}
We clarify the scope of the computational-cost claims made in this study. The reported training times provide implementation-specific time-to-solution information for the proposed DeepLS framework; they are not intended to constitute a controlled computational-cost comparison with FEM or PINN solvers. A systematic comparison would require careful control of hardware, implementation details, solver choices, mesh resolution, collocation density, network architecture, optimizer settings, stopping criteria, and target accuracy. Accordingly, the present results should be interpreted as demonstrating the feasibility, accuracy, and computational profile of the proposed implementation on representative benchmark problems, rather than as establishing uniform computational superiority over existing numerical methods.

The benchmark problems considered in this work were designed to isolate key features of single-phase Klinkenberg gas flow, including pressure-dependent permeability, mixed pressure--velocity prediction, and sharp interfacial variations. While these examples demonstrate the accuracy and stability of the proposed framework in representative settings, further developments are needed to extend the approach to more complex porous-media applications involving stronger heterogeneity, multiphysics coupling, and field-scale complexity.

Beyond the specific Klinkenberg model considered here, the methodology provides a general pathway for combining analytical transformations, mixed formulations, and least-squares learning to address nonlinear and multiscale transport problems. Natural directions for future work include:
\begin{enumerate}
  \item[(i)] extending the framework to multiphase flow systems, where additional nonlinearities such as saturation fronts, nonlinear relative permeability and capillary pressure relations, hysteresis, and phase interactions further complicate the modeling process;
  \item[(ii)] incorporating multiscale representations and domain-decomposition strategies to improve scalability for strongly heterogeneous media with localized features and sharp coefficient variations;
  \item[(iii)] integrating uncertainty quantification techniques within the neural network architecture to enable more informed decision-making in reservoir engineering and risk assessment; and
  \item[(iv)] investigating scalable implementations on high-performance computing platforms and coupling with established reservoir-simulation workflows to facilitate large-scale simulations in industrial and field-scale settings.
\end{enumerate} 

    %==============================;
    %  Include all the appendices  ;
    %==============================;
    \appendix 
    %================================================;
%  Section: The real-valued Lambert--W function  ;
%================================================;
\section{The real-valued Lambert--W function}
\label{Sec:Klinkenberg_real_valued_Lambert_W_function}

The Lambert--$W$ function, also known as the product logarithm, is defined as the inverse of the real-valued mapping
\begin{equation}
    w \mapsto w e^{w}
\end{equation}
For a real argument \( x \), the Lambert--$W$ function is defined implicitly by the relation
\begin{equation}
    W(x)\,e^{W(x)} = x
\end{equation}

Real-valued branches of the Lambert--$W$ function exist only for \( x \geq -1/e \). On this domain, the function admits one or two real branches depending on the value of \( x \). The principal branch, denoted by \( W_0(x) \), is defined for \( x \geq -1/e \) and satisfies \( W_0(x) \geq -1 \). The secondary branch, denoted by \( W_{-1}(x) \), is defined on the interval \( -1/e \leq x < 0 \) and satisfies \( W_{-1}(x) \leq -1 \). These two branches meet at the branch point \( x = -1/e \), where
\begin{equation}
    W_0(-1/e) = W_{-1}(-1/e) = -1
\end{equation}
\textbf{Figure~\ref{Fig:Klinkenberg_LambertW_function}} illustrates the two real branches of the Lambert--$W$ function.

The Lambert--$W$ function is particularly useful for solving real transcendental equations in which the unknown variable appears both algebraically and exponentially. For example, the equation
\begin{equation}
    x e^{x} = a
\end{equation}
admits the explicit solution
\begin{equation}
    x = W(a)
\end{equation}
provided that \( a \geq -1/e \). This property is exploited in the present work through the inverse Hopf--Cole transformation (see \S\ref{Subsec:Klinkenberg_Inverse_transformation}). For a comprehensive discussion of the real-valued Lambert--$W$ function and its applications, see \cite{Corless1996LambertW} and \cite{olver2010nist}.

%------------------------------;
%  Figure: Lambert W function  ;
%------------------------------;
\begin{figure}
    \includegraphics[scale=0.7]{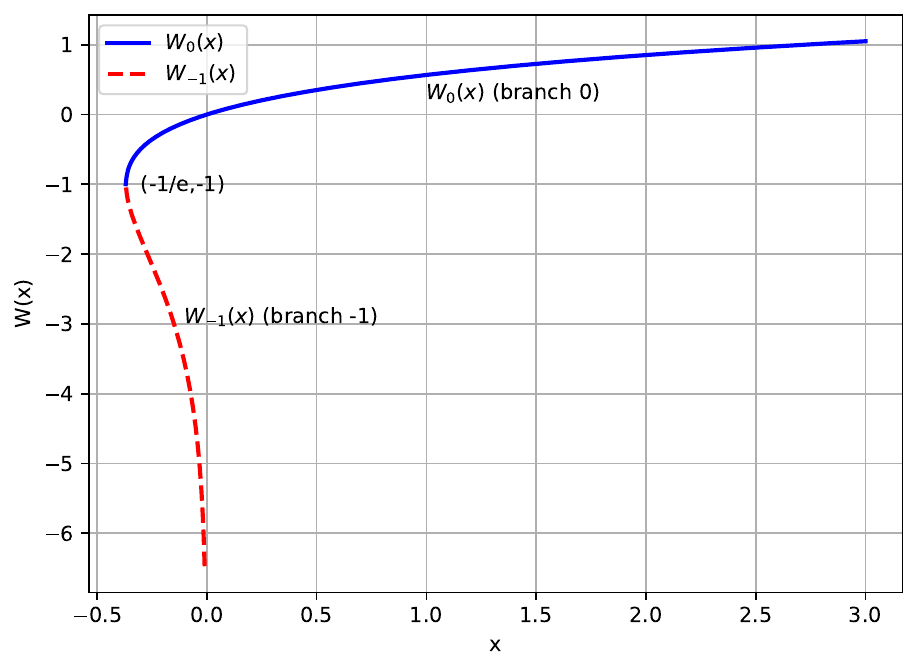}
    \caption{Graph of the Lambert--$W$ function showing both branches. In this work, only the principal branch $W_0(x)$ (branch~0) is relevant, since the pressure field to which the Lambert--$W$ transformation is applied is strictly positive. For $x>0$, the mapping is bijective. For $x\in[-e^{-1},0)$, the function admits two real branches, $W_0(x)$ and $W_{-1}(x)$, and is therefore not single-valued. Of course, the function is undefined for $x < -e^{-1}$. \label{Fig:Klinkenberg_LambertW_function}}
\end{figure}

    \vspace{0.2in}
    
    %=====================;
    %  Data availability  ;
    %=====================;
    \section*{DATA AND CODE AVAILABILITY STATEMENT}
    All data generated in this study are synthetic and reproducible using the publicly available source code, which will be released upon publication at \url{https://github.com/CAMLKBN/DeepLSCodes}. Additional supporting data are available from the corresponding author upon reasonable request.

    \vspace{0.1in}
    
    %=======================;
    %  Funding declaration  ;
    %=======================;
    \noindent\textbf{Funding Declaration.} The authors acknowledge the support from the Environmental Molecular Sciences Laboratory (EMSL), a DOE Office of Science User Facility sponsored by the Biological and Environmental Research program under contract no: DE-AC05-76RL01830 (Large-Scale Research User Project No: 60720, Award DOI: \url{10.46936/lser.proj.2023.60720/60008914}). The views and opinions of authors expressed herein do not necessarily state or reflect those of the United States Government or any agency thereof.

     \vspace{0.1in}
     \noindent\textbf{Declaration of Competing Interests.} The authors declare that they have no known competing financial interests or personal relationships that could have appeared to influence the work reported in this paper.

    %============;
    %  AI tools  ;
    %============;
    \noindent\textbf{Use of Generative AI Statement.}
    Figure 1 (concept figure) was partially generated with the assistance of ChatGPT (e.g., subfigures illustrating the reservoir, velocity slip, and optimization contours) and subsequently reviewed and refined by the authors to ensure accuracy and consistency with the scientific content. No AI tools were used to generate, modify, or interpret any scientific data.
    
    %================;
    %  Bibliography  ;
    %================;
    \bibliographystyle{plainnat}
    \bibliography{Master_References}
    %%%
\end{document}